\documentclass[11pt]{amsart}
\pdfoutput=1 
\textwidth 6.5in % for proof reading
\oddsidemargin 0in % for proof reading
\evensidemargin 0in % for proof reading
\setlength{\textheight}{8.5in} % for proof reading
\addtolength{\headheight}{3.2pt} % for proof reading
 % for proof reading
\allowdisplaybreaks % for proof reading
\usepackage{caption}

\usepackage{hyperref}
\usepackage{amsmath,amsfonts,amssymb,verbatim}
\usepackage{graphicx}
\usepackage[english,italian]{babel}
\usepackage[justification=centering]{caption}% or e.g. [format=hang]
\usepackage{color}

\usepackage{amsthm,amsxtra,color,enumerate,datetime,xcolor,}
\usepackage{imakeidx}
\usepackage[normalem]{ulem}
\usepackage{cancel}

\newtheorem{theorem}{Theorem}[section]
\newtheorem{corollary}[theorem]{Corollary}
\newtheorem{proposition}[theorem]{Proposition}
\newtheorem{lemma}[theorem]{Lemma}
\newtheorem{definition}[theorem]{Definition}
\newtheorem{remark}[theorem]{Remark}

%%%%%%%%%

\numberwithin{equation}{section}
%%%%%%%%%%

\def \bC {\mathbb C}
\def \bD {\mathbb D}

\def \bH {\mathbb H}

\def \bN {\mathbb N}

\def \bR {\mathbb R}
\def \bS {\mathbb S}
\def \bR {\mathbb R}
\def \bR {\mathbb R}

\def \bZ {\mathbb Z}

\def \cC {\mathcal C}
\def \cD {\mathcal D}

\def \cF {\mathcal F}
\def \cG {\mathcal G}
\def \cH {\mathcal H}

\def \cO {\mathcal O}
\def \cP {\mathcal P}
\def \cQ {\mathcal Q}
\def \cR {\mathcal R}
\def \cS {\mathcal S}
\def \cR {\mathcal R}
\def \cR {\mathcal R}

\def \fh {\mathfrak h}

\def \fk {\mathfrak k}

\def \fn {\mathfrak n}
\def \fo {\mathfrak o}

\def \fr {\mathfrak r}
\def \fs {\mathfrak s}
\def \ft {\mathfrak t}
\def \fu {\mathfrak u}
\def \fv {\mathfrak v}
\def \fw {\mathfrak w}

\def \fz {\mathfrak z}

\def \fS {\mathfrak S}

\def \fU {\mathfrak U}

\def \fX {\mathfrak X}

\def \IM {\text{\rm Im}\,}
\def \al {\alpha}
\def \la {\lambda}
\def \ph {\varphi}
\def \del {\delta}

\def \lan {\langle}
\def \ran {\rangle}
\def \de {\partial}
\def \trans{\,{}^t\!}
\def \half{\frac12}
\def \inv{^{-1}}

\def \supp {\text{\rm supp\,}}

\def \dim {\text{\rm dim\,}}

\def \tr {\text{\rm tr\,}}
\def \id {\text{\rm id\,}}

\newcommand{\stkout}[1]{\ifmmode\text{\sout{\ensuremath{#1}}}\else\sout{#1}\fi}

\begin{document}

\selectlanguage{english}
%\selectlanguage{italian}

\title[]
{Nilpotent Gelfand pairs\\ and Schwartz extensions of spherical transforms\\ via quotient pairs}

\author[]
{V\'eronique Fischer, Fulvio Ricci, Oksana Yakimova}

\address{University of Bath\\Claverton Down\\Bath BA2 7AY\\United Kingdom}
\email{v.c.m.fischer@bath.ac.uk}

\address {Scuola Normale Superiore\\ Piazza dei Cavalieri 7\\ 56126 Pisa\\ Italy}
\email{fricci@sns.it}

\address {Mathematisches Institut \\
Friedrich-Schiller-Universit\"at Jena \\ 07737 Jena \\ Germany}
\email{oksana.yakimova@uni-jena.de}

\subjclass[2010]{Primary:  43A22; Secondary: 13A50, 43A85, 43A90, 43A20}                         

\keywords{Fourier and spectral multipliers, analysis on nilpotent Lie groups, Gelfand pairs and spherical transform, Invariants}

\begin{abstract}
It has been shown \cite{ADR1, ADR2, FR, FRY1} that for several nilpotent Gelfand pairs $(N,K)$ (i.e., with $N$ a nilpotent Lie group, $K$ a compact group of automorphisms of $N$ and the algebra $L^1(N)^K$ commutative) the spherical transform establishes a 1-to-1 correspondence between the space $\cS(N)^K$ of $K$-invariant Schwartz functions on $N$ and the space $\cS(\Sigma)$ of functions on the Gelfand spectrum $\Sigma$ of $L^1(N)^K$ which extend to Schwartz functions on $\bR^d$, once $\Sigma$ is suitably embedded in $\bR^d$. We call this {\it property (S)}.

We present here a general bootstrapping method which allows to establish property (S)  to new nilpotent pairs $(N,K)$, once the same property is known for a class of {\it quotient pairs} of $(N,K)$ and a $K$-invariant form of Hadamard formula holds on $N$.

We finally show how our method can be recursively applied to prove property (S) for a significant class of nilpotent Gelfand pairs.
\end{abstract}

\maketitle

\makeatletter
\renewcommand\l@subsection{\@tocline{2}{0pt}{3pc}{5pc}{}}
\makeatother

\tableofcontents

\section{Introduction}

\bigskip

\subsection{Presentation}

In this paper, we present our progress in the investigations of spectral properties 
of the abelian convolution  algebra  associated with a nilpotent Gelfand pair.
The properties we are interested in 
can be rephrased 
with essentially two different equivalent viewpoints:
on one hand in terms of functions defined on the spectrum of this algebra 
and on the other hand as spectral multipliers in several operators.
In the Euclidean setting, this equivalence follows readily from the basic properties of the Fourier transform and is commonly used when studying or using Euclidean Fourier multipliers.
Nilpotent Gelfand pairs provide a natural setting and a relatively explicit model 
where these questions can be investigated further. 

Let $N$ be a connected and simply connected nilpotent Lie group and $K$ a compact group of automorphisms of $N$. We say that $(N,K)$ is a {\it nilpotent Gelfand pair} if the convolution algebra $L^1(N)^K$ of $K$ -invariant integrable functions on $N$ is commutative. This is equivalent to saying that $(K\ltimes N,K)$ is a Gelfand pair in the ordinary sense \cite{Fa, W}. It must be mentioned from the very beginning that the group $N$ in a nilpotent Gelfand pair cannot  \cite{BJR90} have step greater than 2.

In this paper we address the following problem concerning smoothness preserving  properties of the spherical transform $\cG$ on $(N,K)$. 

It has been established in \cite{FeRu} that the Gelfand spectrum of $L^1(N)^K$ can be identified with a closed subset of $\bR^d$ for some $d$. In fact, there are many such identifications, depending on the choice of a generating set $\cD=(D_1,\dots,D_d)$ for the algebra $\bD(N)^K$ of left- and $K$-invariant differential operators on $N$, with $D_j$ symmetric for every $j$.

This identification consists in associating to each bounded spherical function $\ph$ the $d$-tuple $\big(\xi_1(\ph),\dots,\xi_d(\ph)\big)$ of eigenvalues of $\ph$ under $D_1,\dots,D_d$. The closed set $\Sigma_\cD$ of these $d$-tuples is then a homeomorphic image of the Gelfand spectrum $\Sigma$ and is also the joint $L^2$-spectrum of (the self-adjoint extensions of) the $D_j$.

Examples of  immersions of Gelfand spectra in $\bR^d$ are 
\begin{itemize}
\item the orbit space $\bR^n/K$, with $N=\bR^n$ and $K\subset O_n$, realised as the image of $\bR^n$ in $\bR^d$ under the polynomial map $P(x)=\big(p_1(x),\dots,p_d(x)\big)$, for a given choice of the Hilbert basis $(p_1,\dots,p_d)$ of $\bR[\bR^n]^K$;
\item the Heisenberg fan in $\bR^2$, with $N=H_n$, the $(2n+1)$-dimensional Heisenberg group, $K=U_n$ and $\cD$ consisting of the sub-Laplacian and the central derivative.
\end{itemize}

The problem we address is whether the spherical transform maps injectively the $K$-invariant Schwartz space $\cS(N)^K$ onto the space $\cS(\Sigma_\cD)$ of restrictions to $\Sigma_\cD$ of Schwartz functions on~$\bR^d$. We call this {\it property (S)}; see Section \ref{subsec_S} for the precise formulation.

Property (S) is known to hold for all pairs in which $N$ equal to $\bR^n$ or to a Heisenberg group  and for pairs where the action of $K$ on the centre of $\fn$ is rank-one, i.e., orbits are full spheres \cite{ADR1, ADR2, FR, FRY1}. 

The scope of this paper is to provide a general   inductive scheme which  allows to deduce  property (S) on more complex pairs from its validity on simpler ones. 

One feature of our approach to the general problem is that it does not rely on explicit formulas for spherical functions or for the differential operators involved, but rather on a bootstrapping procedure. Explicit formulas of the kind mentioned above have been found only for special pairs with $N$ equal to the Heisenberg group \cite{FW}. 

We first prove, in Section \ref{sec_procedure},  that property (S) is ``hereditary'' under the following operations:
\begin{itemize}
\item {\it central reductions} of $N$, obtained by replacing $N$ with a quotient modulo a $K$-invariant central subgroup;
\item {\it normal extensions} of $K$, obtained by replacing $K$ by a larger compact group $K^\sharp$ of aurtomorphisms of $N$, in which $K$ is normal;
\item taking {\it direct products} $(N_1\times N_2,K_1\times K_2)$ of pairs $(N_1,K_1)$, $(N_2,K_2)$  both with property~(S).
\end{itemize}

The central part of the paper is devoted to proving our main result, Theorem \ref{thm_main}, stating that, for a general nilpotent Gelfand pair,  property (S) holds provided  two   hypotheses are verified:
\begin{enumerate}
\item[(i)] property (S) holds for a family of lower-dimensional ``quotient pairs'';
\item[(ii)] there is a Hadamard-type expansion formula for spherical transforms at points of a ``singular subset'' of $\Sigma_\cD$; we call this {\it hypothesis (H)}.
\end{enumerate}

The notion of quotient pair and hypothesis (H) will be presented and studied in Sections \ref{sec_quotient_pairs} and \ref{sec_checkN+H} respectively.

In the last part of the paper we apply the previous results to prove property (S) for the following pairs\footnote{The table gives the Lie algebra $\fn$ of $N$ split as $\fn=\fv\oplus\fw$ with $\fw=[\fn,\fn]$ and $\fv$ $K$-invariant. The limitations on $n$ in the right-most column are meant to avoid superpositions with  already known cases.
For non-obvious notation and the explicit forms of the Lie brackets, see Section 7.} with a higher rank action of $K$ on the centre of $N$:

\begin{equation}\label{table_block1+2}
\begin{array}{|r|c|r@{\quad\oplus\quad }l|l|}
\hline
&K&
\fv &\fw&\\
\hline
1&{\rm SO}_n&\bR^n&\fs\fo_n&n\ge4\\
2&{\rm U}_n&\bC^n&\fu_n&n\ge2\\
3&\text{Sp}_n&\bH^n&(HS^2_0\bH^n\oplus\IM\bH)&n\ge2\\
%\hline
4&{\rm SU}_{2n+1}&\bC^{2n+1}&\Lambda^2\bC^{2n+1}&n\ge1 \\
5&{\rm U}_{2n+1}&\bC^{2n+1}&(\Lambda^2\bC^{2n+1}\oplus\bR)&n\ge1\\
6& {\rm SU}_{2n}& \bC^{2n}&( \Lambda^2\bC^{2n}\oplus\bR)&n\ge2\\
\hline
\end{array}
\end{equation}
%\captionof{table}{Table \ref{table_block1+2}}

\medskip

These pairs appear in Vinberg's classification \cite{V2} of nilpotent Gelfand pairs with the two properties that $\fv$ is irreducible under $K$ and that they cannot be obtained from others by means of normal extensions or central reductions.

The pairs in Table \eqref{table_block1+2} do not exhaust the full Vinberg classification. As to the other Vinberg pairs, we have a proof that they also have property (S), but we  do not include the proof in this paper in order to avoid  extra  arguments which at the moment only apply to individual cases\footnote{Hypothesis (H) has been proved to hold for all Vinberg pairs in \cite{FRY2}. A proof of property (S) for the remaining Vinberg pairs will appear elsewhere.}. 
One advantage in restricting ourselves to the pairs in Table \eqref{table_block1+2} is that this family of pairs is self-contained, in the sense that the quotient pairs that intervene belong to the same class or have rank-one action on the center.

\subsection{The state of the art about property (S)}

We have already mentioned that, given self-adjoint operators $D_1,\dots,D_d$ generating $\bD(N)^K$, their joint $L^2$ spectrum is the set $\Sigma_\cD\subset\bR^d$ of $d$-tuples of their eigenvalues on the bounded spherical functions, cf. Section \ref{subsubsec_sph+cG}.  
One of the two implications in property (S) is already known, namely
$$
m\in\cS(\Sigma_\cD)\ \Longrightarrow\ \cG\inv m\in\cS(N)^K.
$$

This follows from the fact that $\cG\inv m$ is the convolution kernel $K_m$ of the operator $m(D_1,\dots,D_d)$, i.e. such that
$$
m(D_1,\dots,D_d)f=f*K_m\ ,
$$
and that the $D_j$ can be assumed to be Rockland operators; see Section \ref{subsec_S}, in particular Theorem \ref{hulanicki}.

Hence the crucial question is the opposite implication in property (S): given a $K$-invariant Schwartz function $F$ on $N$, does $\cG F$ extend from $\Sigma_\cD$ to a Schwartz function on $\bR^d$? It must be noticed that the answer does not depend on the choice of the system $\cD$ of differential operators (Proposition \ref{indipendence}).

In the most elementary case of $N=\bR^n$, with $K\subset O_n$, the (positive) answer follows directly from the G. Schwarz theorem \cite{Schw} about representability of $K$-invariant $C^\infty$-functions on $\bR^n$ as $C^\infty$-functions on a system of fundamental invariants, cf. \cite{ADR2} for a detailed discussion.

The next level of complexity is that of the Heisenberg group $H_n\cong \bC^n\times\bR$ as $N$,  $K$ being a closed subgroup of $U_n$ giving the required commutativity of $L^1(N)^K$. The method of proof, given in the general case in \cite{ADR2}, is an early use of the hypotheses (i), (ii) above. The quotient pair in (i) is in this case $(\bC^n,K)$, while the singular subset of $\Sigma_\cD$ in (ii) is the set of spherical functions with eigenvalue 0 relative to the central derivative $\de_t$. The Hadamard expansion is then done in the spectral variable $\la$ corresponding to $\de_t$. One proves that, for $F\in\cS(H_n)^K$ and every $N$,
$$
\cG F(\xi,\la)=g_0(\xi)+g_1(\xi)\la+\cdots g_N(\xi)\la^N+\la^{N+1}\cG F_N(\xi,\la)\ ,
$$
with $g_0,\dots,g_N\in\cS(\bR)$ and $F_N\in\cS(H_n)^K$. Here $\xi$ denote the $(d-1)$-tuple of remaining spectral coordinates.

When we pass to step-two groups different from the Heisenberg group, i.e., with a higher dimensional centre, matters become more complicated because two aspects must be taken into account: the action of $K$ on the centre $\fz$ of $\fn$ is no longer trivial in general, and the derived algebra $\fw=[\fn,\fn]$ may be properly contained in $\fz$. These elements have an effect on the structure of the Gelfand spectrum $\Sigma$, since the bounded spherical functions (which are all of positive type by \cite{BJR90}) are coefficients of the irreducible unitary representations on $N$, for which one of the parameters is given by the $K$-orbits in $\fw$.

This justifies that the first level of complexity beyond Heisenberg groups is that of groups with a rank-one action of $K$ on  $\fz$, for which there is only a one-parameter family of orbits in $\fz$ (and hence $\fw=\fz$). More elaborated applications of hypotheses (i) and (ii) to prove condition (S) on these pairs are in \cite{FR,FRY1}.

\subsection{Plan of the paper}

The paper is organised as follows.

In Section \ref{sec_generality}, we  set the basic notation and recall the general facts on nilpotent Gelfand pairs and  property (S).
In Section \ref{sec_procedure} we prove the above mentioned hereditary properties of pairs with property (S).

In Section \ref{sec_quotient_pairs} 
we  introduce the quotient pairs appearing in hypothesis (i). 
For $t\in\fw$, we denote by $K_t$  the stabilizer of $t$ in $K$, by $T_t(Kt)$  the tangent space at $t$ to the $K$-orbit $Kt$, and by the central quotient $\fn_t=\fv\oplus\big(\fw/T_t(Kt)\big)$. To avoid trivialities, we assume that $t$ is not fixed by $K$. 

We then split $\fw$ as
$$
\fw_0\oplus\check\fw\ ,
$$
where $\check\fw$ is the space of $K$-fixed elements and $\fw_0$ is its $K$-invariant complement. To each $t\in\fw_0\setminus\{0\}$ we associate the {\it quotient pair} $(N_t,K_t)$. 

Using  the notion of slice for a linear action of a group on a vector space, we present the radialisation method for constructing $K$-invariant functions on $N$ from $K_t$-invariant functions on $N_t$.

In Section \ref{sec_towards} we prove that, under the only assumption that (i) holds, property (S) holds limited to $F$ in the space $\cS_0(N)^K$ of Schwartz functions which have vanishing moments of any order in the $\fw_0$-variables.

In Section \ref{sec_checkN+H}, 
we introduce the singular subset of the Gelfand spectrum mentioned in (ii), defined as  the set of spherical functions which have eigenvalue 0 relative to derivatives in the $\fw_0$-variables. This set is naturally identified with the Gelfand spectrum of $(\check N,K)$ with $\check\fn=\fv\oplus(\fw/\fw_0)$. 

We then
show how hypothesis (H) allows to remove the vanishing moments condition and obtain property~(S) under assumptions (i) and (ii) above.

In Section \ref{sec-application}, 
we prove that property (S) holds for the pairs in Table 1.

\subsection{Acknowledgments}  
The third author 
wishes to thank the Centro di Ricerca Matematica Ennio De~Giorgi
and the Scuola Normale Superiore in Pisa for regular invitations 
and D.\,Timashev for bringing the book of Bredon to her attention. 
Part of the work was carried out 
at the Max-Planck-Institute f\"ur Mathematik (Bonn), where all the three authors have been short-term guests. 
We would like to thank these institutes for their warm hospitality and for  providing an excellent stimulating environment.

\section{Generalities on nilpotent Gelfand pairs and property (S)}
\label{sec_generality}

This section is devoted to some generalities  regarding 
 nilpotent Gelfand pairs and property (S).
 In Section~\ref{subsec_def_ngp}, 
we recall the equivalent definitions of a nilpotent Gelfand pair $(N,K)$
and set some notation for $N$.
In Section \ref{subsec_spherical}, 
we give equivalent descriptions of spherical functions and Gelfand spectra.
 We discuss property (S) in Section \ref{subsec_S}.
We eventually recall the notions of symmetrisation and Hilbert basis in 
Sections~\ref{subsec_sym} and \ref{subsec_hilbert_base} respectively.

\subsection{Definitions and notation}
\label{subsec_def_ngp}

Let $N$ be a nilpotent, connected and simply connected Lie group, and let $K$ be a compact group of automorphisms of $N$.

\begin{definition}\label{nGp}
 $(N,K)$ is a {\it nilpotent Gelfand pair}  if either of the following equivalent conditions is satisfied:
\begin{enumerate}
\item[\rm(i)]  the convolution algebra $L^1(N)^K$ of integrable $K$-invariant functions on $N$ is commutative;
\item[\rm(ii)] the algebra $\bD(N)^K$ of left-invariant and $K$-invariant differential operators on $N$ is commutative;
\item[\rm(iii)] if $\pi$ is an irreducible unitary representation of $N$ and $K_\pi$ is the stabiliser in $K$ of the equivalence class of $\pi$, then the representation space $\cH_\pi$ decomposes under $K_\pi$ without multiplicities;
\item[\rm(iv)] same as {\rm (iii)}, for $\pi$ generic.
\end{enumerate}
\end{definition}

This is the same as  saying that $(K\ltimes N,K)$ is a Gelfand pair. With $\fn$ denoting the Lie algebra of $N$, we often write 
$(\fn,K)$ instead of $(N,K)$.

We recall the following  basic result from  \cite{BJR90}.

\begin{theorem}
If $(N,K)$ is a nilpotent Gelfand pair, $N$ has step at most 2.
\end{theorem}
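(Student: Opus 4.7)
The plan is to argue by contradiction, using the representation-theoretic characterization (iv) of Definition \ref{nGp}: for a generic irreducible unitary representation $\pi$ of $N$, the restriction $\pi|_{K_\pi}$ must be multiplicity free. Assuming $\mathrm{step}(N)\ge 3$, the aim is to exhibit a generic $\pi$ violating this property.

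First I would reduce to step exactly $3$. The Gelfand pair property is inherited by $K$-invariant quotients $N/M$: averaging defines an embedding $L^1(N/M)^K\hookrightarrow L^1(N)^K$ as a subalgebra, so commutativity is inherited. Since each term $N_k$ of the lower central series is $K$-invariant, replacing $N$ by $N/N_4$ reduces the analysis to $\mathrm{step}(N)=3$ with $N_3\neq\{e\}$.

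Next I would pass to the coadjoint orbit picture via Kirillov theory. Irreducible unitary representations of $N$ are parametrized by orbits $\cO\subset\fn^*$, carrying the Kirillov symplectic form $\omega_\ell(X,Y)=\ell([X,Y])$ for $\ell\in\cO$. Letting $\fk\subset\mathrm{Der}(\fn)$ denote the Lie algebra of $K$, the $K$-action on $\cO$ has infinitesimal orbit $\fk\cdot\ell\subset T_\ell\cO$. By a criterion going back to Vinberg, condition (iv) is equivalent to coisotropicity of the generic $K$-action on coadjoint orbits: $(\fk\cdot\ell)^{\omega_\ell}\subset\fk\cdot\ell$. I would then pick $\ell\in\fn^*$ with $\ell|_{N_3}\neq 0$ generic and aim to show that coisotropicity fails.

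The main obstacle is precisely this last step. The step-$3$ bracket produces pairings in $\omega_\ell$ coupling elements spanning the first and third terms of the filtration in a way not captured by the action of derivations preserving the lower central series. This mismatch should force the existence of two vectors in $(\fk\cdot\ell)^{\omega_\ell}$ with non-zero $\omega_\ell$-pairing, yielding the desired contradiction. Making this rigorous requires careful dimension counts and explicit coordinate computations adapted to the filtration, which is the technical core of the argument in \cite{BJR90}.
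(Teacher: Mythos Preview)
The paper does not give its own proof of this statement; it records it as a known result and cites \cite{BJR90}. So there is no proof in the paper to compare your attempt against.

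A few remarks on your sketch nonetheless.

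First, the reduction step contains an error. Functions on $N/M$ pulled back to $N$ are $M$-invariant and hence not integrable on $N$ when $M$ is a positive-dimensional subgroup, so there is no embedding $L^1(N/M)^K\hookrightarrow L^1(N)^K$ as you claim. The map goes the other way: integration along $M$ gives a surjective algebra homomorphism $L^1(N)^K\twoheadrightarrow L^1(N/M)^K$, and a quotient of a commutative algebra is commutative. Alternatively, use characterisation (ii) of Definition~\ref{nGp} and observe that $\bD(N/M)^K$ is a quotient of $\bD(N)^K$. Either way the reduction to step exactly $3$ is easily repaired.

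Second, and more substantively: the coisotropicity criterion you invoke is due to Vinberg \cite{V1}, published in 2001, more than a decade after \cite{BJR90}. So whatever ``the technical core of the argument in \cite{BJR90}'' may be, it is not the symplectic computation on coadjoint orbits that you outline; the original argument is more direct and does not rely on this criterion. Your coisotropicity approach is plausible in principle, but you have left the essential step --- producing, for a generic functional $\ell$ with $\ell|_{\fn_3}\ne0$, an explicit pair of vectors in $(\fk\cdot\ell)^{\omega_\ell}$ with nonzero pairing --- entirely unproved, and you will not find its completion in the reference you cite for it.
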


 We can then split $\fn$ as the direct sum $\fv\oplus\fw$, where $\fw=[\fn,\fn]$ is the derived algebra and $\fv$ a $K$-invariant complement of it. 
As $N$ has step at most 2, $\fw$ is abelian and is contained in the centre $\fz$ of $\fn$.
%In fact, for  pairs satisfying Vinberg's condition, by definition of this condition,
%the derived algebra coincides with the centre of $\fn$
%and this explains the choice of notation.
We regard the Lie bracket on $\fn$ as a skew-symmetric bilinear map from $\fv\times\fv$ to $\fw$. 

Using the exponential coordinates to parametrise elements of $N$, the product on $N$ is expressed as an operation on $\fv\oplus\fw$, via the Baker-Campbell-Hausdorff formula
$$
(v,w)\cdot(v',w')=\Big(v+v',w+w'+\half[v,v']\Big)\ .
$$

Accordingly, 0 will denote the identity element of $N$.

 Introducing a  $K$-invariant  inner product $\lan\ ,\ \ran$  on $\fv\oplus\fw$ under which $\fv\perp\fw$, we identify $\fn^*$ with $\fn$ throughout the paper.

On $N$, as well as on its Lie algebra $\fn=\fv\oplus\fw$,  
we consider the automorphic \emph{(group) dilations} 
\begin{equation}
\label{eq_gr_dil}
\del\cdot(v,w)=(\del^\half v,\del w),
\qquad \delta>0.
\end{equation}
We say that a function $f$, resp.  a differential operator $D$  on $N$, is homogeneous of degree $\nu$ (with respect to the group dilations)
when 
$$
f(\delta \cdot (v,w)) = \delta ^\nu f(v,w), 
\quad \delta>0, \ (v,w)\in N,
$$
respectively
$$
\left(D (f(\delta \, \cdot \,)\right)(v,w) = \delta ^\nu D f(v,w), 
\quad \delta>0, \ (v,w)\in N, \ f\in \cC^\infty(N).
$$

However, the expressions ``degree of a polynomial'' and ``order of a differential operator'' will have the ordinary meaning.

We fix Lebesgue measures $dv$ on $\fv$ and $dw$ on $\fw$ and the Haar measure  $dx=dvdw$   on $N$.

\subsection{Spherical functions}
\label{subsec_spherical}

Harmonic analysis on Gelfand pairs is based on the notions of spherical function (with particular emphasis on the bounded ones) and spherical transform, see e.g. \cite{Fa}, \cite[Ch. IV]{He2}. 
In this section and the next, we recall  these notions in the context of nilpotent Gelfand pairs and the basic properties which will be relevant for us.

%\subsubsection{Spherical functions}
Let $(N,K)$ be a nilpotent Gelfand pair.
The spherical functions are the joint $K$-invariant functions  $\ph$ on $N$ which are eigenfunctions of all operators in $\bD(N)^K$ and take value 1 at the identity. Spherical functions are real-analytic. 
Given $D\in\bD(N)^K$ and a spherical function~$\ph$, we denote by $\xi(D,\ph)$ the corresponding eigenvalue.
Hence a smooth $K$-invariant function $\ph$ on $N$ is a spherical function whenever
$$
\forall D\in \bD(N)^K\qquad
\exists \xi(D,\ph)\in \bC\qquad
D \ph = \xi(D,\ph) \ \ph, 
\quad \mbox{and}\quad\ph(0)=1.
$$

The bounded spherical functions are  characterised by the multiplicative identity
$$
\int_Nf*g(x)\ph(x\inv)\,dx=\Big(\int_Nf(x)\ph(x\inv)\,dx\Big)\Big(\int_Ng(x)\ph(x\inv)\,dx\Big)
$$
for all $f,g\in L^1(N)^K$.

It has been proved in \cite{BJR90} that all bounded spherical functions of  $(N,K)$ are of positive type. Hence they are in one-to-one correspondence with (equivalence classes of) irreducible unitary representations of $K\ltimes N$  admitting non-trivial $K$-invariant vectors and can be expressed as diagonal matrix entries of such representations. 
However, we prefer to view these expressions  as  partial traces of  irreducible unitary representations of $N$, see \eqref{trace} below.

For $\zeta\in\fw$, denote by $\fr_\zeta\subseteq\fv$ the radical of the bilinear form $B_\zeta(v,v')=\lan\zeta,[v,v']\ran$.
The following statement is a direct consequence of the Stone-von Neumann theorem and we omit its proof.

\begin{lemma}\label{representations}
For each $\zeta\in\fw$ 
there is a unique, up to equivalence, irreducible unitary representation $\pi_\zeta$ of $N$ such that $d\pi_\zeta(0,w)=i\lan\zeta,w\ran I$ for all $w\in \fw$ and $d\pi_\zeta(v,0)=0$ for all $v\in \fr_\zeta$. 

For each $\zeta\in\fw$ and $\omega\in\fr_\zeta$,  define the irreducible representation $\pi_{\zeta,\omega}$ by the condition
 \begin{equation}\label{zeta-omega}
d\pi_{\zeta,\omega}(v,w)=d\pi_\zeta(v,w)+i\lan v,\omega\ran I\ .
\end{equation}
Every irreducible unitary representation of $N$ is equivalent to one, and only one, $\pi_{\zeta,\omega}$.
\end{lemma}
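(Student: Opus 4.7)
The plan is to reduce the classification to the Stone--von Neumann theorem applied to a Heisenberg quotient of $N$. The starting observation is that, for any irreducible unitary $\pi$, Schur's lemma applied to the central subgroup $\exp\fw$ forces $d\pi(0,w)=i\lan\zeta_\pi,w\ran I$ for a unique $\zeta_\pi\in\fw$; and if $v\in\fr_{\zeta_\pi}$, the Lie-algebra identity
\[
[d\pi(v,0),d\pi(v',0)]=d\pi(0,[v,v'])=iB_{\zeta_\pi}(v,v')I=0 \qquad(v'\in\fv)
\]
shows that $d\pi(v,0)$ commutes with every element of $d\pi(\fn)$, so Schur again yields $d\pi(v,0)=i\lan\omega_\pi,v\ran I$ for a unique $\omega_\pi\in\fr_{\zeta_\pi}$ (identifying $\fr_{\zeta_\pi}^*$ with $\fr_{\zeta_\pi}$ via the fixed inner product on $\fv$). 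This already gives the inequivalence part, since the pair $(\zeta_\pi,\omega_\pi)$ is manifestly an invariant of the equivalence class of $\pi$.

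For existence of $\pi_\zeta$ I would first observe that
\[
\fk_\zeta:=\fr_\zeta\oplus\{w\in\fw:\lan\zeta,w\ran=0\}
\]
is an ideal of $\fn$, because $[\fr_\zeta,\fv]\subset\ker\lan\zeta,\cdot\ran$ by the very definition of $\fr_\zeta$. When $\zeta=0$ one takes $\pi_0$ to be the trivial representation. When $\zeta\neq 0$, the quotient $\fn/\fk_\zeta$ has one-dimensional centre $\fw/\ker\lan\zeta,\cdot\ran$, while its horizontal part $\fv/\fr_\zeta$ carries the symplectic form induced by $B_\zeta$ (non-degenerate by construction of $\fr_\zeta$); so $\fn/\fk_\zeta$ is a Heisenberg Lie algebra. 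The Stone--von Neumann theorem then furnishes, up to equivalence, a unique irreducible representation of the corresponding Heisenberg group with central character $w\mapsto e^{i\lan\zeta,w\ran}$; pulling back to $N$ gives $\pi_\zeta$, and any other candidate representation must annihilate $\fk_\zeta$ and factor through the same Heisenberg quotient with the same central character, hence is equivalent to $\pi_\zeta$.

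To finish, I would note that $(v,w)\mapsto e^{i\lan v,\omega\ran}$ is a unitary character of $N$ (trivial on $\fw=[\fn,\fn]$), so $\pi_{\zeta,\omega}:=e^{i\lan\cdot,\omega\ran}\pi_\zeta$ is an irreducible unitary representation of $N$ whose differential satisfies \eqref{zeta-omega}. Conversely, given an arbitrary irreducible $\pi$, extracting $(\zeta_\pi,\omega_\pi)$ as in the first paragraph and forming the twist $e^{-i\lan\cdot,\omega_\pi\ran}\pi$ yields a representation with the two defining properties of $\pi_{\zeta_\pi}$, which by the uniqueness already proved is equivalent to $\pi_{\zeta_\pi}$; hence $\pi\simeq\pi_{\zeta_\pi,\omega_\pi}$. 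The single delicate point in the whole scheme is checking that $\fn/\fk_\zeta$ is genuinely Heisenberg with the correct induced symplectic form --- this is where the identification of the radical of $B_\zeta$ with $\fr_\zeta$ plays its role; everything else is a mechanical assembly of Schur's lemma and Stone--von Neumann.
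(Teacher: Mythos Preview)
Your argument is correct and is precisely the expansion the paper has in mind: the authors omit the proof, stating only that the lemma ``is a direct consequence of the Stone--von Neumann theorem,'' and your reduction via the Heisenberg quotient $\fn/\fk_\zeta$ together with the character twist is exactly that consequence. The only cosmetic point is that Schur's lemma should strictly be applied to the one-parameter unitary groups $t\mapsto\pi(\exp t(v,0))$ rather than directly to the unbounded $d\pi(v,0)$, but this is standard and does not affect the argument.
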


We denote by $\cH_\zeta$ the representation space of the representations $\pi_{\zeta,\omega}$. 
The stabiliser $K_{\zeta,\omega}\subset K$ of the point $\omega+\zeta\in\fn$ also stabilises the equivalence class of $\pi_{\zeta,\omega}$,  giving rise to a unitary representation\footnote{In general, this operation leads to a projective representation of the stabiliser in $K$. In our case we obtain true representations, since restriction of the metaplectic representation of ${\rm Sp}(\fr_\zeta^\perp,B_\zeta)$ to a compact subgroup can be linearised \cite{F}.}  $\sigma$ of $K_{\zeta,\omega}$ on $\cH_\zeta$ such that
$$
\pi_{\zeta,\omega}(kv,kw)=\sigma(k)\pi_{\zeta,\omega}(v,w)\sigma(k)^{-1}
$$
for $k\in K_{\zeta,\omega}$.

We have the following characterisation, cf.  \cite{C, V1}.

\begin{proposition}\label{carcano}
$(N,K)$ is a nilpotent Gelfand pair if and only if, for each  (resp. for generic) $\zeta,\omega$, $\cH_\zeta$ decomposes without multiplicities into irreducible components under the action of $K_{\zeta,\omega}$.
\end{proposition}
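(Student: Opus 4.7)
The plan is to read the proposition as a direct translation of conditions (iii) and (iv) in Definition \ref{nGp} into the explicit parametrization of irreducible unitary representations of $N$ supplied by Lemma \ref{representations}. So the job reduces to matching notation; there is no new analytic content to prove.

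First, I would invoke Lemma \ref{representations}: every equivalence class of irreducible unitary representation of $N$ is uniquely of the form $[\pi_{\zeta,\omega}]$ with $\zeta\in\fw$ and $\omega\in\fr_\zeta$, and these data are in bijection with the elements $\omega+\zeta\in\fv\oplus\fw=\fn$ satisfying the constraint $\omega\in\fr_\zeta$.

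Second, I would check that the natural $K$-action on equivalence classes of representations, $(k\cdot\pi)(x):=\pi(k\inv x)$, corresponds under this bijection to the linear $K$-action on $\fn$ restricted to pairs $(\zeta,\omega)$. Concretely, using \eqref{zeta-omega},
$$
d(k\cdot\pi_{\zeta,\omega})(v,w)=d\pi_\zeta(k\inv v,k\inv w)+i\lan k\inv v,\omega\ran I,
$$
and since $\lan \zeta,k\inv w\ran=\lan k\zeta,w\ran$ and $\lan k\inv v,\omega\ran=\lan v,k\omega\ran$, together with the observation that $\fr_{k\zeta}=k\fr_\zeta$ and that $d\pi_\zeta(k\inv v,0)=0$ for $v\in\fr_{k\zeta}$, the representation $k\cdot\pi_{\zeta,\omega}$ satisfies the defining properties of $\pi_{k\zeta,k\omega}$. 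By the uniqueness in Lemma \ref{representations}, $k\cdot\pi_{\zeta,\omega}\cong\pi_{k\zeta,k\omega}$. In particular, the stabiliser $K_\pi$ of the equivalence class $[\pi_{\zeta,\omega}]$ coincides with the stabiliser $K_{\zeta,\omega}$ of the point $\omega+\zeta\in\fn$.

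Third, under the intertwining operator $\sigma(k)$ introduced after Lemma \ref{representations} (which, as noted in the footnote, is a genuine rather than projective representation of $K_{\zeta,\omega}$ on $\cH_\zeta$ thanks to linearizability of the metaplectic representation on compact subgroups), the $K_\pi$-module $\cH_\pi$ is identified with the $K_{\zeta,\omega}$-module $\cH_\zeta$. With these three identifications in place, condition (iii) of Definition \ref{nGp} reads verbatim as ``for each $\zeta,\omega$, $\cH_\zeta$ decomposes without multiplicities under $K_{\zeta,\omega}$'', and condition (iv) as the same statement for generic $\zeta,\omega$, yielding both directions of the proposition simultaneously.

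The only minor subtlety I anticipate is the careful verification that $k\cdot\pi_{\zeta,\omega}\cong\pi_{k\zeta,k\omega}$ genuinely matches stabilisers, i.e.\ that no extra identification among classes of $\pi_{\zeta,\omega}$ can occur outside the orbit of $K$ on $\fv\oplus\fw$; this is immediately ruled out by the uniqueness clause of Lemma \ref{representations}. Hence the proposition follows with essentially no further work beyond recording the dictionary.
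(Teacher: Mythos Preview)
Your proposal is correct. The paper itself does not supply a proof of this proposition; it simply records the statement with a citation to \cite{C, V1}. Your argument is the natural one: it amounts to translating conditions (iii) and (iv) of Definition~\ref{nGp} through the explicit parametrization of $\widehat N$ given in Lemma~\ref{representations}, the key step being the verification that $k\cdot\pi_{\zeta,\omega}\cong\pi_{k\zeta,k\omega}$ and hence $K_\pi=K_{\zeta,\omega}$. That identification is indeed immediate from the uniqueness clause of Lemma~\ref{representations}, so your reading is exactly what the cited references establish.
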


Hence, if $(N,K)$ is a nilpotent Gelfand pair,
\begin{equation}\label{multiplicity-free}
\cH_\zeta=\sum_{\mu\in \fX_{\zeta,\omega}}V(\mu)\ ,
\qquad \mbox{with} \quad
\fX_{\zeta,\omega}\subseteq \widehat{K_{\zeta,\omega}}.
\end{equation}
 To each $\mu\in\fX_{\zeta,\omega}$ we can associate the spherical function
\begin{equation}\label{trace}
\ph_{\zeta,\omega,\mu}(v,w)=\frac1{\dim V(\mu)}\int_K \tr\big(\pi_{\zeta,\omega}(kv,kw)_{|_{V(\mu)}}\big)\,dk\ .
\end{equation}

For a given $k\in K$, we have $\fX_{k\zeta,k\omega}=\fX_{\zeta,\omega}$, under the natural identification of the dual object $\widehat{K_{\zeta,\omega}}$ of $K_{\zeta,\omega}$ with the dual object of $K_{\pi_{\zeta,\omega}^k}=k\inv K_{\pi_{\zeta,\omega}} k$, and
\begin{equation}
\label{eq_ph_Kequivariance}
\ph_{k\zeta,k\omega,\mu}=\ph_{\zeta,\omega,\mu}\ .
\end{equation}
Up to this $K$-equivariance, 
the parametrisation of the spherical function via $\ph_{\zeta,\omega,\mu}$ is unique.

\subsection{Gelfand spectrum and spherical transform}
\label{subsubsec_sph+cG}

The \emph{Gelfand spectrum} of the nilpotent Gelfand pair~$(N,K)$ is the 
spectrum  of the commutative convolution algebra $L^1(N)^K$,

$$
\Sigma(N,K)=\big\{\ph: \ph\text{ bounded spherical function}\big\}
$$
 with the compact-open topology. We will  denote it just by $\Sigma$ if there is no ambiguity.
 
The \emph{spherical transform} of a function $F\in L^1(N)^K$ is defined via
\begin{equation}\label{transform}
\cG F(\ph)=\int_N F(x)\ph(x\inv)\,dx,
\quad \ph\in \Sigma.
\end{equation}
This yields a continuous linear one-to-one mapping 
$\cG:L^1(N)^K\longrightarrow C_0(\Sigma)$  which transforms convolution into pointwise multiplication.

\medskip

The last part of the previous section shows that  $\Sigma(N,K)$ can be parametrised by  triples $(\zeta,\omega,\mu)$, precisely
\begin{equation}\label{Sigma-triples}
\Sigma(N,K)\cong \big\{(\zeta,\omega,\mu):\zeta\in\fw,\omega\in\fr_\zeta,\mu\in\widehat{K_{(\zeta,\omega)}}\big\}/K,
\end{equation}
where the action of $K$ is expressed by \eqref{eq_ph_Kequivariance}.

But a different kind of parametrisation provides topological embeddings of $\Sigma$ into Euclidean spaces in the following way. Let
$$
\cD=(D_1,\dots,D_d)\ ,
$$ 
be a $d$-tuple of essentially self-adjoint operators which generate $\bD(N)^K$ as an algebra. Such a tuple exists and form a family of strongly commuting self-adjoint operators  whose  joint $L^2$-spectrum can be identified with the Gelfand spectrum, see \cite{FR}. Indeed,
given $\ph\in\Sigma$, denote by $\xi_j(\ph)\in\bR$ the eigenvalue $\xi(D_j,\ph)$ of $\ph$ under $D_j$ for each $j=1,\ldots,j$.
Every bounded spherical function $\ph$ is identified by the $d$-tuple $\xi=\xi(\ph)=\big(\xi_1(\ph),\dots,\xi_d(\ph)\big)$ of eigenvalues of $\ph$ relative to $\cD$;
moreover the $d$-tuples $\xi(\ph)$ form a closed subset 
\begin{equation}\label{Sigma_D}
\Sigma_\cD=\big\{\xi(\ph)=\big(\xi_1(\ph),\dots,\xi_d(\ph)\big):\ph\in\Sigma\big\}
\end{equation}
 of $\bR^d$ which is homeomorphic to $\Sigma$, see \cite{FeRu}. 
 Consequently the spherical transform $\cG F$ in \eqref{transform} can be viewed as a function on $\Sigma_\cD$.
The particular choice of a $d$-tuple $\cD$ to realise of a Gelfand spectrum  
as $\Sigma_\cD \subset \bR^d$ is often irrelevant 
as there is always some polynomial mappings between two finite sets of generators of $\bD(N)^K$: 
\begin{lemma}
\label{lem_poly_map_SigmacDs}
Let $\cD=(D_1,\dots,D_d)$ and $\cD'=(D'_1,\dots,D'_{d'})$ be 
two tuples of operators generating the algebra $\bD(N)^K$.
Then there exist polynomials $P_j$, $j=1,\ldots,d$, and $Q_k$, $k=1,\ldots d'$ such that 
$D_j=P_j(\cD')$ for all $j=1,\ldots,d$,
and 
$D'_k=Q_k(\cD)$ for all $k=1,\ldots,d'$.
The maps 
$$
P=(P_1,\ldots, P_d)\ ,\qquad Q=(Q_1,\ldots, Q_{d'})
$$ 
are homeomorphisms of $\Sigma_{\cD'}$ onto $\Sigma_\cD$, resp. of $\Sigma_\cD$ onto $\Sigma_{\cD'}$, and are inverse of each other.
\end{lemma}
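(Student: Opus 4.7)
The plan is to read off everything from the defining property that both $\cD$ and $\cD'$ generate the same commutative algebra $\bD(N)^K$. The proof decomposes into three short steps.

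First, since $\cD'$ generates $\bD(N)^K$ as an algebra and each $D_j$ lies in $\bD(N)^K$, each $D_j$ admits an expression as a polynomial in the $D'_1,\ldots,D'_{d'}$. This yields polynomials $P_1,\ldots,P_d$ with $D_j=P_j(\cD')$. Symmetrically one obtains polynomials $Q_1,\ldots,Q_{d'}$ with $D'_k=Q_k(\cD)$. (Such $P_j,Q_k$ need not be unique, which is irrelevant below.)

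Second, I would pass from operator identities to scalar identities on eigenvalues. A spherical function $\ph\in\Sigma$ is by definition a joint eigenfunction for every operator in $\bD(N)^K$, and the eigenvalue assignment $D\mapsto \xi(D,\ph)$ is a ring homomorphism $\bD(N)^K\to\bC$ (this uses commutativity of $\bD(N)^K$, so that the eigenvalue of a product is the product of the eigenvalues). Consequently, applying $D_j=P_j(\cD')$ to $\ph$ gives
\[
\xi_j(\ph)\,\ph \;=\; D_j\ph \;=\; P_j(\cD')\,\ph \;=\; P_j\bigl(\xi'_1(\ph),\ldots,\xi'_{d'}(\ph)\bigr)\,\ph ,
\]
so that $\xi_j(\ph)=P_j(\xi'(\ph))$. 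Hence $P$ sends the point $\xi'(\ph)\in\Sigma_{\cD'}$ to $\xi(\ph)\in\Sigma_\cD$, and in particular $P(\Sigma_{\cD'})\subseteq\Sigma_\cD$; symmetrically $Q(\Sigma_\cD)\subseteq\Sigma_{\cD'}$.

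Third, by \eqref{Sigma_D} and the reference to \cite{FeRu}, each of the maps $\ph\mapsto\xi(\ph)$ and $\ph\mapsto\xi'(\ph)$ is a bijection from $\Sigma$ onto $\Sigma_\cD$, resp.\ onto $\Sigma_{\cD'}$. Combining this bijectivity with the two formulas $P(\xi'(\ph))=\xi(\ph)$ and $Q(\xi(\ph))=\xi'(\ph)$ yields $Q\circ P=\id_{\Sigma_{\cD'}}$ and $P\circ Q=\id_{\Sigma_\cD}$. Since polynomial maps are continuous, $P$ and $Q$ are mutually inverse continuous bijections between $\Sigma_{\cD'}$ and $\Sigma_\cD$, hence homeomorphisms.

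The only delicate step is the second one: one must verify that the polynomial identity $D_j=P_j(\cD')$ at the level of differential operators really passes to the same polynomial identity on eigenvalues. This is not automatic from $\ph$ being merely an eigenfunction of each individual $D'_k$; it requires $\ph$ to be an eigenfunction for \emph{every} operator in the algebra generated by $\cD'$, which in turn relies on the commutativity of $\bD(N)^K$ that is part of the Gelfand-pair hypothesis.
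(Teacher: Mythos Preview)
Your proof is correct and is precisely the natural argument; the paper itself states the lemma without proof, treating it as an immediate consequence of both tuples generating the same commutative algebra. One small remark: your final paragraph is overcautious, since by the paper's definition a spherical function is already a joint eigenfunction for \emph{every} operator in $\bD(N)^K$, so the passage from operator identities to eigenvalue identities is automatic without any further appeal to commutativity.
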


In the case where $N=\bR^n$ and $K$ is trivial, $\bD(\bR^n)^K$ is the algebra of all constant coefficient differential operators, and the bounded spherical functions are the unitary characters $\ph_\la(x)=e^{i\la\cdot x}$, for $\la\in\bR^n$. Taking
$$
\cD=\big(i\inv\de_{x_1},\dots,i\inv\de_{x_n}\big)\ ,
$$
we have $\xi(\ph_\la)=\la$, so that  $\Sigma_\cD=\bR^n$, and $\cG F=\hat F$ is the ordinary Fourier transform.

\subsection{Property (S)}
\label{subsec_S}

In this section, we give the precise formulation of property (S) and summarise the known results about it.

Let $\cS(N)^K$ 
denote the Fr\'echet space of $K$-invariant Schwartz function on $N$. Let
$$
\cS(\Sigma_\cD)\overset{\rm def}=\cS(\bR^d)/\{f:f_{|{\Sigma_\cD}}=0\}
$$
be the space of restrictions to $\Sigma_\cD$ of Schwartz functions on $\bR^d$, with the quotient topology. 
 Property (S) is stated as follows:
\begin{center}
(S) \,
\begin{tabular}{c}
{\it 
The spherical transform $\cG$ maps the Fr\'echet space $\cS(N)^K$ }
\\
{\it isomorphically onto $\cS(\Sigma_\cD)$.}
\end{tabular}
\end{center}

The following statement follows directly from Lemma \ref{lem_poly_map_SigmacDs}, see  \cite{ADR2} and~\cite{FR}:
\begin{proposition}\label{indipendence}
The validity of property (S) is independent of the choice
 of $\cD$.
\end{proposition}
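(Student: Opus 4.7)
My plan is to reduce the proposition to an elementary Euclidean comparison using the polynomial change of coordinates supplied by Lemma~\ref{lem_poly_map_SigmacDs}.

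Property~(S) consists of two implications: (a) every $\cG F$, for $F\in\cS(N)^K$, is the restriction to $\Sigma_\cD$ of some Schwartz function on $\bR^d$; and (b) every such restriction is equal to $\cG F$ for some $F\in\cS(N)^K$. Direction (b) is the content of the Hulanicki-type Theorem~\ref{hulanicki} and holds for \emph{any} choice of generating tuple, and once $\cG$ is bijective the open mapping theorem between Fr\'echet spaces supplies bicontinuity automatically. Hence it suffices to prove that direction (a) is independent of the choice of $\cD$.

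A naive attempt ``take $m\in\cS(\bR^d)$ extending $\cG F$ on $\Sigma_\cD$ and use $m\circ P$ on $\bR^{d'}$'' fails, because $m\circ P$ need not be Schwartz along the fibres of $P$. Instead I would compare $\cD$ and $\cD'$ via their concatenation
$$
\cD''\ :=\ (D_1,\dots,D_d,D'_1,\dots,D'_{d'})\,,
$$
which still generates $\bD(N)^K$. The relations $D'_k=Q_k(\cD)$ of Lemma~\ref{lem_poly_map_SigmacDs} yield
$$
\Sigma_{\cD''}\ =\ \{(\xi,Q(\xi)):\xi\in\Sigma_\cD\}\ =\ \{(P(\xi'),\xi'):\xi'\in\Sigma_{\cD'}\}\,,
$$
so the two coordinate projections are inverse polynomial homeomorphisms identifying $\Sigma_{\cD''}$ with both $\Sigma_\cD$ and $\Sigma_{\cD'}$. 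It suffices to prove that (a) for $\cD$ is equivalent to (a) for $\cD''$; by symmetry the same will hold for $\cD'$, which closes the argument.

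For the direction ``(a) for $\cD \Rightarrow$ (a) for $\cD''$'', I fix an auxiliary $\chi\in\cS(\bR^{d'})$ with $\chi(0)=1$ and, given $m\in\cS(\bR^d)$ with $m|_{\Sigma_\cD}=\cG F$, set $m''(\xi,\xi'):=m(\xi)\,\chi(\xi'-Q(\xi))$; since the argument of $\chi$ vanishes on $\Sigma_{\cD''}$, the restriction is $\cG F$. For the reverse direction, given $m''\in\cS(\bR^{d+d'})$ extending $\cG F$ on $\Sigma_{\cD''}$, simply set $m(\xi):=m''(\xi,Q(\xi))$. The main technical step is the verification that both auxiliary functions are genuinely Schwartz despite the polynomial growth of $Q$: for $m$ this reduces via the chain rule to $(1+|\xi|+|Q(\xi)|)^{-N}\le(1+|\xi|)^{-N}$; for $m''$ one combines $|\xi'|^b\lesssim|\xi'-Q(\xi)|^b+|Q(\xi)|^b$ with the rapid decay of $\chi$ in $\xi'-Q(\xi)$, using the Schwartz decay of $m$ to absorb any polynomial factor $|Q(\xi)|^b\lesssim (1+|\xi|)^{b\deg Q}$. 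This is the only real obstacle; both constructions are continuous in the Schwartz topology, which also yields the Fr\'echet-isomorphism part of the statement.
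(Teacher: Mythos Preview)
Your proof is correct. The paper does not give its own argument here but simply refers to Lemma~\ref{lem_poly_map_SigmacDs} and the references \cite{ADR2, FR}; your concatenation trick $\cD''=(\cD,\cD')$ together with the two explicit Schwartz extensions $m''(\xi,\xi')=m(\xi)\chi(\xi'-Q(\xi))$ and $m(\xi)=m''(\xi,Q(\xi))$ is precisely the standard way those references carry this out, and your decay estimates via $(1+|\xi'|)\lesssim(1+|\xi'-Q(\xi)|)(1+|\xi|)^{\deg Q}$ are the right ones.
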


Property (S) has been proved to hold in several cases.
For `abelian pairs', i.e., with $N=\bR^n$ and $K\subset{\rm GL}_n(\bR)$ compact, it has been shown in \cite{ADR2} that property (S) follows from G.~Schwarz's extension  \cite{Schw}  of Whitney's theorem \cite{Wh} to general linear actions of compact groups on $\bR^n$.

For nonabelian $N$, property (S) has been proved in the following cases:
\begin{enumerate}
\item[(i)]  pairs in which $N$ is a Heisenberg group or a complexified Heisenberg group \cite{ADR1, ADR2};
\item[(ii)]  ``rank-one'' pairs, where $\fw=\fz$, the centre of $\fn$, and the $K$-orbits in $\fw$ are full spheres~\cite{FR, FRY1}.
\end{enumerate}
\smallskip

The argument used in Section 5 of \cite{ADR1} also allows to obtain the following statement. We omit its proof, which only requires minor modifications.

\begin{proposition}\label{trivial}
Let $(N,K)$ be a nilpotent Gelfand pair where $K$ acts trivially on $\fw$. Then property $\rm{(S)}$ is satisfied.
\end{proposition}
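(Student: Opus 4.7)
The plan is to extend the argument of \cite[Section 5]{ADR1} (where property (S) was proved for $N$ a Heisenberg group) to this slightly more general setting in which $\fw$ may have dimension larger than one, but $K$ still acts trivially on it. The structural simplification supplied by this hypothesis is that the partial Fourier transform $\cF_\fw$ in the central variable $w$ commutes with the $K$-action and maps $\cS(N)^K$ isomorphically onto the space of Schwartz functions on $\fv\oplus\fw$ that are $K$-invariant in the $v$-variable.

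Explicitly, because $(v,w)=(v,0)\cdot(0,w)$ and $d\pi_\zeta(0,w)=i\langle\zeta,w\rangle I$, we have $\pi_{\zeta,\omega}(v,w)=e^{i\langle\zeta,w\rangle}\,\pi_{\zeta,\omega}(v,0)$, and therefore by \eqref{trace}
$$
\ph_{\zeta,\omega,\mu}(v,w)=e^{i\langle\zeta,w\rangle}\,\psi_{\zeta,\omega,\mu}(v),
$$
with $\psi_{\zeta,\omega,\mu}$ $K$-invariant in $v$. Hence
$$
\cG F(\zeta,\omega,\mu)=\int_{\fv}(\cF_\fw F)(v,\zeta)\,\psi_{\zeta,\omega,\mu}(-v)\,dv,
$$
realising $\cG$ as a family of spherical-type transforms on $\fv$ parametrised by $\zeta\in\fw$.

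I would then choose generators $\cD=(L_1,\dots,L_k,T_1,\dots,T_m)$ of $\bD(N)^K$ in which the $T_j=i\inv\de_{w_j}$ are central derivations (automatically $K$-invariant, since $K$ fixes $\fw$ pointwise) and the $L_j$ are $K$-invariant self-adjoint left-invariant operators, one of them a positive Rockland operator on $N$. Then $\Sigma_\cD\subset\bR^k\times\bR^m$, and the last $m$ coordinates of a point of $\Sigma_\cD$ are precisely $\zeta$. For each fixed $\zeta$, the fibre problem is a twisted-convolution calculus on $\fv$ with $K$-invariance built in; for $\zeta\ne 0$ this reproduces the Heisenberg-type analysis of \cite{ADR1, ADR2}, while for $\zeta=0$ it collapses to the abelian pair $(\fv,K)$ handled by G.~Schwarz's theorem~\cite{Schw} as in \cite{ADR2}.

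The principal and only delicate step is assembling joint Schwartz smoothness in $(\eta,\zeta)\in\bR^{k+m}$ across the degenerate stratum $\zeta=0$, where the representation-theoretic structure changes discontinuously. Here I would proceed exactly as in \cite[\S5]{ADR1}: for any $M\ge0$ expand $(\cF_\fw F)(v,\zeta)$ as a Taylor polynomial of order $M$ in $\zeta$ at $\zeta=0$ with Schwartz remainder; apply Hulanicki's theorem (Theorem~\ref{hulanicki}) term-by-term to conclude that each coefficient contributes an element of $\cS(\bR^k)$ to $\cG F$; and control the remainder uniformly via the chosen Rockland operator to obtain decay of arbitrarily high order in $\zeta$. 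This yields the required Schwartz extension of $\cG F$ from $\Sigma_\cD$ to $\bR^{k+m}$; the reverse implication $m\in\cS(\Sigma_\cD)\Longrightarrow\cG\inv m\in\cS(N)^K$ is the general Rockland/Hulanicki statement already cited. By Proposition~\ref{indipendence} the conclusion is independent of the choice of $\cD$. The higher dimension of $\fw$ enters only through multi-index book-keeping in the Taylor expansion, which is why the required modifications to \cite{ADR1} are only notational and the main obstacle — smooth matching across the stratum $\zeta=0$ — is resolved by the same mechanism as in the Heisenberg case.
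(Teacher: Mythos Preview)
Your proposal is correct and follows exactly the approach the paper indicates: the paper itself omits the proof, stating only that ``the argument used in Section 5 of \cite{ADR1} also allows to obtain the following statement\ldots which only requires minor modifications,'' and your outline is precisely a fleshing-out of those modifications (Fourier transform in $\fw$, Taylor expansion in $\zeta$ at the singular stratum $\zeta=0$, Hulanicki/Rockland control of the remainder). Your remark that the higher dimension of $\fw$ enters only through multi-index book-keeping is the content of the ``minor modifications'' the paper alludes to.
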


As we have already mentioned in the introduction, one of the two implications which constitute property (S) is a matter of functional calculus on Rockland operators on graded groups (i.e., with a graded Lie algebra), due to the following equivalence for $K\in L^1(N)^K$:
$$
g(D_1,\dots,D_d)f=f*K\ \Longleftrightarrow\ \cG K=g_{|_{\Sigma_\cD}}\ .
$$

It was proved in \cite{H} that if $L$ is a Rockland operator and $g$ is a Schwartz function on the line, then the operator $g(L)$ is given by convolution with a Schwartz kernel. This statement has been later extended to commuting families of $d$ Rockland operators and  $g\in\cS(\bR^d)$, cf.~\cite{Ven} and  \cite[Th. 5.2]{ADR2}.  

Since on any nilpotent Gelfand pair, we always have a system $\cD$ consisting of Rockland operators \cite{ADR2} and 
this has the following consequence.

\begin{theorem}[\cite{ADR2, FR}]\label{hulanicki}
Let $(N,K)$ be a nilpotent Gelfand pair, and $\cD$, $\Sigma_\cD\subset\bR^d$  as  above. Given any Schwartz function $g$ on $\bR^d$, there is a $K$-invariant Schwartz function $F$ on $N$, depending continuously on $g$, such that $\cG F=g_{|_{\Sigma_\cD}}$.
\end{theorem}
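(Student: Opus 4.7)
The plan is to assemble the statement from three ingredients already gathered in this section: the existence of a generating tuple $\cD=(D_1,\dots,D_d)$ of $K$-invariant symmetric Rockland operators for $\bD(N)^K$ (asserted in \cite{ADR2}), the multivariate functional-calculus theorem of Veneruso--\cite{ADR2} producing a Schwartz convolution kernel for $g(\cD)$ whenever $g\in\cS(\bR^d)$, and the identification of $\Sigma_\cD$ with the joint $L^2$-spectrum of $\cD$ recalled in Section~\ref{subsubsec_sph+cG}. Since the $D_j$ are essentially self-adjoint and pairwise commute, they strongly commute, so a joint spectral resolution and a multivariate Borel functional calculus $g\mapsto g(\cD)$ on $L^2(N)$ are available.

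Next I would apply the cited extension of Hulanicki's theorem: for each $g\in\cS(\bR^d)$ there is $F=F_g\in\cS(N)$, depending continuously on $g$ in the Fr\'echet topologies, with
\[
g(\cD)f \;=\; f*F\qquad \text{for all } f\in\cS(N).
\]
Since every $D_j$ commutes with the $K$-action on $N$, the spectrally-defined $g(\cD)$ commutes with it too, and uniqueness of the right-convolution kernel then forces $F$ to be $K$-invariant. The continuous dependence $g\mapsto F_g$ is part of the cited statement.

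To finish, I would verify $\cG F=g|_{\Sigma_\cD}$. For any $f\in\cS(N)^K$ and any bounded spherical function $\ph$ with eigenvalues $\xi(\ph)\in\Sigma_\cD$, the multiplicativity of $\cG$ combined with the convolution identity gives
\[
\cG f(\ph)\,\cG F(\ph)\;=\;\cG(f*F)(\ph)\;=\;\cG\bigl(g(\cD)f\bigr)(\ph)\;=\;g(\xi(\ph))\,\cG f(\ph),
\]
where the last equality reflects the fact that $\ph$ is a joint eigenfunction of the self-adjoint extensions of the $D_j$, so convolution on the right by $F$ acts on the spherical side as multiplication by $g\circ\xi$. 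Choosing $f$ with $\cG f(\ph)\neq 0$ (which can always be done, e.g.\ by translating and dilating an approximate identity and using injectivity of $\cG$) yields $\cG F(\ph)=g(\xi(\ph))$ at the given $\ph$, and hence on all of $\Sigma$.

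The main obstacle is really the multivariate Hulanicki-type theorem invoked in the second step; that is the serious analytic input, quoted from Veneruso and \cite{ADR2}. The remaining work is bookkeeping, save for the small subtlety that the spectral identity $g(\cD)f=f*F$ is an equality of $L^2$-operators that needs to be upgraded to an identity of Schwartz functions on $N$---which is precisely what the cited theorem delivers, together with the continuous dependence on $g$.
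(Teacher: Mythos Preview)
Your proposal is correct and follows exactly the line the paper sketches: the paper does not give a self-contained proof of this theorem but simply cites \cite{ADR2,FR}, having noted just before the statement that (a) one can choose $\cD$ to consist of Rockland operators, (b) the multivariate Hulanicki--Veneruso theorem then produces a Schwartz convolution kernel $F$ for $g(\cD)$, and (c) the equivalence $g(\cD)f=f*F\Leftrightarrow \cG F=g|_{\Sigma_\cD}$ holds. You have filled in the details of this outline, including the $K$-invariance of $F$ and the verification of $\cG F=g|_{\Sigma_\cD}$; the only minor imprecision is that ``translating an approximate identity'' need not preserve $K$-invariance---just take a $K$-invariant approximate identity $f_\eps$, for which $\cG f_\eps(\ph)\to\ph(0)=1$.
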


In other words,  the continuous inclusion
\begin{equation}\label{inclusion}
\cS(\Sigma_\cD)\subseteq \cG\big(\cS(N)^K\big)
\end{equation}
holds for any nilpotent Gelfand pair.
As $\cG$ is linear and one-to-one, 
the open mapping theorem for Fr\'echet spaces \cite{T} implies that
the proof of property (S)  reduces to proving the opposite inclusion, 
 i.e., that the spherical transform of any function in $\cS(N)^K$,
viewed as a function on $\Sigma_\cD$, admits a Schwartz extension to~$\bR^d$.

\medskip

By Proposition \ref{indipendence}, 
property (S) is independent of the choice of $\cD$.
Convenient choices of $\cD$ will be obtained by applying the symmetrisation mapping to polynomials on $\fn$.

\subsection{Symmetrisation}
\label{subsec_sym}

In this section, we set some notation and recall some properties of the symmetrisation mapping.

The symmetrisation mapping can be defined for any Lie group $N$
and is completely independent of the notion of nilpotent Gelfand pair.
Denoting by $\fn$ the Lie algebra of $N$, 
the  symmetrisation mapping $\la=\la_N$
is the unique linear bijection 
 from the symmetric algebra $\fS(\fn)$ onto the universal enveloping algebra $\fU(\fn)$
 which satisfies the identity $\la(X^n)=X^n$ for every $X\in\fn$ and $n\in\bN$ \cite[Theorem 4.3 in Ch.II]{He2}. 
 The symmetric algebra $\fS(\fn)$ may be viewed as the space $\cP(\fn^*)$ of polynomials on the dual space $\fn^*$, 
and, after identifying $\fn^*$ and $\fn$, 
as the the space $\cP(\fn)$ of polynomials on $\fn$.  
When  the elements of $\fU(\fn)$ are regarded as left-invariant differential operators on $N$, we use the notation $\bD(N)$.

Following \cite[Sect. 2.2]{FRY1}, we will use a modified symmetrisation $\la'_N:\cP(\fn)\longrightarrow\bD(N)$, which maps the polynomial $p\in\cP(\fn)$ to the differential operator
\begin{equation}\label{modsym}
\la'(p)F=p(i\inv\nabla_{v'},i\inv\nabla_{w'})_{|_{v'=w'=0}} F\big((v,w)\cdot(v',w')\big)\ .
\end{equation}
i.e., $\la'(p)=\la\big(p(i\inv\cdot)\big)$, in terms of the standard symmetrisation $\la$.
Hence only constants are changed with this modification but 
 its advantage is that  polynomials with real coefficients are transformed by $\la'$ into formally self-adjoint differential operators\footnote{The first two authors take this opportunity to correct an error in the formulation of Proposition 3.1 in \cite{FR}: it applies to operators $D=\la'(p)$ with $p$ real.}.
 When it is necessary to specify the group $N$, we write $\la'_N$ instead of $\la'$.

In the next lemma, we summarise some properties of $\lambda'$ which are readily checked, like 
the compatibility with the usual and homogeneous degrees for polynomials and differential operators (Parts \eqref{item_lem_sym_degree} and \eqref{item_lem_sym_hom} resp.),
with the action of a compact group (Part \eqref{item_lem_sym_K}).
We also give some weak properties on symmetrisation of product of polynomials  in Parts \eqref{item_lem_sym_degree}, \eqref{item_lem_sym_hom} and
\eqref{item_lem_sym_pdtz}.

\begin{lemma}
\label{lem_sym}
We consider the symmetrisation $\lambda'$ on a Lie group $N$ as above. 
It is a linear isomorphism from the space $\cP(\fn)$ onto $\bD(N)$
which satisfies the following property:
\begin{enumerate}
\item 
\label{item_lem_sym_degree}
The degree of  $p\in \cP(\fn)$  is equal to the order of 
$\lambda'(p)$.
Furthermore, if $p_1,p_2\in \cP(\fn)$ are polynomials of degree $d_1,d_2$ respectively, 
 the differential operator $\lambda'(p_1p_2)-\lambda'(p_1)\lambda'(p_2)$ is of order $<d_1+d_2$.
\item 
\label{item_lem_sym_hom}
 With respect to the group dilations defined in \eqref{eq_gr_dil}, 
the homogeneous degree of the polynomial $p\in \cP(\fn)$  is equal to the homogeneous degree of the  differential operator $\lambda'(p)$.
 Furthermore, if $p_1,p_2\in \cP(\fn)$ have homogeneous degree $d_1,d_2$ respectively, 
 the differential operator $\lambda'(p_1)\lambda'(p_2)$ is homogeneous of order $d_1+d_2$.
\item 
\label{item_lem_sym_K}
Let $K$ be a compact group which acts by automorphism on the group $N$.
The map $\lambda':\cP(\fn) \to \bD(N)$ is equivariant for the natural actions of $K$, that is, 
$$
\lambda'(p\circ k)\ (f) = \lambda'(p) \ (f\circ k^{-1} ), 
\quad p\in \cP(\fn), \ k\in K, \ f\in \cC^\infty(N).
$$
Consequently, if the polynomial $p$ is $K$-invariant, then 
the operator $\lambda'(p)$ is $K$-invariant.
\item
\label{item_lem_sym_pdtz}
Let $\fz$ be the centre of the Lie algebra $\fn$.
 If $p_1\in \cP(\fz)$ and $p_2\in \cP(\fn)$ then 
$$
\lambda'(p_1p_2)=\lambda'(p_1)\lambda'(p_2)
\quad\mbox{and}\quad
\lambda'(p_1)=p_1(i\inv\nabla_{\fz}).
$$
\end{enumerate}
\end{lemma}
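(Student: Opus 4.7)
The plan is to deduce all four properties from the corresponding properties of the standard symmetrisation $\lambda:\cP(\fn)\to\bD(N)$, since $\lambda'(p)=\lambda(p(i^{-1}\cdot))$ and the rescaling $p\mapsto p(i^{-1}\cdot)$ is a linear automorphism of $\cP(\fn)$ that preserves the ordinary degree, the homogeneous degree, the $K$-action, and the subspace $\cP(\fz)\subset\cP(\fn)$. That $\lambda$ is a linear bijection $\cP(\fn)\to\bD(N)$ is the Poincar\'e--Birkhoff--Witt theorem, so $\lambda'$ is as well. Throughout I would fix an ordered basis of $\fn$ consisting of vectors homogeneous under \eqref{eq_gr_dil} (weight $\tfrac12$ on $\fv$, weight $1$ on $\fw$), with the generators of $\fz$ listed first.

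For part (1), the PBW formula $\lambda(Y_{i_1}\cdots Y_{i_d})=\frac{1}{d!}\sum_{\sigma\in S_d}Y_{i_{\sigma(1)}}\cdots Y_{i_{\sigma(d)}}$ produces an element of $\bD(N)$ of order exactly $d$ whose principal symbol is the original monomial; by linearity this gives $\deg p=\mathrm{ord}\,\lambda(p)$. The product formula is equivalent to the assertion that $\lambda(p_1p_2)$ and $\lambda(p_1)\lambda(p_2)$ have the same leading symbol, both equal to $p_1p_2$: reordering the factors of $\lambda(p_1)\lambda(p_2)$ to reconstruct $\lambda(p_1p_2)$ requires commutators $[Y_i,Y_j]\in\fn$, each of which strictly drops the order by one. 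Part (2) is parallel: every basis vector in $\fv$ (respectively $\fw$) has homogeneous degree $\tfrac12$ (respectively $1$) both as a coordinate function and as a left-invariant vector field, so $\lambda$ preserves homogeneous degree on monomials and hence on homogeneous polynomials; the homogeneity of a product of homogeneous operators is immediate from the definition of the dilations.

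For part (3), apply the defining formula \eqref{modsym} to $p\circ k$ and use the automorphism identity $k((v,w)\cdot(v',w'))=k(v,w)\cdot k(v',w')$ together with the change of variables $(v',w')\mapsto k^{-1}(v',w')$ and the chain rule. Part (4) starts from the observation that for any central $z\in\fz$ one has $(v,w)\cdot z=(v,w)+z$ in vector-space notation, since the Baker--Campbell--Hausdorff correction $\tfrac12[v,z_\fv]$ vanishes by centrality of $z_\fv$. Plugging this into \eqref{modsym} yields $\lambda'(p_1)F=p_1(i^{-1}\nabla_\fz)F$ for $p_1\in\cP(\fz)$. For the multiplicativity, it suffices to handle monomials: inserting a central $Z\in\fz$ into a monomial $m$ gives $\lambda(Zm)=Z\lambda(m)$, because $Z$ commutes with every factor of $m$ and so all orderings in the symmetric sum collapse. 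Iterating this on the central factors of $p_1$ proves $\lambda(p_1p_2)=\lambda(p_1)\lambda(p_2)$ for $p_1\in\cP(\fz)$, $p_2\in\cP(\fn)$.

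The only nontrivial point is the commutator bookkeeping underlying part (1): one must verify cleanly that every PBW reordering of a product of two symmetrised polynomials produces corrections of strictly lower total order, for instance by working in the associated graded $\bigoplus_d \bD^d(N)/\bD^{d-1}(N)$ and identifying it with $\fS(\fn)$ via the symbol map. Everything else reduces to direct substitution into \eqref{modsym} and use of the step-$\le 2$ Baker--Campbell--Hausdorff formula.
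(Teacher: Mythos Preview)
Your proof is correct. The paper itself does not give a proof of this lemma; it states explicitly that ``The proof of Lemma~\ref{lem_sym} is left to the reader.'' Your approach---reducing to the standard symmetrisation $\lambda$ via the rescaling $p\mapsto p(i^{-1}\cdot)$, invoking PBW for the bijection and the symbol map for the order/degree statements, and handling part~(4) by exploiting centrality in the BCH formula---is exactly the routine verification the authors have in mind.
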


The proof of Lemma \ref{lem_sym} is left to the reader.

\medskip

Convenient choices of $\cD$ will be obtained by applying $\lambda'$ to certain families of polynomials on $\fn$, as explained in the next section.

\subsection{Hilbert bases}
\label{subsec_hilbert_base}

Let $(N,K)$ be a nilpotent Gelfand pair.

We say that a polynomial $p\in\cP(\fn)$ has {\it bi-degree} $(r,s)$ if $p\in\cP^r(\fv)\otimes\cP^s(\fw)$. Notice that the homogeneous degree of $p$ is then $\nu=\frac r2+s$.

By a {\it  bi-homogeneous Hilbert basis} of $(\fn, K)$, we mean a $d$-tuple ${\boldsymbol\rho}=(\rho_1,\dots, \rho_d)$ of real, $K$-invariant polynomials on $\fn$ generating the $K$-invariant polynomial algebra $\cP(\fn)^K$ over $\fn$ and with each $\rho_j$   has bi-degree $(r_j,s_j)$.

We set $D_j=\la'(\rho_j)$ and $\cD=(D_1,\dots,D_d)$.  Then $\cD$ generates $\bD(N)^K$ and each $D_j$ is homogeneous of degree $\nu_j=r_j+2s_j$ with respect to the group dilations.
It is not difficult to see that if $\ph\in\Sigma$, then also $\ph^\del(v,t)=\ph\big(\del\cdot(v,w)\big)$ is a bounded spherical function for every $\del>0$. 
Consequently we have:
 $$
 \xi(D_j,\ph^\del)=\del^{\nu_j}\xi(D_j,\ph)\ .
 $$
 
Hence $\Sigma_\cD$ is invariant under the anisotropic dilations on $\bR^d$
\begin{equation}\label{dilations}
\xi=(\xi_1,\dots,\xi_d)\longmapsto \big(\del^{\nu_1}\xi_1,\dots,\del^{\nu_d}\xi_d\big)=D(\del)\xi\ ,\qquad (\del>0)\ .
\end{equation}

\medskip

We split $\fw$ as
\begin{equation}\label{z_0}
\fw=\fw_0\oplus\check\fw\ ,
\end{equation}
where $\check\fw$ denotes the subspace of $K$-fixed elements of $\fw$, and  $\fw_0$ its (unique) $K$-invariant complement in $\fw$. 

We will privilege bi-homogeneous Hilbert bases ${\boldsymbol\rho}$  which split as ${\boldsymbol\rho}=({\boldsymbol\rho}_{\fw_0},{\boldsymbol\rho}_{\check\fw},{\boldsymbol\rho}_\fv,{\boldsymbol\rho}_{\fv,\fw_0})$
\begin{enumerate}
\item[(i)] ${\boldsymbol\rho}_{\fw_0}$ is a homogeneous Hilbert basis  of $\cP(\fw_0)^K$; 
\item [(ii)] ${\boldsymbol\rho}_{\check\fw}$ is any system  of coordinate functions on $\check\fw$;
\item[(iii)] ${\boldsymbol\rho}_\fv$ is a homogeneous Hilbert basis  of $\cP(\fv)^K$;
\item[(iv)] ${\boldsymbol\rho}_{\fv,\fw_0}$ consists of bi-homogeneous polynomials in $\big(\cP^{r_j}(\fv)\otimes\cP^{s_j}(\fw_0)\big)^K$ with $r_j,s_j>0$.
\end{enumerate}
 The existence of such Hilbert bases is obvious.
We denote by $\cD_{\check\fw}$, $\cD_{\fw_0}$, $\cD_\fv$, $\cD_{\fv,\fw_0}$  the families of operators corresponding to the corresponding subfamilies of polynomials in ${\boldsymbol\rho}$ 
via the symmetrisation mapping $\la'$.

Denoting by $d_{\fw_0}$, $d_{\check\fw}$, $d_\fv$, $d_{\fv,\fw_0}$  the number of elements in each subfamily, we split $\bR^d$ as
$$
\bR^d=\bR^{d_{\fw_0}}\times\bR^{d_{\check\fw}}\times\bR^{d_{\fv}}\times\bR^{d_{\fv,\fw_0}}\ ,
$$
and set
$$
\xi(\ph)=\big(\xi_{\fw_0}(\ph),\xi_{\check\fw}(\ph),\xi_\fv(\ph),\xi_{\fv,\fw_0}(\ph)\big)\ ,
$$
for the eigenvalues of a function $\ph\in \Sigma(N,K)$
for $\cD$.

Whenever a unified notation for all invariants on $\fw$ is preferable, we we will use the symbols ${\boldsymbol\rho}_\fw$, $d_\fw$, $\xi_\fw(\ph)$, etc.

In Section \ref{special},  we will need a different splitting of the family ${\boldsymbol\rho}$, which takes into account the degrees of polynomials in the $\fv$-variables  (in the ordinary sense).
Then ${\boldsymbol\rho}_{(k)}$ will denote the (possibly empty) set of elements which have degree $k$ in the $\fv$-variables. 
In particular, ${\boldsymbol\rho}_{(0)}={\boldsymbol\rho}_\fw$.
 In  this situation, the elements of ${\boldsymbol\rho}_{(k)}$ will be labelled as $(\rho_{k,1},\dots,\rho_{k,d_{(k)}})$.
 Accordingly,
we  split $\cD$ and the corresponding set of eigenvalues as 
$\cD=(\cD_{(k)})_{k\ge0}$
and $\xi=(\xi_{(k)})_{k\ge0} \in \bR^d$.

\section{ 
Hereditarity of property (S)}
\label{sec_procedure}

In this section, 
we list certain procedures   on the groups $N$ and/or $K$ which, if applied to pairs satisfying property (S), produce new ones also satisfying property (S). 
These operations are:
 normal extensions of $K$, direct products of pairs,
 central reductions\footnote{The expression ``central reduction'' is kept, here and in the sequel, as in \cite{V1} for linguistic convenience, though it only refers to subspaces of the derived algebra $\fw$. Nonetheless, Proposition \ref{central} still holds if $\fs$ is taken as a subspace of $\fz$.} of $N$. They are  stated below 
 in  Propositions  \ref{normal}, \ref{product} , and \ref{central}
 respectively.
 
  \begin{proposition}[Normal extension of $K$]\label{normal}
Let $K^\#$ be a compact group of automorphisms of a nilpotent Lie group $N$, and $K$ a normal subgroup of $K^\#$. 

If $(N,K)$ is a nilpotent Gelfand pair satisfying property (S), 
then $(N,K^\#)$ is also a nilpotent Gelfand pair satisfying property (S).
\end{proposition}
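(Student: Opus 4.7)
The plan is to reduce property (S) for $(N,K^\#)$ to property (S) for $(N,K)$ by averaging over the compact quotient group $G:=K^\#/K$, which acts on all the natural objects (polynomials, differential operators, Schwartz functions, spherical functions) precisely because $K$ is normal in $K^\#$. The Gelfand pair property for $(N,K^\#)$ is immediate, since $L^1(N)^{K^\#}$ is a subalgebra of the commutative algebra $L^1(N)^K$. By Proposition~\ref{indipendence}, for property (S) we are free to choose any convenient system $\cD^\#$ of generators of $\bD(N)^{K^\#}$.

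First I would fix a $K^\#$-invariant inner product on $\fn$ (so that the complement $\fv$ of $\fw$ is $K^\#$-stable) and a bi-homogeneous Hilbert basis $\rho$ of $\cP(\fn)^K$ whose $\bR$-linear span is $G$-stable; this can be arranged by replacing each initial generator by a basis of the finite-dimensional $G$-submodule it generates in its bi-degree. Split $\rho=(\rho^{\rm fix},\rho')$ into the $G$-fixed and $G$-translated parts, set $r=|\rho^{\rm fix}|$, $d'=|\rho'|$, and $\cD=\la'(\rho)=(\cD^{\rm fix},\cD')$; these generate $\bD(N)^K$, and $G$ acts on $\bR^d$ via a linear representation that fixes the first $r$ coordinates, is orthogonal on the last $d'$, and preserves $\Sigma_\cD$. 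Let $q=(q_1,\dots,q_m)$ be a real homogeneous Hilbert basis of $\cP(\bR^{d'})^G$. Since any $G$-invariant polynomial in $\rho$ decomposes as a polynomial in $\rho^{\rm fix}$ whose coefficients are $G$-invariant polynomials in $\rho'$, hence polynomials in $q(\rho')$, the tuple $(\rho^{\rm fix},q(\rho'))$ generates $\cP(\fn)^{K^\#}$. An induction on the order of differential operators, using Lemma~\ref{lem_sym}\,(1), then shows that $\cD^\#:=(\cD^{\rm fix},q(\cD'))$ generates $\bD(N)^{K^\#}$, and the Gelfand spectrum of $(N,K^\#)$ embeds in $\bR^{r+m}$ as $\Theta=\{(\xi^{\rm fix}(\ph),q(\xi'(\ph)))\,:\,\ph\in\Sigma(N,K)\}$.

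Now fix $F\in\cS(N)^{K^\#}\subset\cS(N)^K$. Property (S) for $(N,K)$ provides $g\in\cS(\bR^d)$ with $\cG_K F=g_{|\Sigma_\cD}$. A change of variable in the spherical transform shows $\cG_K F(\ph\circ k)=\cG_K F(\ph)$ for all $k\in K^\#$, so $\cG_K F$ is $G$-invariant on $\Sigma$; averaging over $G$ therefore yields a $G$-invariant Schwartz extension $\bar g$ on $\bR^d$ with $\bar g_{|\Sigma_\cD}=\cG_K F$. By Schwarz's theorem~\cite{Schw}, applied to the $G$-action on $\bR^{d'}$ with the $\bR^r$-coordinates as parameters, there exists $H\in\cS(\bR^{r+m})$ with $\bar g(\xi^{\rm fix},\xi')=H(\xi^{\rm fix},q(\xi'))$. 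The key observation is that $\cG_{K^\#}F(\ph^\#)=\cG_K F(\ph)$ whenever $\ph^\#$ is the $G$-average of $\ph$, and the embedding of $\ph^\#$ in $\bR^{r+m}$ via $\cD^\#$ is exactly $(\xi^{\rm fix}(\ph),q(\xi'(\ph)))$; therefore $\cG_{K^\#}F=H_{|\Theta}$ belongs to $\cS(\Theta)$. This establishes property (S) for $(N,K^\#)$ with respect to $\cD^\#$, and hence for every choice of generators by Proposition~\ref{indipendence}.

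The main technical point is the compatible choice of generators: $\rho$ must lie in a $G$-stable subspace so that $G$ acts linearly on $\bR^d$, and $q$ must be introduced so that $\cD^\#=(\cD^{\rm fix},q(\cD'))$ actually generates $\bD(N)^{K^\#}$. This rests on the standard structure of Hilbert bases for compact group actions combined with the filtration compatibility of symmetrisation from Lemma~\ref{lem_sym}. Once this setup is in place, everything else is a matter of $G$-averaging and Schwarz's extension theorem applied to the already-established property (S) for $(N,K)$.
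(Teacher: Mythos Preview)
Your argument is correct and follows the same route as the proof in \cite{FRY1} to which the paper defers: choose a $G=K^\#/K$-stable Hilbert basis so that $G$ acts linearly on $\bR^d$ and on $\Sigma_\cD$, average the Schwartz extension coming from property (S) for $(N,K)$ over $G$, and descend to $\bR^{r+m}$ via G.~Schwarz's theorem (in its Schwartz-space form, cf.~\cite{ADR2}). The one step you use without justification is that every bounded $K^\#$-spherical function is the $G$-average of some bounded $K$-spherical function, i.e.\ that $\Theta=\Sigma_{\cD^\#}$; this follows from the trace formula~\eqref{trace}, since $K_{\zeta,\omega}\triangleleft K^\#_{\zeta,\omega}$ and any irreducible $K^\#_{\zeta,\omega}$-submodule of $\cH_\zeta$ contains an irreducible $K_{\zeta,\omega}$-submodule whose associated $K$-spherical function averages to the given $K^\#$-spherical one.
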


The proof of Proposition \ref{normal} was given in 
\cite{FRY1}  (thereby extending an argument of \cite{ADR2}).

\begin{proposition}[Direct product]\label{product}
If  $(N_1,K_1)$, $(N_2,K_2)$ are two nilpotent Gelfand pairs satisfying (S), 
then their direct product $(N_1\times N_2,K_1\times K_2)$ is also a nilpotent Gelfand pair satisfying (S).
\end{proposition}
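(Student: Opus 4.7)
The plan is to reduce property (S) on the product pair to its known validity on each factor by exploiting the tensor-product structure of every Fréchet space involved.

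\emph{Setup.} First, $(N_1\times N_2,K_1\times K_2)$ is a nilpotent Gelfand pair because $L^1(N_1\times N_2)^{K_1\times K_2}=L^1(N_1)^{K_1}\hat\otimes L^1(N_2)^{K_2}$ is commutative as a tensor product of commutative algebras. Picking generating systems $\cD_i=(D_1^{(i)},\dots,D_{d_i}^{(i)})$ of $\bD(N_i)^{K_i}$, the concatenation $\cD=(\cD_1,\cD_2)$, in which each $D_j^{(i)}$ acts on the $i$-th factor only, generates $\bD(N_1\times N_2)^{K_1\times K_2}$. Since the irreducible unitary representations of $N_1\times N_2$ are outer tensor products, the bounded spherical functions factor as $\varphi(x_1,x_2)=\varphi_1(x_1)\varphi_2(x_2)$, giving
$$
\Sigma_\cD=\Sigma_{\cD_1}\times\Sigma_{\cD_2}\subset\bR^{d_1+d_2},\qquad \cG(F_1\otimes F_2)(\xi_1,\xi_2)=\cG_1F_1(\xi_1)\,\cG_2F_2(\xi_2)
$$
on elementary tensors. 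By Proposition~\ref{indipendence} it suffices to verify (S) for this specific $\cD$.

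\emph{Tensor factorisation.} The Schwartz spaces involved are nuclear Fréchet. Using that the projection $P_1\otimes P_2$, with $P_i$ the continuous $K_i$-averaging on $\cS(N_i)$, realises $(K_1\times K_2)$-averaging on $\cS(N_1)\hat\otimes\cS(N_2)=\cS(N_1\times N_2)$, one obtains $\cS(N_1\times N_2)^{K_1\times K_2}=\cS(N_1)^{K_1}\hat\otimes\cS(N_2)^{K_2}$. On the spectral side, the jointly continuous bilinear ``outer product'' $(f_1,f_2)\mapsto\bigl((x_1,x_2)\mapsto f_1(x_1)f_2(x_2)\bigr)\in\cS(\bR^{d_1+d_2})$ descends to the quotient spaces of Schwartz extensions: if $f_i$ vanishes on $\Sigma_{\cD_i}$ for some $i$, then $f_1(x_1)f_2(x_2)$ vanishes on $\Sigma_\cD$. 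This yields a continuous bilinear map $\cS(\Sigma_{\cD_1})\times\cS(\Sigma_{\cD_2})\to\cS(\Sigma_\cD)$, which by the universal property of the projective tensor product lifts to a continuous linear map
$$
m:\cS(\Sigma_{\cD_1})\hat\otimes\cS(\Sigma_{\cD_2})\longrightarrow\cS(\Sigma_\cD).
$$

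\emph{Assembly and conclusion.} By (S) for each factor, $\cG_i:\cS(N_i)^{K_i}\to\cS(\Sigma_{\cD_i})$ is a Fréchet-space isomorphism, and $\cG_1\hat\otimes\cG_2$ is an isomorphism of the corresponding nuclear Fréchet completions. The composition
$$
\cS(N_1\times N_2)^{K_1\times K_2}\xrightarrow{\ \sim\ }\cS(N_1)^{K_1}\hat\otimes\cS(N_2)^{K_2}\xrightarrow{\cG_1\hat\otimes\cG_2}\cS(\Sigma_{\cD_1})\hat\otimes\cS(\Sigma_{\cD_2})\xrightarrow{\ m\ }\cS(\Sigma_\cD)
$$
coincides with $\cG$ on elementary tensors by the factorisation recorded above. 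Since both the composition and $\cG$ are continuous into $C_0(\Sigma_\cD)$, they agree on the full space by density of the algebraic tensor product, so $\cG$ sends $\cS(N_1\times N_2)^{K_1\times K_2}$ continuously into $\cS(\Sigma_\cD)$. The opposite inclusion is Theorem~\ref{hulanicki}, and the open mapping theorem for Fréchet spaces then upgrades the resulting continuous bijection to the desired topological isomorphism. The substantive work is concentrated in the nuclearity package of the second step—identifying the $(K_1\times K_2)$-invariants of a completed tensor product and checking the clean descent of the outer product to the quotient spaces $\cS(\Sigma_{\cD_i})$—after which the argument is formal.
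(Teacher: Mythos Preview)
Your proof is correct and takes a genuinely different route from the paper. You argue abstractly via nuclearity: identify $\cS(N_1\times N_2)^{K_1\times K_2}$ with $\cS(N_1)^{K_1}\hat\otimes\cS(N_2)^{K_2}$, push through the isomorphisms $\cG_i$ given by hypothesis~(S) on each factor, and land in $\cS(\Sigma_\cD)$ via the multiplication map $m$; density of the algebraic tensor product together with continuity into $C_0(\Sigma_\cD)$ then pins down the composition as $\cG$. The paper instead works by explicit decomposition: Lemma~\ref{lemma_product} (a localized Fourier-series construction) expresses any $F\in\cS(N)^K$ as $\sum_{l,m}c_{l,m}\tilde H^{(1)}_{l_1,m_1}\otimes\tilde H^{(2)}_{l_2,m_2}$, with rapidly decaying scalar coefficients and $K_\nu$-invariant building blocks whose Schwartz norms grow only polynomially in $(l,m)$; property~(S) on each factor then gives Schwartz extensions $h^{(\nu,M)}_{l_\nu,m_\nu}$ of the transforms of the blocks, and a diagonal selection of the norm level $M$ along the index set $(l,m)$ assembles these into a single Schwartz extension of $\cG F$. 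Your approach is shorter and more conceptual, at the cost of invoking the full nuclear Fr\'echet tensor-product package; the paper's argument is more elementary and self-contained, and its explicit norm bookkeeping could in principle be reused in contexts where the abstract tensor identifications are unavailable or where quantitative control is needed.
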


\begin{proposition}[Central reduction]\label{central}
Let $(N,K)$ be a nilpotent Gelfand pair.
If $\fs$ is a  $K$-invariant subspace of $\fw$,
 denote by $N'$ the quotient group with Lie algebra $\fn / \fs$.

If the pair $(N,K)$ satisfies property (S) then $(N',K)$ is also a nilpotent Gelfand pair  satisfying property~(S).
 \end{proposition}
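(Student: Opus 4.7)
The strategy is to lift a $K$-invariant Schwartz function on $N'$ to a $K$-invariant Schwartz function on $N$ by tensoring with a Schwartz density on $\fs$, apply property~(S) for $(N,K)$, and then restrict the resulting Schwartz extension to a coordinate slice corresponding to $\Sigma'_{\cD'}$. Since $\fs\subset\fw\subset\fz$ is $K$-invariant and central, $S=\exp\fs$ is a central closed subgroup, so $N'=N/S$ is nilpotent, connected, simply connected, and carries a $K$-action by automorphisms; that $(N',K)$ is a nilpotent Gelfand pair follows from condition~(iii) of Definition~\ref{nGp}, as irreducible unitary representations of $N'$ are exactly the irreducible unitary representations of $N$ trivial on $S$, with the same $K$-stabilisers and the same multiplicity-free decompositions.

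I would next choose a $K$-invariant complement $\fw''$ of $\fs$ in $\fw$ and identify $\fn=\fn'\oplus\fs$ with $\fn'=\fv\oplus\fw''$. Starting from a bi-homogeneous Hilbert basis ${\boldsymbol\rho}'$ of $\cP(\fn')^K$, lift it to a family $\tilde{\boldsymbol\rho}\subset\cP(\fn)^K$ of polynomials independent of $\fs$. Any $\rho\in\cP(\fn)^K$ splits as $\rho|_{\fs=0}+(\rho-\rho|_{\fs=0})$, the first summand being a polynomial in $\tilde{\boldsymbol\rho}$ and the second vanishing on $\{\fs=0\}$; hence $\tilde{\boldsymbol\rho}$ can be completed to a Hilbert basis ${\boldsymbol\rho}=\tilde{\boldsymbol\rho}\cup{\boldsymbol\rho}_\sharp$ of $\cP(\fn)^K$ with every $\rho\in{\boldsymbol\rho}_\sharp$ satisfying $\rho|_{\fs=0}\equiv 0$. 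Set $\cD=\lambda'_N({\boldsymbol\rho})$, $\cD'=\lambda'_{N'}({\boldsymbol\rho}')$, $d=\#{\boldsymbol\rho}$, and $d'=\#{\boldsymbol\rho}'$. Writing each $\rho$ as $\sum_\alpha s^\alpha q_\alpha(v,w'')$ and applying Lemma~\ref{lem_sym}\eqref{item_lem_sym_pdtz} (using $\fs\subset\fz$) yields $\lambda'_N(\rho)=\sum_\alpha(i\inv\de_s)^\alpha\lambda'_N(q_\alpha)$; hence $\lambda'_N(\rho)$ annihilates $S$-invariant smooth functions whenever $\rho|_{\fs=0}=0$, and for $\tilde\rho\in\tilde{\boldsymbol\rho}$ corresponding to $\rho'\in{\boldsymbol\rho}'$ a direct inspection of \eqref{modsym} shows that $\lambda'_N(\tilde\rho)$ descends, on $S$-invariants, to $\lambda'_{N'}(\rho')$ on their quotients. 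Consequently every bounded spherical function $\ph'$ on $N'$ pulls back to an $S$-invariant bounded spherical function $\ph$ on $N$ with eigenvalues $\xi_{\tilde{\boldsymbol\rho}}(\ph)=\xi'(\ph')$ and $\xi_\sharp(\ph)=0$, so under the decomposition $\bR^d=\bR^{d'}\times\bR^{d_\sharp}$ the spectrum $\Sigma'_{\cD'}$ is naturally embedded as $\Sigma_\cD\cap\{\xi_\sharp=0\}$.

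Given $F'\in\cS(N')^K$, fix a $K$-invariant $\chi\in\cS(\fs)$ with $\int_\fs\chi\,ds=1$ (obtained by averaging any normalised radial Schwartz function on $\fs$ over $K$), and define $F(v,w'',s)=F'(v,w'')\chi(s)\in\cS(N)^K$. For every $S$-invariant spherical function $\ph$ on $N$ with quotient $\ph'$, Fubini's theorem together with the identity $\ph(v,w'',s)=\ph(v,w'',0)$ gives $\cG F(\ph)=\cG'F'(\ph')$. By property~(S) for $(N,K)$, $\cG F$ extends to $g\in\cS(\bR^d)$; then $g'(\xi'):=g(\xi',0)$ belongs to $\cS(\bR^{d'})$ and restricts to $\cG'F'$ on $\Sigma'_{\cD'}$. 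This produces the Schwartz extension required for property~(S) on $(N',K)$, and combining with Theorem~\ref{hulanicki} and the open mapping theorem we conclude that $\cG'$ is a topological isomorphism from $\cS(N')^K$ onto $\cS(\Sigma'_{\cD'})$.

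The principal difficulty is the algebraic preparation: constructing a Hilbert basis adapted to the splitting $\fn=\fn'\oplus\fs$ and extracting from Lemma~\ref{lem_sym}\eqref{item_lem_sym_pdtz} both the vanishing of $\lambda'_N({\boldsymbol\rho}_\sharp)$ on $S$-invariants and the descent of $\lambda'_N(\tilde{\boldsymbol\rho})$ to $\lambda'_{N'}({\boldsymbol\rho}')$. Once this setup is in place, the central reduction is transparent: tensor with $\chi$, invoke property~(S) upstairs, and restrict coordinates.
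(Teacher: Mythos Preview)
Your proof is correct and follows essentially the same approach as the paper's: lift $F'\in\cS(N')^K$ to $F=F'\otimes\chi\in\cS(N)^K$ via a normalised $K$-invariant density on $\fs$, choose compatible Hilbert bases so that $\Sigma'_{\cD'}\times\{0\}\subset\Sigma_\cD$, verify $\cG'F'(\xi')=\cG F(\xi',0)$, apply property~(S) on $(N,K)$, and restrict the resulting Schwartz extension to the coordinate slice $\{\xi_\sharp=0\}$. The paper packages the descent/pull-back steps through the operators $\cR^\fs$ and $\Lambda^\fs$ introduced in Section~\ref{subsec_quotient} (in particular \eqref{opradon*1} and Proposition~\ref{propradon*}), whereas you argue directly via Lemma~\ref{lem_sym}\eqref{item_lem_sym_pdtz} and the action on $S$-invariant functions; the content is the same.
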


In Section \ref{subsec_pf_prop_product}, 
we give the proof of Proposition \ref{product}.
In Section \ref{subsec_quotient}, we introduce some notions attached with quotients of a nilpotent Gelfand pair which will be useful in the proof of Proposition \ref{central}
(given in Section \ref{subsec_pf_prop_central})
as well as in other parts of the paper.

Proposition \ref{central} has been announced in \cite{FRY2} without proof.

\subsection{Proof of Proposition \ref{product}}
\label{subsec_pf_prop_product}\ 
We will need the following property of decompositions of Schwartz functions on the product of two Euclidean spaces.
We use the following Schwartz norms on $\cS(\bR^n)$:
$$
\|f\|_{\cS(\bR^n), M}
=
\|f\|_{M}
=
\sup_{|\alpha|\leq M, x\in \bR^n} (1+|x|)^M |\partial^\alpha f(x)|
\quad , \quad M\in \bN
\ .
$$

\begin{lemma}\label{lemma_product}
Let $n_1$ and $n_2$ be two positive integers. Set $n=n_1+n_2$.

Let also $\psi_\nu$, $\nu=1,2$, be smooth function on $\bR^{n_\nu}$,
supported in $[-1, 1]^{n_\nu}$ and satisfying:
$$
0\leq \psi_\nu\leq 1
\qquad\mbox{and}\qquad
\psi_\nu=1 \;\text{\rm on }\; \big[-\frac 34, \frac34\big]^{n_\nu}
\ .
$$

For $\nu=1,2$, $l_\nu,m_\nu\in\bZ^{n_\nu}$, set
$$
H^{(\nu)}_{l_\nu,m_\nu}(x_\nu)=e^{ix_\nu\cdot m_\nu} 
\psi_\nu(x_\nu+l_\nu)
\ .
$$

Then the following properties hold.
\begin{enumerate}
\item[\rm(a)] Let $\nu=1$ or $2$.
Given $M\in \bN$ there is a constant $C_M>0$ such that,
\begin{equation}
  \label{control_Hlmi}
\forall l_\nu,m_\nu\in \bZ^{n_\nu}\ ,\qquad
\|H_{l_\nu,m_\nu}^{(\nu)}\|_{\cS(\bR^{n_\nu}), M}
\leq C_{M} (1+|l_\nu|+|m_\nu|)^{2M}
\ .  
\end{equation}
\item[\rm(b)] For any function $F\in \cS(\bR^n)$ 
there exist coefficients $c_{l,m}\in \bC$, 
$l=(l_1,l_2)$, $m=(m_1,m_2) \in \bZ^n=\bZ^{n_1}\times\bZ^{n_2}$,
such that
\begin{equation}
  \label{dec_F_sum_lm}
F=\sum_{l,m\in \bZ^n} c_{l,m} H_{l_1,m_1}^{(1)}\otimes H_{l_2,m_2}^{(2)}
\ .  
\end{equation}
\item[\rm(c)] For any $M\in\bN$, there is a constant $C_M$, independent of $F$,
such that the coefficients $c_{l,m}$ satisfy:
\begin{equation}
\label{control_clm}
\forall l,m\in \bZ^n\qquad
|c_{l,m}|\leq C_M \|F\|_{\cS(\bR^n),M} (1+ |l|+|m|)^{-M}
\ .
\end{equation}

\end{enumerate}
\end{lemma}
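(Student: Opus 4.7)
Part (a) is a direct Leibniz computation. For any multi-index $\alpha$ with $|\alpha|\le M$,
$$\partial^\alpha H^{(\nu)}_{l_\nu,m_\nu}(x_\nu)=\sum_{\beta\le\alpha}\binom{\alpha}{\beta}(im_\nu)^\beta\,e^{ix_\nu\cdot m_\nu}(\partial^{\alpha-\beta}\psi_\nu)(x_\nu+l_\nu),$$
which is supported in $x_\nu+l_\nu\in[-1,1]^{n_\nu}$. On that set $|x_\nu|\le|l_\nu|+\sqrt{n_\nu}$, hence $(1+|x_\nu|)^M\le C_M(1+|l_\nu|)^M$, while the derivatives are bounded by $C_M\|\psi_\nu\|_{C^M}(1+|m_\nu|)^M$. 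The elementary inequality $(1+|l_\nu|)(1+|m_\nu|)\le(1+|l_\nu|+|m_\nu|)^2$ then yields \eqref{control_Hlmi}.

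For part (b), the plan is to localize $F$ via a smooth partition of unity and Fourier-expand each piece on a sufficiently large cube. Choose $\chi^{(\nu)}\in C_c^\infty(\bR^{n_\nu})$ supported in $[-3/4,3/4]^{n_\nu}$ with $\sum_{l_\nu\in\bZ^{n_\nu}}\chi^{(\nu)}(x_\nu+l_\nu)=1$, obtained in the standard way by periodizing a nonnegative bump and dividing by its periodization. Set $\chi:=\chi^{(1)}\otimes\chi^{(2)}$ and $F_l(x):=\chi(x+l)F(x)$, so that $F=\sum_l F_l$ is a locally finite sum and each $F_l$ is supported where $\psi_1(x_1+l_1)\psi_2(x_2+l_2)=1$. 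The translate $G_l(y):=F_l(y-l)=\chi(y)F(y-l)$ is smooth and supported in $[-3/4,3/4]^n\subset(-\pi,\pi)^n$, hence admits the absolutely convergent Fourier expansion on $[-\pi,\pi]^n$
$$G_l(y)=\frac{1}{(2\pi)^n}\sum_{m\in\bZ^n}\hat G_l(m)\,e^{iy\cdot m},\qquad \hat G_l(m)=\int_{\bR^n}G_l(y)\,e^{-iy\cdot m}\,dy.$$
Substituting $y=x+l$ and multiplying both sides by $\psi_1(x_1+l_1)\psi_2(x_2+l_2)$---which equals $1$ on the support of $F_l$ and is itself supported in $x+l\in[-1,1]^n\subset(-\pi,\pi)^n$---extends the identity globally to $\bR^n$, since outside the $\psi$-support both sides vanish and inside they agree on the cube of validity of the Fourier series. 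This produces $F_l=\sum_m c_{l,m}H^{(1)}_{l_1,m_1}\otimes H^{(2)}_{l_2,m_2}$ with $c_{l,m}=(2\pi)^{-n}e^{il\cdot m}\hat G_l(m)$; summing over $l$ yields \eqref{dec_F_sum_lm}.

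For part (c), extract decay in $m$ and in $l$ separately. Integration by parts, $(im)^\alpha\hat G_l(m)=\int\partial_y^\alpha G_l(y)e^{-iy\cdot m}dy$ for $|\alpha|\le M$, gives $(1+|m|)^M|\hat G_l(m)|\le C_M\max_{|\alpha|\le M}\|\partial_y^\alpha G_l\|_{L^1}$. On the compact support of $\chi$ we have $(1+|y-l|)\ge c(1+|l|)$ uniformly in $l$, so Leibniz combined with the Schwartz seminorm $\|F\|_{\cS(\bR^n),M}$ yields $\|\partial_y^\alpha G_l\|_{L^\infty}\le C_M\|F\|_{\cS(\bR^n),M}(1+|l|)^{-M}$ for $|\alpha|\le M$. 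Applying the elementary inequality $(1+|l|)(1+|m|)\ge 1+|l|+|m|$ then delivers \eqref{control_clm}.

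The main technical subtlety is reconciling the support $[-1,1]^n$ of the $\psi$-cutoff with the natural period $2\pi$ of the exponentials $e^{ix\cdot m}$ for $m\in\bZ^n$. This is resolved by expanding each shifted piece $G_l$ as a Fourier series on the wider cube $[-\pi,\pi]^n$, which strictly contains both the support of $G_l$ and the support of the $\psi$-factor after the shift, so that the cutoff discards exactly the unwanted periodic extension when the identity is globalized.
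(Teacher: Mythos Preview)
Your proof is correct and follows essentially the same approach as the paper's: both localize $F$ via a partition of unity with bumps supported in $[-3/4,3/4]^{n_\nu}$, Fourier-expand each localized piece on a $2\pi$-periodic cube, and then multiply by the $\psi$-cutoff to globalize. Your version is slightly more explicit (translating to $G_l(y)=F_l(y-l)$ before expanding, and spelling out the integration-by-parts and Leibniz estimates for part~(c)), but the argument is the same in substance.
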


\begin{proof}[Proof of Lemma \ref{lemma_product}]
The inequalities in \eqref{control_Hlmi} are trivially satisfied.
For $\nu=1,2$,
we consider a partition of $\bR^{n_1}$ with the cubes $l_\nu+[-\frac12,\frac12]^{n_\nu}$, $l_\nu\in \bZ_{d_\nu}$, and the partition of unity obtained from a smooth function $\phi_\nu$ on $\bR^{n_\nu}$
supported in $[-\frac 34, \frac34]^{n_\nu}$ such that
$$
0\leq \phi_\nu\leq 1
\qquad,\qquad
\phi_\nu=1 \;\mbox{on}\; [-\frac 14, \frac14]^{n_\nu}
\qquad\mbox{and}\qquad
\forall x_\nu\in \bR^{n_\nu}\quad \sum_{l_\nu\in \bZ^{n_\nu}}\phi_\nu(x_\nu+l_\nu)=1
\ .
$$
In particular, $\phi_\nu=\phi_\nu\psi_\nu$.

For each $l=(l_1,l_2)\in \bZ^{n_1}\times\bZ^{n_2}$,
the function $x=(x_1,x_2)\mapsto F(x) \phi_1(x_1+l_1)\phi_2(x_2+l_2)$
is smooth and supported in $-l+[-\frac 34, \frac34]^n$. 
If $\sum_{m\in \bZ^n} c_{l,m} e^{ i x.m}$ is the Fourier series of its $2\pi$-periodic extension in each variable, it is easy to see that the coefficients $c_{l,m}$ satisfy \eqref{control_clm} for any $M\in \bN$.
We can write
\begin{eqnarray*}
F(x)
&=&
\sum_{(l_1,l_2)\in \bZ^{n_1}\times \bZ^{n_2}}
F(x)\phi_1(x_1+l_1)\phi_2(x_2+l_2) \\
&=&
\sum_{(l_1,l_2)\in \bZ^{n_1}\times \bZ^{n_2}}
F(x)\phi_1(x_1+l_1)\phi_2(x_2+l_2) 
\psi_1(x_1+l_1)\psi_2(x_2+l_2) 
\\
&=&
\sum_{l,m\in \bZ^n}
c_{l,m} e^{ix\cdot m}
\psi_1(x_1+l_1)\psi_2(x_2+l_2) 
\ ,
\end{eqnarray*}
which gives \eqref{dec_F_sum_lm}.
\end{proof}

\begin{corollary}\label{Kinvar}
For $\nu=1,2$, let $K_\nu$ be a compact subgroup of ${\rm GL}_{n_\nu}(\bR)$. For $\nu=1,2$, $l_\nu,m_\nu\in\bZ^{n_\nu}$, there exist $K_\nu$-invariant smooth functions $\tilde H^{(\nu)}_{l_\nu,m_\nu}$ on $\bR^{n_\nu}$ such that the conclusions of Lemma \ref{lemma_product} hold, with $\tilde H^{(\nu)}_{l_\nu,m_\nu}$ in place of $H^{(\nu)}_{l_\nu,m_\nu}$, for $F$ in $\cS(\bR^n)$ and $K_1\times K_2$-invariant.
\end{corollary}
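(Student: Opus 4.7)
My plan is to construct the $\tilde H^{(\nu)}_{l_\nu,m_\nu}$ simply by averaging $H^{(\nu)}_{l_\nu,m_\nu}$ over $K_\nu$, and then to exploit the $K_1\times K_2$-invariance of $F$ in order to transfer the decomposition from Lemma \ref{lemma_product} to these invariant building blocks while keeping the same coefficients.

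Concretely, I would first fix a $K_\nu$-invariant inner product on $\bR^{n_\nu}$ (available by compactness of $K_\nu$) together with the probability Haar measure on $K_\nu$, and set
$$
\tilde H^{(\nu)}_{l_\nu,m_\nu}(x_\nu)\;=\;\int_{K_\nu} H^{(\nu)}_{l_\nu,m_\nu}(k_\nu^{-1} x_\nu)\,dk_\nu.
$$
Each $\tilde H^{(\nu)}_{l_\nu,m_\nu}$ is smooth and $K_\nu$-invariant. The estimate (a) applied to the original $H^{(\nu)}_{l_\nu,m_\nu}$, combined with the $K_\nu$-invariance of the chosen norm on $\bR^{n_\nu}$ and the fact that differentiation commutes with the integration over $K_\nu$, yields the analogue of \eqref{control_Hlmi} for $\tilde H^{(\nu)}_{l_\nu,m_\nu}$, with possibly a different constant $C_M$.

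For conclusion (b), I would apply Lemma \ref{lemma_product} to the $K_1\times K_2$-invariant function $F$ to produce coefficients $c_{l,m}$ satisfying \eqref{dec_F_sum_lm}, and then average that identity under the diagonal action of $K_1\times K_2$. The $K_1\times K_2$-invariance of $F$ gives
$$
F(x_1,x_2)\;=\;\int_{K_1\times K_2}F(k_1^{-1}x_1,k_2^{-1}x_2)\,dk_1\,dk_2,
$$
and inserting the decomposition \eqref{dec_F_sum_lm} into the right-hand side and interchanging the sum with the integral (an exchange legitimated by the bounds (a) and (c), which give absolute convergence in every Schwartz seminorm) rewrites $F$ exactly in the form
$$
F\;=\;\sum_{l,m\in\bZ^n}c_{l,m}\,\tilde H^{(1)}_{l_1,m_1}\otimes\tilde H^{(2)}_{l_2,m_2},
$$
with the same coefficients $c_{l,m}$. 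Conclusion (c) is then inherited verbatim from Lemma \ref{lemma_product}.

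There is essentially no real obstacle to this argument; it is a straightforward averaging of Lemma \ref{lemma_product}. The only minor point that deserves care is that one should work with a $K_\nu$-invariant norm on $\bR^{n_\nu}$ so that the polynomial weight $(1+|x_\nu|)^M$ passes cleanly under the average, but this is harmless since any two norms on a finite-dimensional space are equivalent and the Schwartz seminorms are then merely altered by multiplicative constants.
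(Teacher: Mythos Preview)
Your proof is correct and follows exactly the paper's approach: the paper simply defines $\tilde H^{(\nu)}_{l_\nu,m_\nu}$ as the $K_\nu$-average of $H^{(\nu)}_{l_\nu,m_\nu}$ and declares the rest obvious. You have supplied the details the paper omits, including the justification for interchanging sum and integral and the remark about the $K_\nu$-invariant norm.
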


\begin{proof} Just take as $\tilde H^{(\nu)}_{l_\nu,m_\nu}$ the $K_\nu$-average of $H^{(\nu)}_{l_\nu,m_\nu}$. The conclusion is quite obvious.
\end{proof}

We can now give the proof of Proposition \ref{product}.

\begin{proof}[Proof of Proposition \ref{product}]
We fix two families, $\cD^{(1)}=(D_1^{(1)},\ldots, D_{d_1}^{(1)})$ 
and $\cD^{(2)}=(D_1^{(2)},\ldots,D_{d_2}^{(2)})$, of generators
of $\bD(N_1)^{K_1}$ and $\bD(N_2)^{K_2}$ respectively.
We keep the same notation when the operators $D_j^{(\nu)}$ 
are applied to functions on $N=N_1\times N_2$ by differentiating in the $N_\nu$-variables. 

Then 
$\cD=(D_1^{(1)},\ldots, D_{d_1}^{(1)},D_1^{(2)},\ldots,D_{d_2}^{(2)})$ is a family
of generators of $\bD(N)^K$, $K=K_1\times K_2$,
$$
\Sigma_\cD=\Sigma_{\cD_1}\times\Sigma_{\cD_2}\subset \bR^{d_1}\times\bR^{d_2}\ ,
$$
and, if $\cG_1$, $\cG_2$, $\cG$ are the corresponding Gelfand transforms, then 
$$
\cG(F_1\otimes F_2)=(\cG_1 F_1)\otimes(\cG_2F_2)\ .
$$

Identifying $N_1$ and $N_2$ with their Lie algebras,
we consider the $K_i$-invariant functions $\tilde H_{l_\nu,m_\nu}^{(\nu)}$, $\nu=1,2$ and $l_\nu,m_\nu\in \bZ^{n_\nu}$, 
satisfying the properties of Corollary \ref{Kinvar}.

Given $F\in \cS(N)^K$, we decompose it as
$$
F=\sum_{l,m\in \bZ^n} c_{l,m} \tilde H_{l_1,m_1}^{(1)}\otimes \tilde H_{l_2,m_2}^{(2)}
\ ,
$$
with coefficients $c_{l,m}$ satisfying \eqref{control_clm}.
Then
$$
\cG F=\sum_{l,m\in \bZ^n} c_{l,m}\cG_1 \tilde H_{l_1,m_1}^{(1)}\otimes \cG_2\tilde H_{l_2,m_2}^{(2)}
\ .
$$

Since we are assuming that each $(N_\nu,K_\nu)$ satisfies property (S), given any  $M\in \bN$, there are functions $h_{l_\nu,m_\nu}^{(\nu,M)}\in \cS(\bR^{d_\nu})$, for  $\nu=1,2$ and $l_\nu,m_\nu\in \bZ^{n_\nu}$, and an integer $A_M$ such that
\begin{enumerate}
\item[(i)] $h_{l_\nu,m_\nu}^{(\nu,M)}$ coincides with $\cG_\nu \tilde H_{l_\nu,m_\nu}^{(\nu)}$ on $\Sigma_{\cD_\nu}$,
\item[(ii)] $\|h_{l_\nu,m_\nu}^{(\nu,M)}\|_{\cS(\bR^{d_\nu}),M}\le C_M\|\tilde H_{l_\nu,m_\nu}^{(\nu)}\|_{\cS(N_\nu),A_M}\le C_M(1+|l_\nu|+|m_\nu|)^{2A_M}$.
\end{enumerate}

If we set
$$
h_{l,m}^{(M)}=h_{l_1,m_1}^{(1,M)}\otimes h_{l_2,m_2}^{(2,M)}
\in \bS(\bR^d)\ ,
$$
where $d=d_1+d_2$, we have, for any $l,m\in \bZ^n$, 
\begin{equation}
\label{control_hlmM}
\|h_{l,m}^{(M)}\|_{\cS(\bR^{d}),M}\leq C_M (1+|l|+|m|)^{2A_M}
\ .
\end{equation}

Combining the rapid decay of the coefficients with the polynomial growth \eqref{control_hlmM} of the $M$-th Schwartz norm of the $h_{l,m}^{(M)}$, we obtain that
\begin{equation}\label{B_M-estimate}
\sum_{l,m\in \bZ^n}|c_{l,m}|\|h_{l,m}^{(M)}\|_{\cS(\bR^{d}),M}\le C_M\|F\|_{\cS(N),B_M} \ ,
\end{equation}
with $B_M=2A_M+n+1$.

Given $M_0\in\bN$, we want to construct a Schwartz extension $f^{(M_0)}$ of $\cG F$ whose $M_0$-th Schwartz norm is controlled by a constant, independent of $F$, times $\|F\|_{\cS(N),B_{M_0}}$.

By \eqref{B_M-estimate}, for every $M> M_0$, there exists $a_M=a_{M,M_0,F}\in \bN$ such that
\begin{equation}
\label{series_2-M}
\sum_{\tiny\begin{array}{c}
l,m\in \bZ^n
\\
|l|+|m|\geq a_M
\end{array}
 }|c_{l,m}|\|h_{l,m}^{(M)}\|_{\cS(\bR^{d}),M}
\leq 2^{-M} \| F\|_{\cS(N), B_{M_0}}\ .
\end{equation}

The $a_M$ can be inductively chosen to be non-decreasing.
Then we define $f^{(M_0)}$ as
$$
f^{(M_0)}=
\sum_{\tiny\begin{array}{c}
l,m\in \bZ^n
\\
|l|+|m|< a_{M_0+1}
\end{array}}
 c_{l,m} h_{l,m}^{(M_0)}
 +
 \sum_{M=M_0+1}^\infty \!\!\!\!\!\!\!\!\!
 \sum_{\tiny\begin{array}{c}
l,m\in \bZ^n
\\
a_M\leq |l|+|m|< a_{M+1}
\end{array}}\!\!\!\!\!\!\!\!\!
c_{l,m} h_{l,m}^{(M)}
\ .
$$

Clearly, the series converges on $\Sigma_\cD$ to $\cG F$.
To show that it defines a Schwartz function on all of $\bR^d$, notice that, for every $M_1\in \bN$ with $M_1> M_0$ and every $M\ge M_1$, we have by \eqref{series_2-M} that
$$
\sum_{\tiny\begin{array}{c}
l,m\in \bZ^n
\\
a_M\leq |l|+|m|< a_{M+1}
\end{array} }\!\!\!\!\!\!\!\!\!
|c_{l,m}|\|h_{l,m}^{(M)}\|_{\cS(\bR^{d}),M_1}
\leq \!\!\!\!\!\!\!\!\!
\sum_{\tiny\begin{array}{c}
l,m\in \bZ^n
\\
a_M\leq |l|+|m|
\end{array} }\!\!\!\!\!\!\!\!\!
|c_{l,m}|\|h_{l,m}^{(M)}\|_{\cS(\bR^{d}),M}
\leq 2^{-M} \| F\|_{\cS(N), B_{M_0}}
\ .
$$

This implies that $\|f\|_{\cS(\bR^{d}), M_1}$ is finite. Moreover, combining \eqref{B_M-estimate} and \eqref{series_2-M} together, we have that
$$
\|f^{(M)}\|_{\cS(\bR^d),M_0}\le (C_{M_0}+2^{-M_0})\|F\|_{\cS(N),B_{M_0}}\ ,
$$
as required.
\end{proof}

\subsection{Quotienting by a subspace of $\fw$}
\label{subsec_quotient}

In this section,
we consider quotients in the derived algebra
and define various the objects attached with them
which will be essential in the study of 
quotient pairs later on.

\subsubsection{Push-forward of functions and differential operators}
We consider a general connected and simply connected nilpotent Lie group $N$ of step two, and a decomposition $\fn=\fv\oplus\fw$ of its Lie algebra with $\fw=[\fn,\fn]$.
We consider a non-trivial subspace $\fs$ of $\fw$.

We denote by $\fn'$ the quotient algebra $\fn/\fs$ and by $N'$ the corresponding quotient group. 
If $\fw'$ is a complement of $\fs$ in $\fw$, i.e. $\fw=\fw'\oplus \fs$, 
then $\fn'$ can be regarded as $\fv\oplus\fw'$ with Lie bracket
$$
[v,w]_{\fn'}={\rm proj}[v,w]\ ,
$$
where  ${\rm proj}$ denotes the  projection of $\fw$ onto $\fw'$ along $\fs$.
Note that $[\fn',\fn']_{\fn'}=\fw'$.

For any function $F\in \cS(N)$, we define the function $\cR^\fs F \in \cS(N')$ via
\begin{equation}\label{radon*}
\cR^\fs F(v,w')=\int_\fs F(v,w'+s)\,ds\ .
\end{equation}

For any  $F\in \cS(N)$ and $G\in L^\infty(N')$, we have the identity
$$
\int_{N'}\cR^\fs F(v,w')G(v,w')\,dv\,dw'=\int_N F(v,w)(G\circ{\rm proj})(v,w)\,dv\,dw\ ,
$$
i.e., $\cR^\fs$ is the formal adjoint of the lifting operator from functions on $N'$ to functions on $N$.
Accordingly, given $D\in\bD(N)$, we define $\cR^\fs D\in\bD(N')$ as the operator such that
\begin{equation}\label{opradon*}
\big((\cR^\fs D)G\big)\circ{\rm proj}=D(G\circ{\rm proj})\ , 
\quad G\in \cC^\infty(N').
\end{equation}
Using the formula for the symmetrisation given in 
 \eqref{modsym}, we can check easily that 
\begin{equation}\label{opradon*1}
 D=\la'_N(p)\in\bD(N)
 \ \Longrightarrow \
\cR^\fs D=\la'_{N'}(p_{|_{\fn'}}) \in\bD(N').
\end{equation}

Moreover, for $F,G\in C^\infty(N)$, $D,D_1,D_2\in\bD(N)$,
\begin{equation}\label{composition-radon}
\begin{aligned}
\cR^s(F*_NG)&=(\cR^\fs F)*_{N'}(\cR^\fs G),\\
 \cR^s(DF)&=(\cR^\fs D)(\cR^\fs F),\\
  \cR^s(D_1D_2)&=(\cR^\fs D_1)(\cR^\fs D_2).
  \end{aligned}
\end{equation}

\subsubsection{Action of a compact group}
Keeping  the notation above, we also assume that we are given a compact group $K$ acting by automorphisms on $N$.
Hence we can endow $\fn$ with  a $K$-invariant inner product
and we choose $\fv=\fw^\perp$ in $\fn$ and  $\fw'=\fs^\perp$ in $\fw$.
Then ${\rm proj}$ is the orthogonal projection of $\fw$ onto $\fw'$. 
 We denote by $K'$  the stabiliser of $\fs$ in $K$ and fix Lebesgue measures $dv$, $dw'$,  $ds$ on $\fv$, $\fw'$,  $\fs$, respectively.

One checks readily that if a Schwartz $F$ is $K$-invariant, then $\cR^\fs F$ is also Schwartz and $K'$-invariant, i.e.,
$$
F\in\cS(N)^K\ \Longrightarrow \ \cR^\fs F\in \cS(N')^{K'}.
$$
For differential operators, if $D\in \bD(N)^K$ then $\cR^\fs D\in \bD(N')^{K'}$.

\subsubsection{Case of a nilpotent Gelfand pair}
We continue with the notation above.
Notice that if  $(N,K)$ is a nilpotent Gelfand pair
and if  $K'$ is a subgroup of $K$ which stabilises $\fs$, 
then $(N',K')$ is not necessarily a nilpotent Gelfand pair. For future reference (Section \ref{sec_quotient_pairs}), we state the following sufficient condition.

\begin{lemma}\label{N'K'gelfand}
Assume that $(N,K)$ is a nilpotent Gelfand pair and, for generic $\zeta'\in\fw'$, the stabiliser $K_{\zeta'}$ of $\zeta'$ in $K$ is contained in $K'$. 
Then $(N',K')$ is a nilpotent Gelfand pair.
\end{lemma}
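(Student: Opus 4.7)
The plan is to verify the multiplicity-free condition of Proposition \ref{carcano} for the candidate pair $(N',K')$. The first step is to match the representation-theoretic data for $N$ and $N'$. Since $K$ acts by isometries and $K'$ stabilises $\fs$, it also stabilises $\fw' = \fs^\perp$, and the $\fn'$-bracket is $[v,v']_{\fn'} = \mathrm{proj}\,[v,v']$. For $\zeta' \in \fw'$ one has $\langle \zeta', [v,v']_{\fn'}\rangle = \langle\zeta',[v,v']\rangle$ because $\zeta'\perp \fs$, so the bilinear forms $B'_{\zeta'}$ and $B_{\zeta'}$ coincide and hence $\fr'_{\zeta'} = \fr_{\zeta'}$. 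Moreover, since $d\pi_{\zeta',\omega'}(0,s) = i\langle\zeta',s\rangle I = 0$ for all $s\in\fs$, the representation $\pi_{\zeta',\omega'}$ of $N$ descends to an irreducible representation of $N'$ on the same Hilbert space $\cH_{\zeta'}$; by the uniqueness in Lemma \ref{representations} this descended representation is exactly $\pi'_{\zeta',\omega'}$, and every irreducible representation of $N'$ arises in this way.

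The second step is to identify the stabilisers in $K'$ with those in $K$ for generic parameters. By hypothesis, there is a generic subset $\cU\subseteq \fw'$ on which $K_{\zeta'}\subseteq K'$. For such a $\zeta'$ and any $\omega'\in \fr_{\zeta'}$, since $K_{\zeta',\omega'}\subseteq K_{\zeta'}\subseteq K'$ one obtains
\[
K'_{\zeta',\omega'} = K' \cap K_{\zeta',\omega'} = K_{\zeta',\omega'}.
\]
The unitary intertwining action of this common stabiliser on $\cH_{\zeta'}$ is the same in both settings: an element $k$ satisfying $\pi_{\zeta',\omega'}(kv,kw) = \sigma(k)\pi_{\zeta',\omega'}(v,w)\sigma(k)^{-1}$ automatically satisfies the analogous identity for $\pi'_{\zeta',\omega'}$, since $k$ preserves both $\fw'$ and $\fs$ and the two representations coincide after identifying $(v,w)$ with its image in $N'$.

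Since $(N,K)$ is a nilpotent Gelfand pair, Proposition \ref{carcano} guarantees that $\cH_{\zeta'}$ decomposes into $K_{\zeta',\omega'}$-isotypic components without multiplicities, for every $\zeta'\in\fw'$ and $\omega'\in\fr_{\zeta'}$. Combined with the equality $K'_{\zeta',\omega'} = K_{\zeta',\omega'}$ for $\zeta'\in\cU$, this shows that the representation space of a generic irreducible representation of $N'$ decomposes without multiplicities under the corresponding $K'$-stabiliser. The ``for generic'' direction of Proposition \ref{carcano} then yields that $(N',K')$ is a nilpotent Gelfand pair.

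The main subtle point is the transfer of the multiplicity-free decomposition across the central reduction, which ultimately reduces to the observation that the stabiliser groups, together with their intertwining actions on $\cH_{\zeta'}$, are literally identical in the two settings. This is precisely why the hypothesis $K_{\zeta'}\subseteq K'$ is exactly the right condition: without it one could have extra symmetries in $K$ that leave $\fw'$ invariant only modulo $\fs$ and would demand a finer decomposition on the $N$-side than is needed on the $N'$-side.
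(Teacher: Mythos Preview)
Your proof is correct and follows essentially the same approach as the paper: both arguments verify the multiplicity-free criterion of Proposition~\ref{carcano} for $(N',K')$ by observing that the irreducible representations of $N'$ are precisely the $\pi_{\zeta',\omega}$ with $\zeta'\in\fw'$, and that the hypothesis $K_{\zeta'}\subseteq K'$ forces $K'_{\zeta',\omega}=K_{\zeta',\omega}$ for generic $\zeta'$, so the required decomposition is inherited from $(N,K)$. Your version simply spells out more of the details (equality of radicals, descent of the representation, compatibility of the intertwining action) that the paper leaves implicit.
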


\begin{proof}
For $\zeta'\in\fw'$, the representations $\pi_{\zeta',\omega}$  of $N$ defined in Lemma \ref{representations} factor to $N'$ giving all its irreducible unitary representations. According to Proposition \ref{carcano}, $(N',K')$ is a nilpotent Gelfand pair if and only if, for generic $\zeta',\omega$, the representation space decomposes without multiplicities under the action of $K'_{\zeta'}$. Under the present hypotheses, this condition is satisfied because $K'_{\zeta'}=K_{\zeta'}$. 
\end{proof}

In the case that both $(N,K)$ and $(N',K')$ are nilpotent Gelfand pairs, 
 we can define a map 
(denoted $\Lambda^\fs$ below)
between the  Gelfand spectra.

\begin{proposition}\label{propradon*}
Let $(N,K)$ be a nilpotent Gelfand pair.
We consider a non-trivial subspace $\fs$ of the derived algebra $\fw$ of $\fn$
and a subgroup $K'$ of $K$ which stabilises $\fs$.
Suppose that $(N',K')$ is  a nilpotent Gelfand pair, where $\fn'=\fn/\fs$.

If  $\ph'$ is a bounded spherical function on $N'$,  then 
the function $\Lambda^\fs \ph'$ defined via
$$
\Lambda^\fs\ph'(v,w)=\int_K(\ph'\circ{\rm proj})(kv,kw)\,dk\ ,
\quad (v,w)\in N,
$$
is a bounded spherical function on~$N$. 
This defines a map $\Lambda^\fs:\Sigma(N',K')\to\Sigma(N,K)$
which is continuous.

Denoting by $\cG$ and $\cG'$ the spherical transform of $(N,K)$ and $(N',K')$, 
we have
$$
\forall F\in \cS(N)^K, \quad \forall\ph'\in \Sigma(N',K')\qquad
\cG'(\cR^\fs F)(\ph') = \cG (F)(\Lambda^\fs \ph').
$$

\end{proposition}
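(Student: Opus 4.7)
The plan is to establish the integral identity $\cG F(\Lambda^{\fs}\ph')=\cG'(\cR^{\fs}F)(\ph')$ first and then exploit it to show that $\Lambda^{\fs}\ph'$ is a bounded spherical function on $N$; continuity of $\Lambda^{\fs}$ will then follow from a routine estimate. Note that $\Lambda^{\fs}\ph'$ is bounded by $\|\ph'\|_{\infty}$, continuous, $K$-invariant by construction, and satisfies $\Lambda^{\fs}\ph'(0)=1$ since $\text{proj}(0)=0$ and $\ph'(0)=1$; it only remains to check that it lies in $\Sigma(N,K)$.

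First I would compute $\cG'(\cR^{\fs}F)(\ph')$ directly from the definitions \eqref{radon*} and \eqref{transform}. Applying Fubini in the variables $(v,w',s)$ and using the decomposition $\fw=\fw'\oplus\fs$ to merge $(w',s)$ into $w$, the integral collapses to
$$
\int_{N}F(v,w)\,(\ph'\circ\text{proj})\bigl((v,w)\inv\bigr)\,dv\,dw,
$$
where I use that $\text{proj}:N\to N'$ is a group homomorphism and so intertwines inversion on $N$ and on $N'$. The $K$-invariance of $F$ allows one to replace $F(x)$ by $F(k\inv x)$ for every $k\in K$; after changing variables (the Haar measure on $N$ is $K$-invariant because $K$ acts by automorphisms through a compact subgroup of $\text{GL}(\fn)$) and using $(kx)\inv=k(x\inv)$, one can insert an integration over $K$ inside the integrand. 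This turns $(\ph'\circ\text{proj})(x\inv)$ into $\Lambda^{\fs}\ph'(x\inv)$ and yields the desired identity.

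To conclude that $\Lambda^{\fs}\ph'$ is spherical, I verify the multiplicative identity stated in Section~\ref{subsec_spherical}. For $F_1,F_2\in L^1(N)^K$, the first formula in \eqref{composition-radon} gives $\cR^{\fs}(F_1*_N F_2)=\cR^{\fs}F_1*_{N'}\cR^{\fs}F_2$, and both $\cR^{\fs}F_j$ are $K'$-invariant. Two applications of the integral identity, combined with the fact that $\ph'$ is a bounded spherical function on $N'$, give
$$
\cG(F_1*_NF_2)(\Lambda^{\fs}\ph')=\cG F_1(\Lambda^{\fs}\ph')\,\cG F_2(\Lambda^{\fs}\ph').
$$
So $F\mapsto \cG F(\Lambda^{\fs}\ph')$ is a nonzero multiplicative functional on $L^1(N)^K$, hence $\Lambda^{\fs}\ph'\in\Sigma(N,K)$. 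Continuity is an immediate estimate: for a compact $C\subset N$, the set $\text{proj}(KC)$ is compact in $N'$, and $\sup_{x\in C}|\Lambda^{\fs}\ph'_n(x)-\Lambda^{\fs}\ph'(x)|$ is dominated by $\sup_{y'\in\text{proj}(KC)}|\ph'_n(y')-\ph'(y')|$, which tends to zero whenever $\ph'_n\to\ph'$ uniformly on compacta.

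The only genuine subtlety is the $K$-averaging step in the proof of the integral identity, which relies on the combined use of $K$-invariance of $F$, the automorphism action of $K$ on $N$, and the commutation of $\text{proj}$ with inversion. Once this identity is established, the sphericity of $\Lambda^{\fs}\ph'$ and the continuity of $\Lambda^{\fs}$ are essentially formal consequences.
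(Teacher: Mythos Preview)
Your proof is correct but follows a different route from the paper's. The paper verifies directly that $\Lambda^{\fs}\ph'$ is a joint eigenfunction of every $D\in\bD(N)^K$: using the $K$-invariance of $D$ to pass it under the $K$-average, and then \eqref{opradon*} to convert $D(\ph'\circ\text{proj})$ into $\big((\cR^{\fs}D)\ph'\big)\circ\text{proj}$, which is a scalar multiple of $\ph'\circ\text{proj}$ since $\cR^{\fs}D\in\bD(N')^{K'}$. Only afterwards does the paper compute the integral identity, by the same Fubini argument you use (without the averaging step, since $\Lambda^{\fs}\ph'$ is already known to be $K$-invariant at that point).

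Your approach reverses the order: you prove the integral identity first and then deduce sphericity from the multiplicative characterisation of bounded spherical functions, exploiting the first line of \eqref{composition-radon}. This is a perfectly legitimate alternative, and arguably tidier in that the single integral computation does double duty. One minor point to make explicit is that the multiplicative characterisation, as usually stated, identifies nonzero characters of $L^1(N)^K$ with bounded spherical functions; you should note that your continuous, bounded, $K$-invariant $\Lambda^{\fs}\ph'$ with value $1$ at the identity is then forced to coincide with the spherical function representing that character (two such functions integrating identically against $\cS(N)^K$ are equal). The paper's eigenfunction route has the side benefit of producing the eigenvalue relation $\xi(D,\Lambda^{\fs}\ph')=\xi(\cR^{\fs}D,\ph')$ in \eqref{radon-eigenvalues} as an immediate byproduct, which is used repeatedly afterwards; in your framework this would need a separate (easy) verification.
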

 
\begin{proof}%[Proof of Proposition \ref{propradon*}]
We keep the notation of the statement.
One checks easily that $\Lambda^\fs\ph'$ is a bounded smooth $K$-invariant function on $N$ which is equal to 1 at 0. 
It remains to see that it is an eigenfunction for every $D\in\bD(N)^K$
and, for this,
we compute easily using \eqref{opradon*}:
$$
\begin{aligned}
D(\Lambda^\fs\ph')(v,w)&=\int_KD(\ph'\circ{\rm proj})(kv,kw)\,dk\\
&=\int_K\big((\cR^\fs D)\ph'\big)\circ{\rm proj}(kv,kw)\,dk\\
&=\xi \,\Lambda^\fs\ph'(v,w)\ ,
\end{aligned}
$$
where $\xi=\xi(\cR^\fs D,\ph')$. 
This proves that $\Lambda^\fs\ph'$ is a bounded spherical function for $(N,K)$. Hence the map $\Lambda^\fs$ is well defined.
One checks easily the continuity of $\Lambda^\fs$ for the compact-open topology.

For any $F\in \cS(N)^K$
and $\ph'\in \Sigma(N',K')$, we have
\begin{eqnarray*}
\cG (F)(\Lambda^\fs \ph')
&=&
\int_{\fv\times\fw} F(v,w) (\ph'\circ{\rm proj})(v,w) dv dw
\\
&=&
\int_{\fv\times\fw'} \cR^\fs F(v,w') \ph'(v,w') dv dw'
\\
&=&
\cG ' (\cR^\fs F)(\ph').
\end{eqnarray*}
This shows the last property of the statement.
\end{proof}

Keeping the notation of Proposition \ref{propradon*}, 
we see that we have also obtained the following relation between  tuples of
 eigenvalues associated with $\ph'$ and $\Lambda^\fs \ph'$:
\begin{equation}\label{radon-eigenvalues}
\forall D\in\bD(N)^K\qquad
\xi(D,\Lambda^\fs\ph')=\xi(\cR^\fs D,\ph')\ .
\end{equation}
In particular, this implies that the map $\Lambda^\fs$ is smooth in the following sense:

\begin{proposition}\label{Lambda-smooth}
In the setting of Proposition \ref{propradon*}, 
let $\cD$, resp. $\cD'$, be a $d$-tuple, resp. a $d'$-tuple, of operators generating the algebra $\bD(N)^K$, resp. $\bD(N')^{K'}$.
Then $\Lambda^\fs$, regarded as a map from $\Sigma'_{\cD'}$ to $\Sigma_{\cD}$, is the restriction of a  polynomial map from~$\bR^{d'}$ to~$\bR^{d}$.
\end{proposition}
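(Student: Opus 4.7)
The plan is to read off the map $\Lambda^{\fs}$ at the level of eigenvalues and then to use the fact that $\cD'$ generates $\bD(N')^{K'}$ as an algebra to express it by polynomials.

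First I would observe that, by \eqref{opradon*} and the discussion after it, each operator $\cR^{\fs}D_j$ belongs to $\bD(N')^{K'}$ (for $1\le j\le d$). Since by hypothesis the tuple $\cD'=(D'_1,\dots,D'_{d'})$ generates $\bD(N')^{K'}$ as an algebra, there exist polynomials $P_1,\dots,P_d$ in $d'$ variables such that
$$
\cR^{\fs}D_j \;=\; P_j(D'_1,\dots,D'_{d'}), \qquad j=1,\dots,d.
$$

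Next I would use the fundamental fact that if $\ph'$ is a bounded spherical function for $(N',K')$, then $\ph'$ is a joint eigenfunction of every element of $\bD(N')^{K'}$, and the eigenvalue of a polynomial expression in strongly commuting operators is the corresponding polynomial of their eigenvalues. Combined with the identity \eqref{radon-eigenvalues}, this gives for each $j$
$$
\xi\bigl(D_j,\Lambda^{\fs}\ph'\bigr)
\;=\;\xi\bigl(\cR^{\fs}D_j,\ph'\bigr)
\;=\;P_j\bigl(\xi(D'_1,\ph'),\dots,\xi(D'_{d'},\ph')\bigr).
$$
Setting $P=(P_1,\dots,P_d):\bR^{d'}\to\bR^d$, the previous line reads $\xi(\Lambda^{\fs}\ph')=P\bigl(\xi(\ph')\bigr)$ for every $\ph'\in\Sigma(N',K')$, where the $\xi$'s denote the embeddings into $\bR^d$ and $\bR^{d'}$ defined by $\cD$ and $\cD'$. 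Thus, under the identifications $\Sigma\cong\Sigma_{\cD}$ and $\Sigma'\cong\Sigma'_{\cD'}$, the map $\Lambda^{\fs}$ is the restriction to $\Sigma'_{\cD'}$ of the polynomial map $P$, as claimed.

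There is really no deep obstacle here: the only point that requires a moment of care is that $\cR^{\fs}$ maps $\bD(N)^K$ into $\bD(N')^{K'}$, which is immediate from \eqref{opradon*1} together with the $K'$-invariance of the restriction $p_{|_{\fn'}}$ when $p$ is $K$-invariant and $K'\subset K$ stabilises $\fs$ (and hence $\fn'$). The rest is a direct application of the generating property of $\cD'$ and of the eigenvalue identity \eqref{radon-eigenvalues}.
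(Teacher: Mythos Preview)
Your proof is correct and follows essentially the same approach as the paper: express each $\cR^{\fs}D_j$ as a polynomial in $\cD'$ using that $\cD'$ generates $\bD(N')^{K'}$, then apply the eigenvalue identity \eqref{radon-eigenvalues}. The paper's proof is slightly more terse but the argument is identical.
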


\begin{proof}
Let $\cD=(D_1,\dots,D_d)$ 
and $\cD'=(D'_1,\dots,D'_{d'})$. For each $j=1,\dots,d$, $\cR^\fs D_j\in\bD(N')^{K'}$. Hence there exists a polynomial $p_j$ in $d'$ variables such that 
$\cR^\fs D_j=p_j(\cD')$. By \eqref{radon-eigenvalues}, given a $K'$-spherical function $\ph'$ on $N'$ with $d'$-tuple of  eigenvalues $\xi'$, we have
$$
\xi(D_j,\Lambda^\fs\ph')=p_j(\xi')\ .\qquad\qedhere
$$
\end{proof}

Parametrising the bounded spherical function
as  in \eqref{trace} and keeping the notation of Lemma~\ref{representations}, 
we can describe the image of $\Lambda^\fs$ in the following way.
\begin{proposition}
\label{lem_propradon*}
In the setting of Proposition \ref{propradon*}, let $\ph'=\ph'_{\zeta,\omega,\nu}$ be a bounded spherical function on $N'$
as in \eqref{trace}, where $\zeta\in\fw'$, $\omega\in\fr_\zeta$ and $V'(\nu)$  
an  irreducible $K'_{\zeta,\omega}$-invariant subspace of the representation space $\cH_\zeta$. If $V(\mu)$ is the  irreducible $K_{\zeta,\omega}$-invariant subspace containing $V'(\nu)$, we have:
$$
\Lambda^\fs \ph'_{\zeta,\omega,\nu} = \ph_{\zeta,\omega,\mu}.
$$

In particular, the image of $\Lambda^\fs$ consists of those $\ph\in\Sigma$ which can be expressed as $\ph_{\zeta,\omega,\mu}$ for some $\zeta\in \fw'$.
\end{proposition}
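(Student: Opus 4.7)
The plan is to evaluate $\Lambda^\fs\ph'_{\zeta,\omega,\nu}$ directly from its definition and identify it with $\ph_{\zeta,\omega,\mu}$ via a Schur-type argument applied to the $K$-averaged operator
$$
\Phi(v,w)=\int_K\pi_{\zeta,\omega}(kv,kw)\,dk \qquad \text{on } \cH_\zeta.
$$

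First I would observe that for $\zeta\in\fw'=\fs^\perp$ one has $\langle\zeta,s\rangle=0$ for every $s\in\fs$, so $d\pi_{\zeta,\omega}$ vanishes on $\fs$ and $\pi_{\zeta,\omega}$ factors through $N'=N/\fs$, acting on the same Hilbert space $\cH_\zeta$. Moreover the radical $\fr_\zeta$ coincides for $\fn$ and $\fn'$, so the choice of $\omega\in\fr_\zeta$ is legitimate in both settings. I would then substitute the trace formula \eqref{trace} for $\ph'_{\zeta,\omega,\nu}$ into
$$
\Lambda^\fs\ph'_{\zeta,\omega,\nu}(v,w)=\int_K\ph'_{\zeta,\omega,\nu}\bigl(kv,\mathrm{proj}(kw)\bigr)\,dk,
$$
and use that $k_1\in K'$ preserves $\fs$, together with $\zeta\perp\fs$, to replace $\pi_{\zeta,\omega}(k_1kv,k_1\mathrm{proj}(kw))$ by $\pi_{\zeta,\omega}(k_1kv,k_1kw)$. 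After Fubini and the change of variables $k\mapsto k_1^{-1}k$ (using bi-invariance of Haar measure on $K$ and the fact that $K'$ has total mass $1$), the $K'$-integration collapses and one is left with
$$
\Lambda^\fs\ph'_{\zeta,\omega,\nu}(v,w)=\frac{1}{\dim V'(\nu)}\,\tr\bigl(\Phi(v,w)|_{V'(\nu)}\bigr).
$$

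The key step is then to show that $\Phi(v,w)$ commutes with $\sigma(K_{\zeta,\omega})$. This should follow from the intertwining relation $\pi_{\zeta,\omega}(kv',kw')=\sigma(k)\pi_{\zeta,\omega}(v',w')\sigma(k)^{-1}$ for $k\in K_{\zeta,\omega}$, combined with the left-invariance of Haar measure on $K$: conjugating the integrand and substituting $k'\mapsto kk'$ reproduces $\Phi(v,w)$ itself. Since $\cH_\zeta=\bigoplus_\mu V(\mu)$ is a multiplicity-free decomposition into $K_{\zeta,\omega}$-irreducibles by Proposition \ref{carcano}, Schur's lemma forces $\Phi(v,w)|_{V(\mu)}=c_\mu(v,w)\,\mathrm{id}_{V(\mu)}$ for some scalar $c_\mu(v,w)$. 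If $V'(\nu)\subseteq V(\mu)$, the same scalar is inherited on $V'(\nu)$, so
$$
\frac{\tr(\Phi(v,w)|_{V'(\nu)})}{\dim V'(\nu)}=c_\mu(v,w)=\frac{\tr(\Phi(v,w)|_{V(\mu)})}{\dim V(\mu)}=\ph_{\zeta,\omega,\mu}(v,w),
$$
which is the main identity.

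For the ``in particular'' statement, the inclusion of the image of $\Lambda^\fs$ into the prescribed set is immediate from the formula just established. Conversely, given $\ph_{\zeta,\omega,\mu}$ with $\zeta\in\fw'$, the $K_{\zeta,\omega}$-module $V(\mu)$ is automatically $K'_{\zeta,\omega}$-stable (since $K'_{\zeta,\omega}\subseteq K_{\zeta,\omega}$) and therefore decomposes under $K'_{\zeta,\omega}$, yielding at least one component $V'(\nu)\subseteq V(\mu)$ from the multiplicity-free decomposition of $\cH_\zeta$ guaranteed by the Gelfand-pair hypothesis on $(N',K')$; the spherical function $\ph'_{\zeta,\omega,\nu}$ then maps to $\ph_{\zeta,\omega,\mu}$. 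The main obstacle is the Schur step, and specifically the commutativity of $\Phi(v,w)$ with $\sigma(K_{\zeta,\omega})$: one must verify that the intertwining relation, which a priori holds only pointwise in the $N$-variable, survives the $K$-averaging — this is exactly the point at which bi-invariance of Haar measure and the use of the full $K_{\zeta,\omega}$ (rather than merely $K'_{\zeta,\omega}$) are both essential.
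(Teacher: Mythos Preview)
Your proof is correct and follows essentially the same route as the paper's: the paper reduces $\Lambda^\fs\ph'_{\zeta,\omega,\nu}(v,w)$ to $\frac{1}{\dim V'(\nu)}\int_K\tr\bigl(\pi_{\zeta,\omega}(kv,kw)|_{V'(\nu)}\bigr)\,dk$ and then invokes Schur's lemma to show that the averaged diagonal coefficient $\int_K\langle\pi_{\zeta,\omega}(kv,kw)e,e\rangle\,dk$ is independent of the unit vector $e\in V(\mu)$. Your packaging via the $\sigma(K_{\zeta,\omega})$-commuting operator $\Phi(v,w)$ is exactly the same argument, just made slightly more explicit.
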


\begin{proof}
Let $\pi'_{\zeta,\omega}$ be the representation of $N'$ associated with 
$(\zeta,\omega)$ as  in Lemma \eqref{representations}.
As $\zeta\in \fw'=\fs^\perp$, 
one checks easily that $\omega\in \fv$ is in the $\zeta$-radical for $\fn$ and $\fn'$. Hence we can also consider the unitary irreducible representation 
$\pi_{\zeta,\omega}$ of $N$.
Lemma \eqref{representations} implies  easily that
$\pi'_{\zeta,\omega}=(\pi_{\zeta,\omega})_{|N'}$
and furthermore that 
$$
\pi_{\zeta,\omega} = \pi'_{\zeta,\omega}\circ {\rm proj}.
$$

By \eqref{trace},  
the spherical function $\ph'_{\zeta,\omega,\nu}\in\Sigma (N',K')$ is given via
$$
\ph'_{\zeta,\omega,\nu}(v,w)
=
\frac1{\dim V'(\nu)}\int_{K'}\tr\big(\pi'_{\zeta,\omega}(hv,hw)_{|_{V'(\nu)}}\big)\,dh\ , 
\quad (v,w)\in N'.
$$
Hence, for any $(v,w)\in N$, we have
$$
\Lambda^\fs \ph'_{\zeta,\omega,\nu}(v,w)
=
\frac1{\dim V'(\nu)}
\int_{K}
\tr\big(\pi_{\zeta,\omega}(kv,kw)_{|_{V'(\nu)}}\big)\,dk\ .
$$

As $K'_{\zeta,\omega}\subset K_{\zeta,\omega}$, 
the decomposition $\cH_\zeta=\sum_{\nu\in\fX'_{\zeta,\omega}}V'(\nu)$
is a refinement of $\cH_\zeta=\sum_{\mu\in\fX_{\zeta,\omega}}V(\mu)$.
Since both decompositions are multiplicity-free,
each $V(\mu)$ is a finite union of $V'(\nu)$.
We observe that, by Schur's lemma, for every $\mu\in\fX_{\zeta,\omega}$ and every unit element $e\in V(\mu)$, we have 
$$
\int_K\lan\pi_{\zeta,\omega}(kv,kw)e,e\ran\,dk=\frac1{\dim V(\mu)}\int_K\tr\big(\pi_{\zeta,\omega}(kv,kw)_{|_{V(\mu)}}\big)\,dk\ ,
\quad (v,w)\in N.
$$
Consequently, 
when  $V'(\nu)\subset V(\mu)$, we have:
$$
\frac1{\dim V'(\nu)}
\int_{K}
\tr\big(\pi_{\zeta,\omega}(kv,kw)_{|_{V'(\nu)}}\big)\,dk
=
\frac1{\dim V(\mu)}
\int_{K}
\tr\big(\pi_{\zeta,\omega}(kv,kw)_{|_{V(\mu)}}\big)\,dk,
$$
and thus $\Lambda^\fs \ph'_{\zeta,\omega,\nu}
=\ph_{\zeta,\omega,\mu}$.
\end{proof}

Propositions \ref{propradon*}-\ref{lem_propradon*} will be an essential tool in Section \ref{subsec_quotient_pairs} to understand the local identifications between the spectrum of a given nilpotent Gelfand pair $(N,K)$ and those of its quotient pairs.
At the moment, we use these results to conclude the present section with the proof of Proposition \ref{central}.

\subsection{Proof of Proposition \ref{central}}
\label{subsec_pf_prop_central}\

We keep the same notation of Section \ref{subsec_quotient} for
 $N, \fn=\fv\oplus \fw, \fs \subset \fw, 
N',\cR^\fs,K$.
We suppose furthermore that $(N,K)$ is a nilpotent Gelfand pair and 
that the subspace $\fs$  is invariant under $K$.
The group $K'$ stabilising~$\fs$ is then $K$ itself.
It is proved in \cite{V2} that $(N',K)$ is a nilpotent Gelfand pair.

Simplifying the proof of Proposition \ref{propradon*},
one  easily checks that, under these assumptions, 
 if $\ph'$ is spherical and bounded on $N'$,  
 then  $\ph'\circ {\rm proj} $ is also spherical and bounded on $N$.
 Moreover, the map $\Lambda^\fs$ in Proposition~\ref{propradon*} reduces to composition with ${\rm proj}$, that is, 
 $$
 \Lambda^\fs \ph' = \ph' \circ {\rm proj}.
 $$
In particular, $\Lambda^\fs$ is injective and admits a simple realisation once we  choose suitable generators for $\bD(N)^K$ and $\bD(N')^K$
 in the following way.
We fix a  Hilbert basis $(\rho_1,\dots,\rho_{d'})$ of $K$-invariants on $\fv\oplus\fw'$, and complete it into a  Hilbert basis $(\rho_1,\dots,\rho_{d'},\rho_{d'+1},\dots,\rho_d)$ of $K$-invariants on $\fn$, where each of the added polynomials $\rho_{d'+1}(v,w'+s),\dots,\rho_d(v,w'+s)$ vanishes for $s=0$.
Applying the symmetrisation  \eqref{modsym} on $N$ and $N'$ respectively, we obtain the generating systems of differential operators 
$$
\cD=(D_1,\dots,D_d)\ ,\qquad \cD'=(D'_1,\dots,D'_{d'})\ ,
$$
on $N$ and $N'$ respectively. 
The equality in \eqref{opradon*1} implies that
$$
\cR^\fs D_j=
\begin{cases}
D'_j&\text{ if } j=1,\dots,d'\ ,\\
0&\text{ if }  j=d'\!\!+\!1,\dots,d\ .
\end{cases}
$$

The link between the eigenvalues associated with $\ph'$ and $\Lambda^\fs \ph'$
in \eqref{radon-eigenvalues} then yields
$\xi (\Lambda^\fs\ph')  = \big(\xi (\ph'),0\big)$.
This shows that, 
if the Gelfand spectra are realised with $\cD$ and $\cD'$,
the map $\Lambda^\fs$ becomes the injection
 $\xi'\longmapsto (\xi',0)$
 from $\Sigma_{\cD'} \subset \bR^{d'}$ into $\Sigma_\cD\subset \bR^{d}$. This allows us to consider $\Sigma_{\cD'}\times \{0\}$ as a subset of $\Sigma_\cD$.
 
 Let $\cG$ and $\cG'$ be the spherical transforms
of $(N,K)$ and $(N',K')$ respectively.
We fix a smooth and compactly supported function $\psi$ on $\fs$ with integral 1.
Hence for any $F\in \cS(N')^K$,
$(F\otimes\psi)(v,w'+s)= F(v,w')\psi(s)$ defines a Schwartz function $F\otimes\psi \in \cS(N)^K$
with 
$\cR^\fs (F\otimes \psi)=F$, 
and, by Proposition \ref{propradon*}, 
we have $\cG' F(\ph') 
=\cG (F\otimes \psi) (\Lambda^\fs\ph')$ for any
$\ph'\in \Sigma(N',K)$.
Realising the Gelfand spectra as $\Sigma_\cD$ and $\Sigma_{\cD'}$, 
we have obtained 
\begin{equation}
\cG'F (\xi')
=
\cG (F\otimes \psi) (\xi',0)
\quad\mbox{for any}\  \xi' \in \Sigma_{\cD'}.
\label{Gelfand_transforms}
\end{equation}

One checks readily that the map $F\mapsto F\otimes\psi$ is continuous  and linear from $\cS(N')^K$ to $\cS(N)^K$. 

We now assume that $(N,K)$ satisfies property (S).
This means that 
$\cG:\cS(N)^K\rightarrow \cS(\Sigma_\cD)$ 
is an isomorphism of Fr\'echet spaces. 
Consequently,
for any $F\in \cS(N')^K$, 
$\cG (F\otimes \psi)$ extends to a Schwartz function $f \in \cS(\bR^d)$.
The equality in \eqref{Gelfand_transforms} implies that 
$\cG' F$ extends to the function $\xi'\mapsto f(\xi',0) $
which is in $\cS(\bR^{d'})$.
Together with \eqref{inclusion}, this shows that $(N',K)$ satisfies property (S), and this is the desired conclusion.

\section{Quotient pairs, slices and radialisation}
\label{sec_quotient_pairs}
In this section, we define the notion of quotient pairs which appears in the formulation of property (S). In order to prove that they are nilpotent Gelfand pairs, we must appeal to the slice theorem for  actions of compact Lie groups on vector spaces. This is done in Sections \ref{sec-slices} and \ref{subsec_quotient_pairs}, where the related notion of radialisation of a function on a slice is also presented. In Section \ref{special} we fix convenient Hilbert bases which will be used in  Section \ref{subsec_rad_quotient} to establish relations between the Gelfand spectrum of $(N,K)$ and that of a quotient pair.
%The generalities on slices and radialisation in
%is applied to the setting of nilpotent Gelfand pairs and their quotients in 
%.
%
%(cf. Section \ref{subsec_quotient_pairs})
%together with the objects naturally attached to them, 
%in particular Hilbert base .

We fix a nilpotent Gelfand pair $(N,K)$.
Given a point $t\in\fw$, 
we define the two following subspaces of $\fw$:
$$
\ft_t:=\fk\cdot t 
\qquad\mbox{and}\qquad
\fw_t:=(\fk \cdot t)^\perp .
$$
Here $\fk$ is the Lie algebra of $K$ thus  $\ft_t$ is the tangent space in $t$ to the $K$-orbit  $K\cdot t$ in $\fw$.
As  the action of~$\fk$ is skew adjoint,  
we have $t\in \fw_t$. With respect to the decomposition $\fw=\fw_0\oplus\check\fw$ in \eqref{z_0}, we also have  $\check \fw\subset \fw_t$.

In order to avoid the trivial situation $\ft_t=\{0\}, \fw_t=\fw$, we also assume  $t\not\in\check\fw$. So, for $t\in\fw\setminus\check\fw$, we consider the quotient algebra 
$$
\fn_t:=\fn/\ft_t,
$$
denoting the canonical projection by ${\rm proj}_t$. Notice however that the decomposition 
$$
\fw=\ft_t\oplus\fw_t
$$
only depends on the $\fw_0$-component of $t$.

As in Section~\ref{subsec_quotient}, we regard $\fn_t$ as $\fv\oplus\fw_t$, with Lie bracket $[v,v']_{\fn_t}={\rm proj}_t[v,v']$. By $N_t$ we denote the quotient group $N/\exp\ft_t$.

We observe that the subspaces $\ft_t$ and $\fw_t$ of $\fw$ are invariant under the action of the stabiliser $K_t$ of $t$ in $K$.
Hence,  passing to the quotient, we obtain an action of $K_t$ on $N_t$. We call $(N_t,K_t)$ a {\it quotient pair} of $(N,K)$.

\subsection{Generalities on slices and radialisation}
\label{sec-slices}

We start this section by recalling some known facts about action of a compact groups on a vector space and the notion of slices.

A construction of a {\it slice} for a compact group action goes back to 
Gleason \cite{Glea}. We will need the ``linear" version of the slice theorem.

\begin{theorem}\label{Slice-Thm}
Let $W$ be a Euclidean vector space 
and let $K$ be a compact real Lie group $K$ acting orthogonally on $W$.
For any $x\in W$, we denote by
$K_x$ the stabiliser of $x$ in $K$
and by
$(\fk\cdot x)^\perp$  the normal space to the orbit $Kx$ at $x$.
There is an open and $K_x$-invariant (Euclidean) neighbourhood  $S_x$ of $0$ in $(\fk\cdot x)^\perp$
such that the $K$-equivariant map
$$
\sigma:K\times_{K_x} S_x \longrightarrow W,
$$
given by $\sigma(k,y)= k(x+y)$, 
is
a diffeomorphism of $K\times_{K_x}S_x$ onto the open neighbourhood $K(x+S_x)$ of~$Kx$.
\end{theorem}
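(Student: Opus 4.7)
The plan is to verify that $\sigma$ is a well-defined, $K$-equivariant local diffeomorphism at every point of $K\times_{K_x}\{0\}$ via the inverse function theorem, and then to use compactness of the orbit to shrink $S_x$ so as to obtain global injectivity.

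First I would check that $\sigma:K\times S_x\longrightarrow W$, $(k,y)\mapsto k(x+y)$, descends to the quotient $K\times_{K_x}S_x$: if $h\in K_x$, then $\sigma(kh^{-1},hy)=kh^{-1}(x+hy)=kh^{-1}(hx+hy)=k(x+y)=\sigma(k,y)$, using $hx=x$. Since the $K_x$-action on $K\times S_x$ given by $h\cdot(k,y)=(kh^{-1},hy)$ is free (because it is free on the first factor), the quotient $K\times_{K_x}S_x$ is a smooth manifold of dimension $\dim K-\dim K_x+\dim(\fk\cdot x)^\perp=\dim(Kx)+\bigl(\dim W-\dim(Kx)\bigr)=\dim W$. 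By construction $\sigma$ is $K$-equivariant for left multiplication on the first factor and the given $K$-action on $W$.

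Next I would compute the differential of $\sigma$ at the base point $[e,0]$. Tangent vectors at $[e,0]$ decompose as $(X,Y)$ with $X\in\fk$ (modulo the stabiliser $\fk_x$) and $Y\in(\fk\cdot x)^\perp$; applying $\sigma$ gives $d\sigma_{[e,0]}(X,Y)=X\cdot x+Y\in\fk\cdot x\oplus(\fk\cdot x)^\perp=W$. Since the map $X\mapsto X\cdot x$ from $\fk/\fk_x$ to $\fk\cdot x$ is a linear isomorphism, $d\sigma_{[e,0]}$ is a linear isomorphism between spaces of the same dimension. The inverse function theorem then yields an open $K_x$-invariant neighbourhood $U$ of $0$ in $(\fk\cdot x)^\perp$ and a neighbourhood $V$ of $e$ in $K$ such that $\sigma$ restricted to $V\times_{\{e\}}U$ is a diffeomorphism onto an open neighbourhood of $x$. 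The $K$-equivariance of $\sigma$ immediately propagates the local-diffeomorphism property to every point of $K\times_{K_x}\{0\}$, so shrinking $U$ to a $K_x$-invariant open $S_x\subset(\fk\cdot x)^\perp$ (which is possible since $K_x$ is compact) we may assume $\sigma$ is a local diffeomorphism on all of $K\times_{K_x}S_x$.

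The subtle step, which I expect to be the main obstacle, is upgrading the local diffeomorphism to a global one by ensuring injectivity. Suppose to the contrary that no such $S_x$ works; then one can find sequences $k_n,k'_n\in K$ and $y_n,y'_n\to 0$ in $(\fk\cdot x)^\perp$ with $k_n(x+y_n)=k'_n(x+y'_n)$ but $[k_n,y_n]\neq[k'_n,y'_n]$ in $K\times_{K_x}S_x$. Passing to subsequences using compactness of $K$, we may assume $k_n\to k$, $k'_n\to k'$. Passing to the limit gives $kx=k'x$, so $h:=k^{-1}k'\in K_x$. Then $h_n:=k_n^{-1}k'_n\to h$; replacing $(k'_n,y'_n)$ by the equivalent representative $(k'_nh^{-1},hy'_n)$ we may assume $h=e$. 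A tubular argument then reduces to the case handled by the inverse function theorem on a single chart around $(e,0)$, contradicting the assumption that the pairs were inequivalent. Hence a sufficiently small $K_x$-invariant $S_x$ renders $\sigma$ injective, and combined with the local-diffeomorphism property this gives the diffeomorphism onto the open tube $K(x+S_x)$.
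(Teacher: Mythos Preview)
Your argument is the standard proof of the linear slice theorem and is essentially correct. Note, however, that the paper does not supply its own proof of this statement: it simply records that the proof can be found in Bredon's book \cite[Ch.~2, Section~5]{bre}, in particular Corollary~5.2 there. So there is no in-paper argument to compare against; what you have written is a self-contained proof where the paper chose to cite.

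A couple of minor remarks on your write-up. First, when you shrink $U$ to a $K_x$-invariant neighbourhood, it is cleanest to take a small Euclidean ball in $(\fk\cdot x)^\perp$ centred at $0$, which is automatically $K_x$-invariant since $K_x$ acts orthogonally; this avoids any worry about intersecting infinitely many translates. Second, your final injectivity step (``a tubular argument then reduces\ldots'') is correct but compressed: after arranging $k_n^{-1}k'_n\to e$, $K$-equivariance gives $\sigma([e,y_n])=\sigma([k_n^{-1}k'_n,y'_n])$, and both classes lie, for large $n$, in the neighbourhood of $[e,0]$ on which the inverse function theorem already guarantees injectivity of $\sigma$; this forces $[e,y_n]=[k_n^{-1}k'_n,y'_n]$ and hence $[k_n,y_n]=[k'_n,y'_n]$, the desired contradiction. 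Spelling this out would make the proof complete.
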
 

We call $S_x$ a {\it slice} at $x$. The notation $K\times_{K_x} S_x$ stands for the quotient of $K\times S_x$ modulo  the action of~$K_x$, i.e., $(kk',x)$ is equivalent to $(k,k'x)$ for $k'\in K_x$.

%%the equivalence relation
%%$$
%%(k,y)\cong(k',y') \Longleftrightarrow kx= k'x\text{ and } ky=k'y'\ .
%%$$

The proof of Theorem~\ref{Slice-Thm} can be found, for instance, in \cite[Ch.~2,\,Section~5]{bre}, 
in particular, see Corollary~5.2 therein. 
The theorem has the following almost immediate consequence (for part (i), see e.g. \cite[Ch.~2,\,Sections~4,~5]{bre}).
  
\begin{corollary}\label{Slice-cor}
We keep the notation of Theorem \ref{Slice-Thm}. 
\begin{enumerate}
\item[\rm(i)] For every $y\in x+ S_x$ we have the inclusion $K_y\subset K_x$, more explicitly  
$K_y=(K_x)_y$.
\item[\rm(ii)]  Two points in $x+S_x$ are conjugate under $K$ if and only if they are conjugate under~$K_x$.
\item[\rm(iii)] Suppose that $f$ is a $K_x$-invariant smooth function 
on $x+S_x$.  Then $f$
extends in a unique way to a~smooth $K$-invariant function $f^{\rm rad}$ 
on $K(x+S_x)$.
\end{enumerate}
\end{corollary}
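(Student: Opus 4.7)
The plan is to derive all three parts directly from the slice theorem, by repeatedly exploiting the injectivity of the equivariant diffeomorphism $\sigma:K\times_{K_x}S_x\to K(x+S_x)$. Recall that the quotient $K\times_{K_x}S_x$ identifies $(k,s)$ with $(kh^{-1},hs)$ for $h\in K_x$, so injectivity of $\sigma$ on the quotient translates into the statement that $k(x+s)=k'(x+s')$ forces the existence of some $h\in K_x$ with $k=k'h$ and $s'=hs$.

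For part (i), I would fix $y=x+s\in x+S_x$ and $k\in K_y$. Then $\sigma(k,s)=k(x+s)=x+s=\sigma(e,s)$, so by injectivity there exists $h\in K_x$ with $k=h$ and $s=hs$. In particular $k\in K_x$, hence $K_y\subset K_x$; combined with $kx=x$, this gives $k\in(K_x)_y$. The reverse inclusion is immediate. For part (ii), if $y=x+s$ and $y'=x+s'$ lie in $x+S_x$ and $ky=y'$ for some $k\in K$, the same injectivity argument applied to $\sigma(k,s)=\sigma(e,s')$ produces $h\in K_x$ with $h(x+s)=x+s'$, so $y$ and $y'$ are $K_x$-conjugate. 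The converse direction is obvious.

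For part (iii), parts (i) and (ii) already tell us what to do: define $f^{\rm rad}(k(x+s))=f(x+s)$, and check well-definedness via (ii), since any two expressions of the same point as $k(x+s)$ with $s\in S_x$ have slice components conjugate under $K_x$, on which $f$ is invariant by hypothesis. To see smoothness, I would introduce $\tilde f:K\times S_x\to\bC$ by $\tilde f(k,s)=f(x+s)$; this is smooth and (trivially in the $K$-variable and by $K_x$-invariance of $f$ in the slice variable) descends to a smooth function on $K\times_{K_x}S_x$, which via $\sigma^{-1}$ yields $f^{\rm rad}$ as a smooth function on $K(x+S_x)$. The $K$-invariance of $f^{\rm rad}$ follows from the $K$-equivariance of $\sigma$, and uniqueness is clear since any $K$-invariant extension is forced to agree with $f$ on the slice, which meets every $K$-orbit in $K(x+S_x)$.

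The statement is essentially a repackaging of the slice theorem, so there is no serious analytic difficulty; the only point requiring some care is the bookkeeping of the equivalence relation on $K\times_{K_x}S_x$ so that the injectivity of $\sigma$ is invoked correctly. Once that is settled, each of (i)--(iii) is a short argument.
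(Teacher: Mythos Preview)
Your proof is correct and follows exactly the approach the paper intends: the paper does not spell out a proof but simply declares the corollary an ``almost immediate consequence'' of the slice theorem, referring to Bredon for part~(i). Your argument makes this explicit by unpacking the injectivity of $\sigma$ on $K\times_{K_x}S_x$, which is precisely the content of the slice theorem needed for all three parts.
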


We call $f^{\rm rad}$ the {\it radialisation of} $f$. 

\begin{remark}
{\rm This notion of radialisation extends the one used in \cite{FR} to  general pairs, including the rank-one pairs considered in \cite{FRY1}. We notice that  the  arguments in the rest of this paper will not rely on the results of \cite{FRY1}, which in fact are being given a different proof.}
\end{remark}

We also have the following consequence in terms of Hilbert bases. Let ${\boldsymbol\rho}=(\rho_1,\ldots,\rho_{d_\rho})$ be a bi-homogeneous real Hilbert basis for the orthogonal action of a compact real Lie group $K$ on a Euclidean vector space $W$.
We fix a point $x\in W$
and we consider a  bi-homogeneous real Hilbert basis ${\boldsymbol\tau}=(\tau_1,\ldots,\tau_{d_\tau})$ for the action of the stabiliser $K_x$ of $x$ in $K$ on the normal space
$(\fk\cdot x)^\perp$   to the orbit $Kx$ at~$x$.
By the properties of Hilbert bases, 
 there exists a polynomial map $P:\bR^{d_\tau}\to \bR^{d_\rho}$ such that 
${\boldsymbol\rho}_{|(\fk\cdot x)^\perp} = P\circ {\boldsymbol\tau}$ on~$(\fk\cdot x)^\perp$.

The next statement says that $P$ admits a smooth right-inverse on a slice at $x$. In order to formulate it precisely, we need to introduce the space
\begin{equation}\label{slowly increasing}
\cO(W)=\big\{f:W\longrightarrow\bC: \forall\,\al\ \, \exists\, p_\al\in\cP(W) \text{ s.t. }|\de^\al f|\le p_\al\big\}.
\end{equation}

A scalar (or vector-valued) function is called a {\it slowly increasing smooth function} if it (or each of its components) is in $\cO(W)$.

\begin{corollary}
\label{cor_rad}
Let $S_x$ be an open $K_x$-invariant neighbourhood of 0 in $(\fk\cdot x)^\perp$ as in Theorem \ref{Slice-Thm}.
Then $x+S_x\subset (\fk\cdot x)^\perp$.
Let $U$ be a Euclidean neighbourhood of $x$
such that $U$ is $K_x$-invariant, relatively compact and strictly included in $x+S_x$.
There exists a  slowly increasing smooth
 map $\Psi:\bR^{d_\rho}\to \bR^{d_\tau}$ such that 
${\boldsymbol\tau} = \Psi\circ {\boldsymbol\rho}$ on $U$, i.e.,
$$
 \forall w\in U,\qquad
{\boldsymbol\tau}(w)=\Psi\big({\boldsymbol\rho}(w)\big).
$$

Hence $P\circ \Psi=\id$ on ${\boldsymbol\rho}(U)$.
\end{corollary}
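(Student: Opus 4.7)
The plan is to build $\Psi$ component by component, by extending each invariant $\tau_i$ from the slice to a $K$-invariant smooth function on all of $W$ and then invoking G.~Schwarz's theorem \cite{Schw}; a final cut-off on the spectral side $\bR^{d_\rho}$ will upgrade the result to a slowly increasing function. Before anything else, I would record that $x\in(\fk\cdot x)^\perp$: the infinitesimal action of $\fk$ on $W$ is skew-adjoint, so $\langle Xx,x\rangle=0$ for every $X\in\fk$. Since $S_x$ is a neighbourhood of $0$ in $(\fk\cdot x)^\perp$, this gives the inclusion $x+S_x\subset(\fk\cdot x)^\perp$ asserted in the statement and lets me treat all the functions involved as defined on an affine subset of that Euclidean space.

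Next I would pick a smooth $K_x$-invariant cut-off $\chi$ on $(\fk\cdot x)^\perp$ equal to $1$ on a neighbourhood of $\overline U$ and compactly supported inside $x+S_x$, which is possible because $\overline U\subset x+S_x$ and $S_x$ is $K_x$-invariant. For every $i\in\{1,\dots,d_\tau\}$ the function $\chi\tau_i$ is then $K_x$-invariant, smooth and compactly supported in $x+S_x$. By the radialisation principle of Corollary~\ref{Slice-cor}(iii) it extends uniquely to a $K$-invariant smooth function on $K(x+S_x)$; as its support is the $K$-orbit of a compact set, this extension prolongs further by zero to a $K$-invariant, smooth, compactly supported function $\eta_i^{\rm rad}$ on all of $W$. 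Applying G.~Schwarz's theorem I obtain $\tilde\Psi_i\in C^\infty(\bR^{d_\rho})$ with $\eta_i^{\rm rad}=\tilde\Psi_i\circ{\boldsymbol\rho}$ on $W$. Finally I would fix a smooth compactly supported function $\Theta$ on $\bR^{d_\rho}$ equal to $1$ on the compact set ${\boldsymbol\rho}(\overline U)$ and set $\Psi_i:=\Theta\,\tilde\Psi_i$; the vector-valued $\Psi=(\Psi_1,\dots,\Psi_{d_\tau})$ is then smooth and compactly supported, hence trivially slowly increasing. For $w\in U$ one has $\chi(w)=1$ and $\Theta({\boldsymbol\rho}(w))=1$, so $\Psi_i({\boldsymbol\rho}(w))=\tilde\Psi_i({\boldsymbol\rho}(w))=\eta_i^{\rm rad}(w)=\tau_i(w)$, which is the required identity. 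The consequence $P\circ\Psi=\id$ on ${\boldsymbol\rho}(U)$ then follows at once: if $\xi={\boldsymbol\rho}(w)$ with $w\in U$, then $P(\Psi(\xi))=P({\boldsymbol\tau}(w))={\boldsymbol\rho}(w)=\xi$.

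The main obstacle I anticipate is bridging the gap between the slice $(\fk\cdot x)^\perp$, on which only the smaller group $K_x$ acts, and the ambient space $W$, on which Schwarz's theorem requires full $K$-invariance. This is exactly the role of Corollary~\ref{Slice-cor}: the slice theorem guarantees that on $x+S_x$ the $K$-orbits coincide with the $K_x$-orbits, so any $K_x$-invariant smooth function on the slice genuinely radialises to a $K$-invariant smooth function on the saturation $K(x+S_x)$. The additional cut-off $\Theta$ on $\bR^{d_\rho}$ is a minor technical device used to promote the bare smoothness produced by Schwarz's theorem to the slow growth demanded by the statement; any stronger tempered-growth condition would be achieved by the same construction.
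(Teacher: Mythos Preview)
Your proof is correct and follows essentially the same route as the paper: radialise each $\chi\tau_i$ via Corollary~\ref{Slice-cor}(iii), extend by zero to all of $W$, and apply G.~Schwarz's theorem. The only difference is in the final step securing slow increase: the paper invokes the homogeneity of the Hilbert bases and a dyadic-scaling argument from \cite{ADR2}, whereas you simply multiply by a compactly supported cut-off $\Theta$ on $\bR^{d_\rho}$, which is a perfectly valid (and arguably simpler) way to meet the stated requirement.
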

\begin{proof}
As the action of $K$ is orthogonal, one checks easily that 
$x\perp \fk \cdot x$ thus $x+S_x\subset (\fk\cdot x)^\perp$.

Let $\chi\in C^\infty_c(x+S_x)$ be $K_x$-invariant and equal to 1 on  $U$.   For $j=1,\dots,d_\tau$, thanks to Corollary \ref{Slice-cor}, 
we may define  $u_j\in C^\infty(W)$ via
$$
u_j(w)=
\left\{\begin{array}{ll}
(\chi \tau_j)^{\rm rad}(w),
&\mbox{if}\ w\in K(x+S_x)\\
0 & \mbox{if}\ w\not\in K(x+S_x).
\end{array}\right.
$$
By  G. Schwarz's theorem \cite{Schw}, 
there exists a smooth function $\Psi_j\in C^\infty(\bR^{d_{\boldsymbol\rho}})$
such that 
$u_j=\Psi_j\circ {\boldsymbol\rho}$ on $W$.
Writing $u=(u_1,\ldots,u_{d_\tau})$ and $\Psi=(\Psi_1,\ldots,\Psi_{d_\tau})$, we see that on $U$, 
${\boldsymbol\tau}=u=\Psi\circ {\boldsymbol\rho}$.

Using the homogeneity properties of the two bases, it is possible to construct $\Psi$ homogeneous under dyadic scaling, hence with polynomial growth in all derivatives, cf. \cite{ADR2}.
\end{proof}

\subsection{The Gelfand spectrum of a quotient pair}
\label{subsec_quotient_pairs}

We first observe that the quotient pairs defined at the beginning of Section \ref{sec_quotient_pairs} are Gelfand.

\begin{corollary}
Assume that $(N,K)$  is a nilpotent Gelfand pair. Then, for every $t\in\fw\setminus\check\fw$, also $(N_t,K_t)$ is a nilpotent Gelfand pair.
\end{corollary}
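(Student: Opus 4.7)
The plan is to invoke Lemma \ref{N'K'gelfand} with $\fs := \ft_t = \fk\cdot t$ and $K' := K_t$. Two items need to be checked.

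First, I would verify that $K_t$ stabilizes $\fs$. This is immediate: for $k\in K_t$ one has $\mathrm{Ad}(k)\fk = \fk$ and $kt = t$, whence
$$
k\cdot \ft_t = (\mathrm{Ad}(k)\fk)\cdot (kt) = \fk\cdot t = \ft_t.
$$

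Second, I must show that for generic $\zeta'\in \fw' = \fw_t$ the stabilizer $K_{\zeta'}$ of $\zeta'$ in $K$ is contained in $K_t$. My plan here is to apply the slice theorem (Theorem \ref{Slice-Thm}) to the orthogonal action of $K$ on $W=\fw$ at the point $x=t$. Because the $\fk$-action on $\fw$ is by skew-symmetric operators, $t\perp \fk\cdot t$, so $t$ itself lies in $\fw_t = (\fk\cdot t)^\perp$. Consequently the slice $S_t \subset (\fk\cdot t)^\perp$ furnished by the theorem has the property that $t + S_t$ is an open neighborhood of $t$ \emph{inside} $\fw_t$, not merely inside the ambient $\fw$. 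Corollary \ref{Slice-cor}(i) then gives $K_y \subset K_t$ for every $y\in t+S_t$, so the inclusion $K_{\zeta'}\subset K_t$ holds on a non-empty open subset of $\fw_t$. In light of the algebraic characterization of the Gelfand property in Proposition \ref{carcano}, this is sufficient to fulfill the ``generic'' hypothesis of Lemma \ref{N'K'gelfand}.

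With both hypotheses verified, Lemma \ref{N'K'gelfand} applies directly and yields that $(N_t, K_t)$ is a nilpotent Gelfand pair. The argument is essentially a clean combination of Lemma \ref{N'K'gelfand} with the slice theorem and the orthogonality of the $K$-action; no step presents a real obstacle. The only subtlety worth highlighting is the observation that $t \in \fw_t$, which is what makes the slice at $t$ provide an open set of $\fw_t$ through which we can feed the slice-theoretic stabilizer inclusion into the genericity hypothesis of Lemma \ref{N'K'gelfand}.
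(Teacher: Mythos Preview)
Your proposal is correct and follows essentially the same approach as the paper: invoke Lemma \ref{N'K'gelfand} with $\fs=\ft_t$, $K'=K_t$, and verify its genericity hypothesis via Corollary \ref{Slice-cor}(i). The paper's proof is a single line citing exactly these two ingredients; your version simply makes explicit the auxiliary checks (that $K_t$ stabilises $\ft_t$, and that $t\in\fw_t$ so that $t+S_t$ is open in $\fw_t$), both of which the paper records in the surrounding text rather than in the proof itself.
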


\begin{proof}
By Corollary \ref{Slice-cor}, Part (i), we can apply Lemma \ref{N'K'gelfand} with $\fw'=\fw_t$.
\end{proof}

We will denote by $\Sigma$, resp. $\Sigma^t$, the Gelfand spectrum of $(N,K)$, resp.  of its quotient pair $(N_t,K_t)$. We~also write $\cR^t$ instead of $\cR^{\ft_t}$ and
$$
\Lambda^t:\Sigma^t\to \Sigma
$$
instead of $\Lambda^{\ft_t}$ for the continuous mapping defined 
in Proposition \ref{propradon*}.

We denote by $\cG_t$ the Gelfand spectrum of the pair $(N_t,K_t)$.
By Proposition \ref{propradon*}, 
we have, for any  $F\in\cS(N)^K$,
\begin{equation}\label{G-G_t}
\cG_t(\cR^t F)=(\cG F)\circ\Lambda^t\ .
\end{equation}

Also recall that, 
by Proposition \ref{Lambda-smooth}, 
$\Lambda^t$ is the restriction of a polynomial map when regarded as a map from $\Sigma^t_{\cD^t}$ to $\Sigma_\cD$, for given generating systems $\cD$ and $\cD^t$ in $\bD(N)^K$ and $\bD(N_t)^{K_t}$ respectively.

\subsection{Choice of  Hilbert bases on $\fn$ and $\fn_t$}
\label{special}

We continue with the setting of Section \ref{subsec_quotient_pairs}.
We choose Hilbert bases ${\boldsymbol\rho}$ and
${\boldsymbol\rho}^t$
of $(N,K)$ and $(N_t,K_t)$ 
satisfying (i)-(iv) of Section \ref{subsec_hilbert_base}.
Denote by ${\boldsymbol\rho}_{(k)}$, resp. ${\boldsymbol\rho}^t_{(k)}$ the invariants in ${\boldsymbol\rho}$, resp. ${\boldsymbol\rho}^t$, which have degree $k$ in the $\fv$-variables  (in the ordinary sense). Then ${\boldsymbol\rho}_{(0)}={\boldsymbol\rho}_\fw$ and ${\boldsymbol\rho}^t_{(0)}={\boldsymbol\rho}^t_{\fw_t}$.
 We~label the elements in ${\boldsymbol\rho}_{(k)}$  
  as $(\rho_{k,1},\dots,\rho_{k,d_{(k)}})$.
  Similarly 
${\boldsymbol\rho}^t_{(k)}=(\rho^t_{k,1},\dots,\rho^t_{k,d^t_{(k)}})$.

The basic properties of Hilbert bases implies that the restriction of an element of ${\boldsymbol\rho}$ to $\fn_t$
can be expressed as a polynomial in the elements of 
${\boldsymbol\rho}^t$. 
Necessarily, for polynomials only in $\fw$, the restrictions must be of the~form:
\begin{equation}\label{Q0j}
{\rho_{0,j}}_{|_{\fw_t}}=
Q_{0,j}\circ{\boldsymbol\rho}^t_{(0)}
\end{equation}
whereas for non-zero $\fv$-degrees $k\in \bN$, we have 
\begin{equation}\label{Qkj}
{\rho_{k,j}}_{|_{\fn_t}}=
\sum_{\ell=1}^{d_{(k)}^t}
(Q_{k,j,\ell}\circ{\boldsymbol\rho}^t_{(0)}) \
\rho^t_{k,\ell}
\ 
+\ 
R_{k,j} \circ({\boldsymbol\rho}^t_{(0)}, \ldots, 
{\boldsymbol\rho}^t_{(k-1)} )
\end{equation}
where the $Q$'s and the $R$'s are polynomials, with
$R_{k,j} ({\boldsymbol\rho}^t_{(0)}, \ldots, 
{\boldsymbol\rho}^t_{(k-1)} )$
  of degree $k$ in $\fv$. 
  
 In an analogous way, 
we split $\cD=\lambda'_N(\boldsymbol\rho)$ and the corresponding $d$-tuples of eigenvalues as
\begin{equation}
\label{eq_splitting_k}
\cD=(\cD_{(k)})_{k\in \bN},
\quad 
\xi=(\xi_{(k)})_{k\in \bN} \in \bR^d,
\end{equation}
with $\cD_{(k)}=\la'_N({\boldsymbol \rho}_{(k)})$, and similarly, with $\cD^t=\lambda'_{N_t}({\boldsymbol\rho}^t)$,
\begin{equation}
\label{eq_splitting_kt}
\cD^t=(\cD_{(k)}^t)_{k\in \bN},
\quad 
\xi^t=(\xi^t_{(k)})_{k\in \bN} \in \bR^{d_t}.
\end{equation}

We apply the symmetrisation $\lambda'_{N_t}$ to both sides of \eqref{Q0j} and \eqref{Qkj}.

By Lemma \ref{lem_sym}, for polynomials $p$ on $\fn$ which only depend on $w$, we have the identity $\lambda'(p)=p(i\inv \nabla_\fw)$, and similarly on $\fn_t$. It follows that
\begin{equation}\label{D0j}
\cR^t D_{0,j}=
Q_{0,j}(\cD_{(0)}^t).
\end{equation}

For  $\fv$-degrees $k\ne0$, we have 
\begin{equation}\label{Dkj}
\cR^t D_{k,j}=
\sum_{\ell=1}^{d_{(k)}^t}
Q_{k,j,\ell}(\cD^t_{(0)}) \
D^t_{k,\ell}
\ 
+\  
R'_{k,j} (\cD_{(0)}^t,\ldots,\cD^t_{(k-1)}).
\end{equation}
The polynomials $Q_{0,j}$ 
in \eqref{D0j} and $Q_{k,j,l}$ in \eqref{Dkj}
are the same as in \eqref{Q0j} and \eqref{Qkj} respectively. 
However the polynomials $R_{k,j}$ in \eqref{Qkj} have to be modified  to take into account the lower-order terms in Lemma \ref{lem_sym} produced by symmetrisation.
Degree considerations 
imply that $\lambda'\big(R_{k,j} ( {\boldsymbol\rho}^t_{(0)}, \ldots, 
{\boldsymbol\rho}^t_{(k-1)} )\big)$ is a polynomial $R'_{k,j}$ in $\cD^t_{(0)}, \ldots,\cD^t_{(k-1)}$ only.

As in Section \ref{subsubsec_sph+cG}, 
to each  spherical function $\ph^t\in \Sigma(N_t,K_t)$, 
we associate the $\cD^t$-eigenvalues $\xi^t=\xi^t(\ph^t) \in \bR^{d_t}$
which we split into $\xi^t=(\xi^t_{(k)})_{k\in \bN}$,
see \eqref{eq_splitting_kt}.
Applying $\ph^t$ to \eqref{D0j} and \eqref{Dkj}, 
we obtain by \eqref{radon-eigenvalues}:

By \eqref{radon-eigenvalues}, the relations \eqref{D0j} and \eqref{Dkj} extend to relations between tuples of eigenvalues of pairs of corresponding spherical functions under $\Lambda^t$. Precisely, given $\xi^t\in\Sigma^t_{\cD^t}$ as in \eqref{eq_splitting_kt}, the point $\xi=\Lambda^t\xi^t\in \Sigma_\cD$ is given by
\begin{equation}
\label{eq_xi2xit}
\xi_{0,j} = 
Q_{0,j}(\xi_{(0)}^t) 
\quad\mbox{and}\quad
\xi_{k,j}  = 
\sum_{\ell=1}^{d_{(k)}^t}
Q_{k,j,\ell}(\xi_{(0)}^t) \xi_{k,\ell} ^t
\ +\ 
R'_{k,j} (\xi_{(0)} ^t,\ldots, \xi_{(k)}^t).
\end{equation}

We have obtained the following property:
\begin{lemma}
\label{lem_xi2xit}
When  realising the spectra of $(N,K)$ and $(N_t,K_t)$ as  
 $\Sigma_\cD$ and $\Sigma^t_{\cD^t}$ respectively, 
the map 
$\Lambda^t$ is  given by
$\Lambda^t(\xi^t)= \xi$
where $\xi^t=(\xi^t_{(k)})_{k\in \bN}$, $\xi=(\xi_{(k)})_{k\in \bN}$
as in \eqref{eq_splitting_k}, \eqref{eq_splitting_kt}
and the components of each $\xi_{(k)}$ are given by~\eqref{eq_xi2xit}.
\end{lemma}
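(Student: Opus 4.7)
The plan is to mirror the derivation that the excerpt has essentially laid out before the statement, but to present it as three clearly separated steps: first establishing the polynomial identities \eqref{Q0j}--\eqref{Qkj} at the level of restrictions of Hilbert basis elements, then lifting them to identities between the generating differential operators \eqref{D0j}--\eqref{Dkj} via symmetrisation and $\cR^t$, and finally passing to eigenvalues via \eqref{radon-eigenvalues}.

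\textbf{Step 1 (polynomial identities on $\fn_t$).} Because ${\rm proj}_t$ is an orthogonal projection onto $\fw_t$, the restriction of $\rho_{0,j}\in\cP(\fw)^K$ to $\fw_t$ is a $K_t$-invariant polynomial, and since ${\boldsymbol\rho}^t_{(0)}={\boldsymbol\rho}^t_{\fw_t}$ is a Hilbert basis of $\cP(\fw_t)^{K_t}$, there is a polynomial $Q_{0,j}$ with ${\rho_{0,j}}_{|\fw_t}=Q_{0,j}\circ{\boldsymbol\rho}^t_{(0)}$. For $k\ge 1$, the restriction ${\rho_{k,j}}_{|\fn_t}$ has total $\fv$-degree $\le k$; writing it in the Hilbert basis ${\boldsymbol\rho}^t$ and collecting terms of $\fv$-degree exactly $k$ (which must be linear in ${\boldsymbol\rho}^t_{(k)}$, since elements of ${\boldsymbol\rho}^t_{(k')}$ for $k'<k$ have strictly smaller $\fv$-degree), one gets \eqref{Qkj}. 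The residual polynomial $R_{k,j}$ collects all terms of $\fv$-degree $<k$, hence depends only on ${\boldsymbol\rho}^t_{(0)},\ldots,{\boldsymbol\rho}^t_{(k-1)}$.

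\textbf{Step 2 (lifting to differential operators via $\cR^t$).} Apply the symmetrisation $\lambda'_{N_t}$ to \eqref{Q0j}--\eqref{Qkj} and use \eqref{opradon*1}, which gives $\cR^t\lambda'_N(p)=\lambda'_{N_t}(p_{|\fn_t})$. For \eqref{D0j}, both sides involve only polynomials in the central variables $\fw_t$, so Lemma \ref{lem_sym}(4) makes symmetrisation a ring homomorphism and yields the identity on the nose. For \eqref{Dkj}, the dominant term $\sum_\ell (Q_{k,j,\ell}\circ{\boldsymbol\rho}^t_{(0)})\rho^t_{k,\ell}$ is a product of a polynomial in $\fw_t$-variables with an element of $\cP(\fn_t)$, so Lemma \ref{lem_sym}(4) again gives $\lambda'_{N_t}(Q_{k,j,\ell}({\boldsymbol\rho}^t_{(0)})\rho^t_{k,\ell})=Q_{k,j,\ell}(\cD^t_{(0)})D^t_{k,\ell}$. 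The symmetrisation of $R_{k,j}({\boldsymbol\rho}^t_{(0)},\ldots,{\boldsymbol\rho}^t_{(k-1)})$ is not equal to $R_{k,j}(\cD^t_{(0)},\ldots,\cD^t_{(k-1)})$ in general, but by Lemma \ref{lem_sym}(1) the two differ by a differential operator of strictly lower order; absorbing this discrepancy redefines $R_{k,j}$ as a new polynomial $R'_{k,j}$ in the \emph{same} variables $\cD^t_{(0)},\ldots,\cD^t_{(k-1)}$, giving \eqref{Dkj}.

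\textbf{Step 3 (pass to eigenvalues).} Apply \eqref{radon-eigenvalues}: for every $D\in\bD(N)^K$ and every $\ph^t\in\Sigma(N_t,K_t)$, $\xi(D,\Lambda^t\ph^t)=\xi(\cR^t D,\ph^t)$. Specialising to $D=D_{0,j}$ and using \eqref{D0j} gives $\xi_{0,j}=Q_{0,j}(\xi^t_{(0)})$. Specialising to $D=D_{k,j}$ and using \eqref{Dkj} gives $\xi_{k,j}=\sum_\ell Q_{k,j,\ell}(\xi^t_{(0)})\xi^t_{k,\ell}+R'_{k,j}(\xi^t_{(0)},\ldots,\xi^t_{(k-1)})$, which is exactly \eqref{eq_xi2xit}.

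\textbf{Main obstacle.} The only delicate point is Step 2: symmetrisation is not multiplicative, so one must be careful to control the ``commutator defect'' when passing from polynomial identities to operator identities. What saves the argument is the combination of Lemma \ref{lem_sym}(4) (the product is exact as soon as one factor lies in $\cP(\fz)$, and both $\cP(\fw_t)$ and the $\fw_t$-factors of the dominant term qualify) and Lemma \ref{lem_sym}(1) (the defect of a general product is of strictly lower degree, which keeps the correction inside the subalgebra generated by $\cD^t_{(0)},\ldots,\cD^t_{(k-1)}$ and so can be folded into $R'_{k,j}$ without spoiling the triangular structure of \eqref{eq_xi2xit}).
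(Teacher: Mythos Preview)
Your proof is correct and follows essentially the same three-step route that the paper lays out inline immediately before stating the lemma: polynomial identities \eqref{Q0j}--\eqref{Qkj}, their lift via $\cR^t$ and symmetrisation to \eqref{D0j}--\eqref{Dkj}, and then passage to eigenvalues via \eqref{radon-eigenvalues}.

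One small inaccuracy in your Step~1: the restriction ${\rho_{k,j}}_{|\fn_t}$ is homogeneous of $\fv$-degree exactly $k$ (restricting in the $\fw$-variables does not touch the $\fv$-degree), so $R_{k,j}$ is not ``collecting terms of $\fv$-degree $<k$'' --- there are none. Rather, monomials in the ${\boldsymbol\rho}^t$ of $\fv$-degree $k$ that involve some $\rho^t_{k,\ell}$ must be linear in it with coefficient in $\cP(\fw_t)^{K_t}$ (any additional factor of positive $\fv$-degree would exceed $k$), while the remaining degree-$k$ monomials are products of elements of ${\boldsymbol\rho}^t_{(1)},\ldots,{\boldsymbol\rho}^t_{(k-1)}$ with coefficients in ${\boldsymbol\rho}^t_{(0)}$; this is what $R_{k,j}$ collects. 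The paper records this by saying that $R_{k,j}({\boldsymbol\rho}^t_{(0)},\ldots,{\boldsymbol\rho}^t_{(k-1)})$ has degree $k$ in $\fv$. This correction does not affect the rest of your argument.
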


\subsection{Slices and radialisation applied to quotient pairs}
\label{subsec_rad_quotient}

We will apply the results of Sections \ref{subsec_quotient} and~\ref{sec-slices}  to prove that the map $\Lambda^t$ can be locally inverted by means of smooth functions on the set of spherical functions $\ph_{\zeta,\omega,\mu}$ with $\zeta$ close to $t$. 

We continue with the notation of  Sections \ref{subsec_quotient_pairs} and \ref{special}
regarding quotient pairs $(N_t,K_t)$
and the choice of~Hilbert bases 
${\boldsymbol\rho}$ and ${\boldsymbol\rho}^t$.
 For each $t\in\fw\setminus\check\fw$,
  let $S_t\subset\fw_t$ be a slice at $t$ in~$\fw$. 

The first step consists in expressing locally the elements of ${\boldsymbol\rho}^t$ as smooth functions of ${\boldsymbol\rho}$.
 We decompose $\xi\in \bR^d$ 
as $\xi=(\xi_{(k)})_{k\ge0}$,  $\xi^t\in \bR^{d_t}$ as $(\xi_{(k)}^t)_{k\ge0}$ as in Section \ref{subsec_quotient}, and express $\Phi(\xi)$ as $\big(\Phi_{(k)}(\xi)\big)_{k\ge0}$.

\begin{proposition}\label{Qinv}
Let $(N,K)$ be a nilpotent Gelfand pair, $t\in\fw\setminus\check\fw$ and $U$ be a Euclidean neighbourhood of~$t$,  $K_t$-invariant and relatively compact in $t+S_t$. 

 Then there exists a slowly increasing smooth function 
 $\Phi:\bR^{d} \to \bR^{d^t}$ such that 
 $$
 {\boldsymbol\rho}^t=\Phi\circ {\boldsymbol\rho}
 \quad\mbox{on}\  \fv\times U,
 $$
 and, denoting by $\Phi_{(k)}$ the component of $\Phi$ with values in $\bR^{d_{(k)}^t}$,
 \begin{enumerate}
 \item[\rm(1)] $\Phi_{(0)}$ only depends  on $\xi_{(0)}$,
  \item[\rm(2)]\label{2} for $k>0$,
$\Phi_{(k)}$  only depends  on $(\xi_{(0)},\ldots, \xi_{(k)})$
 and each of its component has the form
\begin{equation}\label{Phi-kj}
\Phi_{k,j}(\xi)=
\Phi_{k,j}(\xi_{(0)},\ldots, \xi_{(k)})=\sum_{\ell=1}^{d_{k_{(0)}}}\Phi_{k,j,\ell}(\xi_{(0)})\xi_{k,\ell}+\sum_\al\Psi_{k,j,\al}(\xi_{(0)})\xi_{(1)}^{\al_1}\xi_{(2)}^{\al_2}\cdots \xi_{(k-1)}^{\al_{(k-1)}},
\end{equation}
where $\Phi_{k,j,\ell},\Psi_{k,j,\al}\in \cO(\bR^{d_{(0)}})$
and the summation is extended to those $\al$ such that the polynomial ${\boldsymbol\rho}_{(1)}^{\al_1}{\boldsymbol\rho}_{(2)}^{\al_2}\cdots {\boldsymbol\rho}_{(k-1)}^{\al_{(k-1)}}$ has degree $k$ in $v$.
 \end{enumerate}
 \end{proposition}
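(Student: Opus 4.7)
The plan is to treat the components $\Phi_{(k)}$ inductively on the $\fv$-degree $k$: the base case $k=0$ is a direct application of Corollary~\ref{cor_rad}, and for $k\ge 1$ I use a modified radialisation in which the cutoff is in the $\fw$-variable only, so that polynomial growth in $v$ is preserved. The bi-degree structure of ${\boldsymbol\rho}$ and ${\boldsymbol\rho}^t$ will then force, via a scaling argument in $v$, the explicit form \eqref{Phi-kj}.

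For $k=0$, I apply Corollary \ref{cor_rad} to the orthogonal $K$-action on $\fw$ at $t$: the stabiliser is $K_t$, the normal space is $\fw_t$, and a slice is the given $S_t$. The relevant Hilbert bases are $\boldsymbol\rho_{(0)}=\boldsymbol\rho_\fw$ on $\fw$ and $\boldsymbol\rho^t_{(0)}=\boldsymbol\rho^t_{\fw_t}$ on $\fw_t$, yielding a slowly increasing smooth $\Phi_{(0)}$ depending only on $\xi_{(0)}$. For $k\ge 1$, the key observation is that the ``cylinder'' $\fv\oplus(t+S_t)$ still has the slice property for the $K$-action on $\fn$: for any $(v,w)$ in it, the stabiliser in $K$ is contained in $K_w\subset K_t$ by Corollary \ref{Slice-cor}(i), and two points of the cylinder are $K$-conjugate iff they are $K_t$-conjugate (by projecting to $\fw$ and applying Corollary \ref{Slice-cor}(ii)). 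Pick a $K_t$-invariant $\chi\in C^\infty_c(t+S_t)$ with $\chi\equiv 1$ on $U$. For each $\rho^t_{k,l}$, the product $\chi(w)\rho^t_{k,l}(v,w)$ is $K_t$-invariant on the cylinder, radialises to a $K$-invariant smooth function on $K\cdot(\fv\oplus(t+S_t))$, and extends by zero to a $K$-invariant smooth function on all of $\fn$. Applying G.~Schwarz's theorem, followed by the dyadic-scaling step used in the proof of Corollary~\ref{cor_rad}, I obtain a slowly increasing smooth $\tilde\Phi_{k,l}:\bR^d\to\bR$ with $\rho^t_{k,l}(v,w)=\tilde\Phi_{k,l}(\boldsymbol\rho(v,w))$ on $\fv\times U$.

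To extract the explicit form \eqref{Phi-kj} from $\tilde\Phi_{k,l}$, I replace $v$ by $\lambda v$ and use that $\rho^t_{k,l}$ is homogeneous of degree $k$ in~$v$ while each $\rho_{k',j}$ is homogeneous of degree $k'$ in~$v$; the identity becomes
\begin{equation*}
\lambda^k\rho^t_{k,l}(v,w)
=\tilde\Phi_{k,l}\big(\boldsymbol\rho_{(0)}(w),\lambda\boldsymbol\rho_{(1)}(v,w),\lambda^2\boldsymbol\rho_{(2)}(v,w),\ldots\big),
\end{equation*}
valid for every $\lambda\in\bR$ and $(v,w)\in\fv\times U$. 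Taylor-expanding the right-hand side in $\lambda$ at $0$ and matching the coefficient of $\lambda^k$ produces exactly an expression of the form \eqref{Phi-kj}: multi-indices $\alpha$ with $\alpha_k=1$ and $\alpha_{k'}=0$ otherwise give the linear term in $\xi_{(k)}$, while those with $\alpha_k=0$ and $\sum_{k'<k}k'\alpha_{k'}=k$ give the second sum. The coefficients $\Phi_{k,j,\ell}(\xi_{(0)})$ and $\Psi_{k,j,\alpha}(\xi_{(0)})$ are partial derivatives of $\tilde\Phi_{k,l}$ in the $\xi_{(>0)}$-variables evaluated at $(\xi_{(0)},0,\ldots,0)$, hence lie in $\cO(\bR^{d_{(0)}})$; redefining $\Phi_{k,l}$ to be this polynomial-in-$(\xi_{(1)},\ldots,\xi_{(k)})$ expression (which only alters it off the image of $\boldsymbol\rho$) yields the required $\Phi_{(k)}$. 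The main technical point is the cylindrical slice argument of the second paragraph: it is what allows a cutoff only in $w$, thereby preserving the polynomial growth in $v$ needed for $\Phi$ to be slowly increasing and forcing the correct polynomial dependence on $(\xi_{(1)},\ldots,\xi_{(k)})$.
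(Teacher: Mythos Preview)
Your overall strategy coincides with the paper's: use $\fv\times S_t$ as a slice at $(0,t)$, cut off only in the $\fw$-variable, and radialise so that $(\chi\rho^t_{k,\ell})^{\rm rad}$ remains a homogeneous polynomial of degree $k$ in $v$ for every $w$. Your Taylor-expansion-in-$\lambda$ device is a perfectly valid way to make explicit what the paper calls ``homogeneity considerations'': it correctly isolates the multi-indices $\alpha$ with $\sum k'\alpha_{k'}=k$ and produces the shape \eqref{Phi-kj}.

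There is, however, a genuine gap in the step ``followed by the dyadic-scaling step used in the proof of Corollary~\ref{cor_rad}, I obtain a slowly increasing smooth $\tilde\Phi_{k,l}$''. The dyadic-scaling argument behind Corollary~\ref{cor_rad} works because the set $U$ there is \emph{compact}, so ${\boldsymbol\rho}(U)$ is compact and one is free to impose homogeneity of $\Psi$ under the dilations $D(\delta)$ away from it. In your situation the relevant set is $\fv\times U$, which is noncompact, and the function $(\chi\rho^t_{k,\ell})^{\rm rad}$ is \emph{not} homogeneous under the group dilations $D(\delta)$ (the cutoff $\chi$ in $w$ destroys homogeneity in the $\fw$-direction). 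So the dyadic scaling does not apply, and you have not shown that $\tilde\Phi_{k,l}\in\cO(\bR^d)$. Consequently the coefficients $g_\alpha(\xi_{(0)})=\frac1{\alpha!}\partial^\alpha_{\xi_{(>0)}}\tilde\Phi_{k,l}(\xi_{(0)},0,\dots,0)$ are only known to be smooth, not to lie in $\cO(\bR^{d_{(0)}})$ as the statement requires.

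The paper sidesteps this by not going through a global $\tilde\Phi_{k,l}$ at all. It observes directly that $(\chi\rho^t_{k,\ell})^{\rm rad}\in\big(\cP^k(\fv)\otimes\cO(\fw)\big)^K$ and invokes Lemma~\ref{tensor-invariants}, which gives a decomposition $\sum_\alpha u_\alpha(w)\,{\boldsymbol\rho}_{(1)}^{\alpha_1}\cdots{\boldsymbol\rho}_{(k)}^{\alpha_k}$ with $u_\alpha\in\cO(\fw)^K$. One then writes each $u_\alpha$ as a slowly increasing function of ${\boldsymbol\rho}_{(0)}$ by applying the standard (homogeneous) argument on $\fw$, where ${\boldsymbol\rho}_{(0)}$ \emph{is} homogeneous and the dyadic scaling is legitimate. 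Your argument is easily repaired by inserting this step: after your Taylor expansion has identified the correct monomials in $(\xi_{(1)},\dots,\xi_{(k)})$, replace your $g_\alpha$ by the $\cO(\bR^{d_{(0)}})$-coefficients supplied by Lemma~\ref{tensor-invariants}.
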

 
The proof of Proposition \ref{Qinv}, given below, makes use of the following consequence of the G. Schwarz theorem \cite{Schw}, which can be shown by adapting the arguments in
\cite[Prop. 2.1]{FRY2}. We recall that the space $\cO(\fw)$ of slowly increasing smooth functions has been defined in~\eqref{slowly increasing}.

\begin{lemma}\label{tensor-invariants}
We have the identity
$$
\big(\cP(\fv)\otimes \cO(\fw)\big)^K=\cP(\fn)^K \cO(\fw)^K,
$$
i.e., every function $\psi\in\big(\cP(\fv)\otimes \cO(\fw)\big)^K$ can be decomposed as a finite sum
\begin{equation}\label{psi-invariant}
\psi(v,w)=\sum_\al u_\al({\boldsymbol\rho}_{(0)}){\boldsymbol\rho}_{(1)}^{\al_1}{\boldsymbol\rho}_{(2)}^{\al_2}\cdots{\boldsymbol\rho}_{(\bar k)}^{\al_{\bar k}},
\end{equation}
with $u_\al\in \cO(\fw)^K$.
\end{lemma}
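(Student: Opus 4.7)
The inclusion $\cP(\fn)^K\cO(\fw)^K\subseteq (\cP(\fv)\otimes\cO(\fw))^K$ is immediate since both factors on the left are $K$-invariant and $\cP(\fn)\subseteq \cP(\fv)\otimes\cP(\fw)\subseteq \cP(\fv)\otimes\cO(\fw)$; the content of the lemma is the reverse inclusion together with the explicit form \eqref{psi-invariant}. I would begin by reducing to fixed $v$-degree. Since the $K$-action preserves the $v$-grading of $\cP(\fv)\otimes\cO(\fw)$, every $v$-homogeneous component of $\psi$ is again $K$-invariant; as $\psi$ has only finitely many non-zero components, it suffices to decompose an arbitrary $\psi\in (\cP^k(\fv)\otimes\cO(\fw))^K$.

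Next I would establish the purely polynomial version. If $\psi\in (\cP^k(\fv)\otimes\cP(\fw))^K$, then $\psi$ is a $K$-invariant polynomial on $\fn$, hence a polynomial in the Hilbert basis $\boldsymbol\rho$. Using the splitting $\boldsymbol\rho=({\boldsymbol\rho}_{(0)},{\boldsymbol\rho}_{(1)},\ldots,{\boldsymbol\rho}_{(\bar k)})$ from Section~\ref{special} and collecting monomials by total $v$-degree, the only contributions to the $v$-degree-$k$ part are of the form $u_\alpha(\boldsymbol\rho_{(0)})\,\boldsymbol\rho_{(1)}^{\alpha_1}\cdots\boldsymbol\rho_{(\bar k)}^{\alpha_{\bar k}}$ with the multi-indices $\alpha$ such that the product has $v$-degree exactly $k$, where the $u_\alpha$ are polynomials (so $u_\alpha(\boldsymbol\rho_{(0)})\in\cP(\fw)^K$). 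In particular, the monomials $\{\boldsymbol\rho^\alpha\}$ generate $(\cP^k(\fv)\otimes\cP(\fw))^K$ as a $\cP(\fw)^K$-module.

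The final step, which is the main obstacle, is to upgrade the polynomial coefficients $u_\alpha$ to $\cO(\fw)^K$ when $\psi$ lies only in $(\cP^k(\fv)\otimes\cO(\fw))^K$. The module-generation statement just obtained combined with a parametric form of G.~Schwarz's theorem~\cite{Schw} produces coefficients $u_\alpha\in C^\infty(\fw)^K$, identified via $\boldsymbol\rho_{(0)}$ with smooth functions on $\boldsymbol\rho_{(0)}(\fw)$. To secure the slowly increasing property on all derivatives required by $\cO(\fw)^K$, I would use the dyadic-scaling argument of \cite{ADR2}: choose a dyadic partition of unity on $\fw$, apply the smooth decomposition on each dyadic annulus, and use the homogeneity of the $\boldsymbol\rho_{(k)}$ under the group dilations \eqref{eq_gr_dil} to rescale each piece to a unit annulus. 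The uniform bounds on the resulting coefficients, inherited from the slowly increasing bounds on $\psi$, reassemble into the required polynomial bounds on each $\de^\al u_\alpha$. The passage through the parametric Schwarz theorem with explicit quantitative control is the delicate point; it can be carried out by a straightforward adaptation of \cite[Prop.~2.1]{FRY2}.
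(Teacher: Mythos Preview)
Your proposal is correct and follows essentially the same line as the paper, which does not give a detailed proof but merely attributes the lemma to the G.~Schwarz theorem together with an adaptation of \cite[Prop.~2.1]{FRY2}. Your sketch makes this attribution explicit: the reduction to fixed $v$-degree, the polynomial module-generation over $\cP(\fw)^K$, the passage to smooth coefficients via the equivariant form of Schwarz's theorem, and the dyadic-scaling control of growth from \cite{ADR2} are exactly the ingredients behind that reference.
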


\begin{proof}[Proof of Proposition \ref{Qinv}]
The existence of 
$\Phi_{(0)}$ follows directly from
Corollary \ref{cor_rad} applied to $W=\fw$ and the Hilbert bases ${\boldsymbol\rho}_{(0)}$, ${\boldsymbol\rho}_{(0)}^t$. 

Formula \eqref{Phi-kj} is proved with the following modification of the proof of  Corollary \ref{cor_rad}. 
For $k>0$ we regard $\fv\times S_t$ as a slice  at $(0,t)$ in $\fv\oplus\fw$.
Let $\chi\in C^\infty_c(t+S_t)$ be $K_t$-invariant and equal to 1 on  $U$. 
The~radialisation of $\chi\rho^t_{k,\ell}$
is a smooth $K$-invariant function on $\fv\times \fw$.

Since $(\chi\rho^t_{k,\ell})^{\rm rad}(v,gw)=\chi(w)\rho^t_{k,\ell}(g^{-1}v,w)$ for $w\in t+S_t$ and $g\in K$, $(\chi\rho^t_{k,\ell})^{\rm rad}$ is   a homogeneous polynomial of degree $k$ in the $\fv$-variable for every $w$, i.e., in $\big(\cP(\fv)\otimes\cO(\fw)\big)^K$. Then \eqref{Phi-kj} follows from Lemma \ref{tensor-invariants} and homogeneity considerations.
\end{proof}

The parametrisation of bounded spherical functions in \eqref{Sigma-triples}
together with the slice properties in Theorem \ref{Slice-Thm}
and Corollary \ref{Slice-cor}
 implies the `local bijectivity' of $\Lambda^t$ between  the subsets of the two spectra where the~parameter $\zeta$ can be taken
 in $t+S_t$.

\begin{proposition}\label{bijection}
 For each $t\in\fw\setminus\check\fw$,
  let $S_t\subset\fw_t$ be a slice at $t$ in $\fw$.
The map $\Lambda^t$ is a bijection from the~set
$$
A^t_t=\{\ph^t_{\zeta,\omega,\mu}:\zeta\in t+S_t, \omega\in \fr_\zeta, \mu \in \fX_{\zeta,\omega}\}\subset\Sigma^t
$$
onto the set
$$
A_t=\{\ph_{\zeta,\omega,\mu}:\zeta\in t+S_t, \omega\in \fr_\zeta, \mu \in \fX_{\zeta,\omega}\}\subset\Sigma. 
$$
\end{proposition}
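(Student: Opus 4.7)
My plan is to build the bijection out of Proposition \ref{lem_propradon*}, which already gives the formula $\Lambda^t\ph^t_{\zeta,\omega,\nu}=\ph_{\zeta,\omega,\mu}$ (with $V(\mu)\supseteq V'(\nu)$), combined with the slice properties of Theorem~\ref{Slice-Thm} and Corollary~\ref{Slice-cor} applied to the $K$-action on $\fw$ at the base point $t$. The key elementary input is that for every $\zeta\in t+S_t$, Corollary \ref{Slice-cor}(i) gives $K_\zeta=(K_t)_\zeta\subset K_t$, and consequently, for every $\omega\in\fr_\zeta$,
\[
K_{\zeta,\omega}=K_\zeta\cap K_\omega=(K_t)_\zeta\cap K_\omega=(K_t)_{\zeta,\omega}.
\]
This coincidence of stabilisers is the engine driving the whole argument.

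Once this is in place, the multiplicity-free decomposition of $\cH_\zeta$ under $K_{\zeta,\omega}$ appearing in \eqref{multiplicity-free} is literally the same decomposition used to define the spherical functions of $(N_t,K_t)$ associated with $(\zeta,\omega)$; in particular $\fX_{\zeta,\omega}=\fX^t_{\zeta,\omega}$, and for each such parameter $\mu=\nu$ we have $V(\mu)=V'(\nu)$. By Proposition \ref{lem_propradon*}, this forces $\Lambda^t\ph^t_{\zeta,\omega,\mu}=\ph_{\zeta,\omega,\mu}$, which immediately yields the surjectivity of $\Lambda^t\colon A^t_t\to A_t$: any $\ph_{\zeta,\omega,\mu}\in A_t$ is hit by $\ph^t_{\zeta,\omega,\mu}\in A^t_t$.

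For injectivity, suppose $\Lambda^t\ph^t_{\zeta_1,\omega_1,\mu_1}=\Lambda^t\ph^t_{\zeta_2,\omega_2,\mu_2}$ with $\zeta_i\in t+S_t$. Then $\ph_{\zeta_1,\omega_1,\mu_1}=\ph_{\zeta_2,\omega_2,\mu_2}$ in $\Sigma$, and the $K$-equivariance \eqref{eq_ph_Kequivariance} together with the uniqueness of the parametrisation \eqref{Sigma-triples} yields a $k\in K$ sending the triple $(\zeta_1,\omega_1,\mu_1)$ to $(\zeta_2,\omega_2,\mu_2)$. In particular $k\zeta_1=\zeta_2$; but by Corollary \ref{Slice-cor}(ii) two points of $t+S_t$ are $K$-conjugate iff they are $K_t$-conjugate, so $k$ may be taken in $K_t$. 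Then the same $k\in K_t$ intertwines the two triples as parameters for $(N_t,K_t)$, so $\ph^t_{\zeta_1,\omega_1,\mu_1}=\ph^t_{\zeta_2,\omega_2,\mu_2}$.

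The only step that needs care is the stabiliser identity in the first paragraph and the matching of the two parametrisations via the identification $\mu=\nu$; everything else is a direct application of the slice theorem and of the results of Section \ref{subsec_quotient}. I do not foresee a substantial obstacle, as Propositions \ref{propradon*} and \ref{lem_propradon*} have already done the representation-theoretic work; what remains is purely bookkeeping of orbits and stabilisers, which the slice property makes transparent.
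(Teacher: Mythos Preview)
Your proposal is correct and follows essentially the same approach as the paper: both proofs hinge on the stabiliser identity $K_{\zeta,\omega}=(K_t)_{\zeta,\omega}$ for $\zeta\in t+S_t$ (from Corollary~\ref{Slice-cor}(i)), which forces the two multiplicity-free decompositions of $\cH_\zeta$ to coincide and makes $\Lambda^t$ act as the identity on parameters. Your injectivity argument via Corollary~\ref{Slice-cor}(ii) is in fact more explicit than the paper's, which simply states that the $K$-equivariance \eqref{eq_ph_Kequivariance} ``easily implies the statement''; note that your phrase ``$k$ may be taken in $K_t$'' can be strengthened to ``$k$ must lie in $K_t$'', since $k'^{-1}k\in K_{\zeta_1}\subset K_t$ for any $k'\in K_t$ with $k'\zeta_1=\zeta_2$.
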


\begin{proof}
For $\zeta\in t+ S_t$, $K_\zeta=(K_t)_\zeta$ by Corollary~\ref{Slice-cor}~(i). Consequently, $K_{(\zeta,\omega)}=(K_t)_{(\zeta,\omega)}$ for any $\omega\in \fr_\zeta$, 
so that the decomposition of $\cH_\zeta$ is the same.
The $K$-equivariance of spherical function, see \eqref{eq_ph_Kequivariance}, implies that 
the spherical function $\ph_{\zeta,\omega,\mu}\in \Sigma$ 
given  in \eqref{trace}
with $\zeta\in K\cdot (t+ S_t)$, i.e. $\zeta=k\zeta_t$ with $\zeta_t\in t+S_t\subset \fw_t $ satisfy
$$
\ph_{\zeta,\omega,\mu}
=
\ph_{k\zeta_t,\omega,\mu}
=
\ph_{\zeta_t,k^{-1}\omega,\mu}
=
\Lambda^t 
\ph_{\zeta_t,k^{-1}\omega,\mu}.
$$
This easily implies the statement.
\end{proof}

Note that Lemma \ref{lem_xi2xit} already described the map $\Lambda_t$, but in terms of the embeddings $\Sigma_\cD$, $\Sigma^t_{\cD^t}$ of the two spectra, whereas Proposition \ref{bijection} states the existence of the inverse map $\Lambda_t^{-1}:A_t\longrightarrow A_t^t$. 
In the next section we described 
$\Lambda_t^{-1}:A_t\longrightarrow A_t^t$
 in terms of the embeddings $\Sigma_\cD$, $\Sigma^t_{\cD^t}$.

\subsection{Extension of the symmetrisation}
\label{subsec_extlambda'}

The next step consists in transforming the relations between the two Hilbert bases ${\boldsymbol\rho},{\boldsymbol\rho}^t$ obtained in Proposition \ref{Qinv} into relations among the corresponding differential operators. In order to do so, we need to extend the notion of symmetrisations to functions that are not polynomials in $w$.

We continue with the notation of  Section \ref{subsec_quotient_pairs}
regarding quotient pairs $(N_t,K_t)$ of a given nilpotent pair $(N,K)$
and the choice of Hilbert bases 
${\boldsymbol\rho}$ and ${\boldsymbol\rho}^t$
and their symmetrisations $\cD=\la'_N({\boldsymbol\rho})$,
 $\cD_t=\la'_{N_t}({\boldsymbol\rho}^t)$.
 For each $t\in\fw\setminus\check\fw$,
 let $S_t\subset\fw_t$ be a slice at $t$ in $\fw$, and let $U$ be an open, relatively compact, $K_t$-invariant neighbourhood of~$t$  in $t+S_t$;
 we also consider the smooth function $\Phi:\bR^{d} \to \bR^{d^t}$ described in Proposition~\ref{Qinv}.
 
Observe first of all that, for $q\in\cP(\fw)$, the operator $\la'_N(q)=q(i\inv \nabla_\fw)$ can be expressed as a Fourier multiplier operator in the $w$-variables: 
$$
\la'_N(q)F=\cF_\fw^{-1}(q\cF_\fw F),
$$
where $\cF_\fw$ denotes the partial Fourier transform of $F$ in  $w$,
\begin{equation}\label{w-Fourier}
\cF_\fw F(v,w')=\int_\fw F(v,w)e^{-i \lan w,w'\ran}\,dw.
\end{equation}

In this form, we can replace the polynomial $q$ by any slowly increasing smooth function $u\in\cO(\fw)$  and set
\begin{equation}\label{lambda'u}
\lambda'_N(u)F=\cF_\fw^{-1}(u\cF_\fw F)=F*(\del_0\otimes\cF\inv u)=(\del_0\otimes\cF\inv u)*F.
\end{equation}

We are using the fact that $\cO(\fw)$ can be identified as the space of pointwise multipliers of $\cS(\fw)$ into itself~\cite{T}.

The operators $\la'_N(u)$ are central, which allows a linear extension of $\la'_N$ to $\cP( \fv)\otimes \cO(\fw)$:
if $\psi=\sum_jp_j\otimes u_j$ with $p_j\in\cP( \fv)$, $u_j\in\cO(\fw)$ and $D_j=\la'_N(p_j)$, then
\begin{equation}\label{pou}
\begin{aligned}
\la'_N(\psi)F&=\sum_j\la'_N(p_j)\la'_N(u_j)F\\
&=\sum_jD_j\big(F*(\del_0\otimes\cF\inv u_j)\big)=\sum_j(D_jF)*(\del_0\otimes\cF\inv u_j)\big).
\end{aligned}
\end{equation}
 
 Notice that
 \begin{itemize}
 \item formula \eqref{pou} also makes sense for $F\in\cS'(N)$;
 \item $\lambda'_N(\psi)$ is $K$-invariant if and only if $\psi$ is $K$-invariant.
 \item if $\psi(v,w)=p(v,w)u(w)$ with $p\in\cP(\fn)$ and $u\in\cO(\fw)$, then $\la'_N(\psi)=\la'_N(p)\la'_N(u)$;
 \item  if $\psi$ is as above and $\pi_{\zeta,\omega}$ is the representation of $N$ defined in Lemma \ref{representations}, then, for $F\in\cS(N)$,
$$
 \pi_{\zeta,\omega}\big(\la'_N(\psi)F\big)=u(\zeta)\pi_{\zeta,\omega}(F)d\pi_{\zeta,\omega}\big(\la'_N(p)\big).
$$
 \end{itemize}
 
 We then set
 \begin{equation}\label{pi(pou)}
 d\pi_{\zeta,\omega}\big(\la'_N(\psi)\big)=u(\zeta)d\pi_{\zeta,\omega}\big(\la'_N(p)\big).
 \end{equation}
 
% The following lemma describes the algebra of $K$-invariant elements of $\cP( \fv)\otimes \cO(\fw)$.
% 
% \begin{lemma}\label{K-invariant O}
% The following identity holds:
% $$
% \big(\cP( \fv)\otimes \cO(\fw)\big)^K=\cP(\fn)^K\cO(w)^K.
% $$
% \end{lemma}
% 
% \begin{proof} 
% Take a finite sum $F(v,w)=\sum_kp_k(v)u_k(w)$ with $p_k\in\cP(\fv)$, $u_k\in\cO(\fw)$. Let $r$ be the maximum degree of the $p_k$ and, for any $n$, denote by $F_n(v,w)$ the Taylor polynomial of $F$ of order $n$ in the development at the origin. 
%Then $F_n$ is  $K$-invariant, hence a polynomial in the elements of the above chosen Hilbert basis ${\boldsymbol\rho}$. 
%
%Denote by ${\boldsymbol\rho}_1$ the subset of $\boldsymbol\rho$ of elements of positive degree in $v$ and let $\al_j$, $j=1,\dots q$ be all the multi-indices such that the degree in $v$ of ${\boldsymbol\rho}_1^{\al_j}$ is not greater than $r$.
%Then there exist polynomials $Q_{n,j}$ such that
%$$
%F_n(v,w)=\sum_j{\boldsymbol\rho}_1^{\al_j}(v,w)Q_{n,j}({\boldsymbol\rho}_{(0)})(w).
%$$
%
%Since the set of $\al_j$ is fixed, for each $j$ and $n$ large, the polynomials $Q_{n,j}$ are truncations of a formal power series $S_j=\sum_\beta c_{\beta,j} {\boldsymbol\rho}_{(0)}^\beta(w)$.
%
%By Borel's theorem, for each $j$ there exists $u'_j(w)\in C^\infty(\fw)^K$ with Taylor expansion $S_j$. Then
%$$
%F(v,w)-\sum_j{\boldsymbol\rho}_1^{\al_j}(v,w)u'_j(w)=\sum_kp_k(v)u_k(w)-\sum_j{\boldsymbol\rho}_1^{\al_j}(v,w)u'_j(w)
%$$
%vanishes of infinite order at the origin. 
%
% 
% \end{proof}

 \begin{lemma}\label{lambda-spherical}
 Let $\ph_{\zeta,\omega,\mu}$ be the spherical function \eqref{trace}. Then, given $\psi=\sum_jp_j(v,w)u_j(w)$ in $\big(\cP(\fv)\otimes\cO(\fw)\big)^K$, with $p_j\in\cP(\fn)^K$, $u_j\in\cO(\fw)^K$, we have
 $$
 \la'_N(\psi)\ph_{\zeta,\omega,\mu}=\Big(\sum_j\xi\big(\la'_N(p_j),\ph_{\zeta,\omega,\mu}\big)u_j(\zeta)\Big)\ph_{\zeta,\omega,\mu}.
 $$
 \end{lemma}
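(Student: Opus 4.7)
\medskip\noindent\textbf{Proof plan for Lemma \ref{lambda-spherical}.}

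By linearity of $\la'_N$, it suffices to prove the identity for a single term $\psi(v,w)=p(v,w)u(w)$, with $p\in\cP(\fn)^K$ and $u\in\cO(\fw)^K$. The third bullet point preceding the statement then gives the factorisation $\la'_N(\psi)=\la'_N(p)\la'_N(u)$. Since $p$ is $K$-invariant, $\la'_N(p)\in\bD(N)^K$ and $\la'_N(p)\ph_{\zeta,\omega,\mu}=\xi\bigl(\la'_N(p),\ph_{\zeta,\omega,\mu}\bigr)\ph_{\zeta,\omega,\mu}$ by the defining property of a spherical function. Thus the whole argument reduces to establishing
$$
\la'_N(u)\ph_{\zeta,\omega,\mu}=u(\zeta)\,\ph_{\zeta,\omega,\mu}. \qquad (\ast)
$$

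To prove $(\ast)$, I would first extract the central character from the representation. In exponential coordinates we have $(v,0)\cdot(0,w)=(v,w)$ because $w$ is central, and Lemma \ref{representations} gives $\pi_{\zeta,\omega}(0,w)=e^{i\lan\zeta,w\ran}I$. Consequently
$$
\pi_{\zeta,\omega}(v,w)=e^{i\lan\zeta,w\ran}\pi_{\zeta,\omega}(v,0).
$$
Plugging into \eqref{trace} and replacing $w$ by $kw$ inside the trace yields the representation
$$
\ph_{\zeta,\omega,\mu}(v,w)=\frac1{\dim V(\mu)}\int_K e^{i\lan k\inv\zeta,w\ran}\,\tr\bigl(\pi_{\zeta,\omega}(kv,0)_{|V(\mu)}\bigr)\,dk,
$$
i.e.\ $\ph_{\zeta,\omega,\mu}$ is a continuous superposition in $w$ of the exponentials $e^{i\lan k\inv\zeta,\,\cdot\,\ran}$, $k\in K$.

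By \eqref{lambda'u}, $\la'_N(u)$ is Fourier multiplication by $u$ in the $w$-variable, and for a pure exponential the standard distributional identity
$$
e^{i\lan\eta,\,\cdot\,\ran}*\cF\inv u=u(\eta)\,e^{i\lan\eta,\,\cdot\,\ran}
$$
gives $\la'_N(u)\bigl(f(v)\,e^{i\lan\eta,w\ran}\bigr)=u(\eta)\,f(v)\,e^{i\lan\eta,w\ran}$. Interchanging $\la'_N(u)$ with $\int_K dk$ — legitimate since $\la'_N(u)$ acts only on $w$ and the integrand depends smoothly on $k$ — each exponential $e^{i\lan k\inv\zeta,w\ran}$ contributes the factor $u(k\inv\zeta)$, and the $K$-invariance of $u$ turns this into the constant $u(\zeta)$, which pulls out of the integral. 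This proves $(\ast)$, and the lemma follows by combining $(\ast)$ with the eigenvalue relation for $\la'_N(p_j)$ and summing over $j$.

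The only delicate technical point is the extension of the Fourier-multiplier identity from Schwartz inputs to the bounded, non-integrable function $\ph_{\zeta,\omega,\mu}$; this is handled cleanly via the distributional convolution $\la'_N(u)F=F*(\del_0\otimes\cF\inv u)$, interpreting $\cF\inv u\in\cS'(\fw)$ paired against the exponentials in the slice integral. Everything else is a straightforward bookkeeping exercise in linearity and $K$-invariance.
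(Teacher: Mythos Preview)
Your proof is correct and follows essentially the same route as the paper's own argument: both reduce to showing $\la'_N(u)\ph_{\zeta,\omega,\mu}=u(\zeta)\ph_{\zeta,\omega,\mu}$, expand the spherical function via \eqref{trace} and the central character as a $K$-integral of exponentials $e^{i\lan k\inv\zeta,w\ran}$, and then use the $K$-invariance of $u$ to pull out the constant $u(\zeta)$. The only cosmetic difference is that the paper phrases the key step as ``$\cF_\fw\ph_{\zeta,\omega,\mu}$ is a measure supported on $\fv\times K\zeta$'' rather than interchanging the multiplier with $\int_K dk$, but this is the same computation.
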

 
 \begin{proof} We have
 $$
  \la'_N(\psi_{\zeta,\omega,\mu})\ph_{\zeta,\omega,\mu}=\sum_j\xi\big(\la'_N(p_j),\ph_{\zeta,\omega,\mu}\big)\la'_N(u_j)\ph_{\zeta,\omega,\mu}.
  $$

 By \eqref{trace} and the definition of $\pi_{\zeta,\omega}$ in Lemma \ref{representations},
 $$
 \begin{aligned}
 \ph_{\zeta,\omega,\mu}(v,w)&=\frac1{\dim V(\mu)}\int_K \tr\big(\pi_{\zeta,\omega}(kv,kw)_{|_{V(\mu)}}\big)\,dk\\
 &=\frac1{\dim V(\mu)}\int_K \tr\big(\pi_{\zeta,\omega}(kv,0)_{|_{V(\mu)}}\big)e^{i\lan k\inv\zeta,w\ran}\,dk.
 \end{aligned}
 $$

Hence $\cF_\fw\ph_{\zeta,\omega,\mu}$ is a measure\footnote{It can be formally written as
$$
\cF_\fw\ph_{\zeta,\omega,\mu}=\frac1{\dim V(\mu)}\int_K \tr\big(\pi_{\zeta,\omega}(kv,0)_{|_{V(\mu)}}\big)\del_{k\inv\zeta}(w)\,dk.
$$} supported on $\fv\times K\zeta$. 

Then, for each $j$, $u_j\cF_\fw\ph_{\zeta,\omega,\mu}=u_j(\zeta)\cF_\fw\ph_{\zeta,\omega,\mu}$ and therefore
$$
\la'_N(u_j)\ph_{\zeta,\omega,\mu}=u_j(\zeta)\ph_{\zeta,\omega,\mu}.\qquad\qedhere
$$
 \end{proof}
 
Hence we may consider $\la'_N(\Phi\circ {\boldsymbol\rho})$ component-wise
and the extension of the properties in Lemma \ref{lem_sym} yields:
\begin{enumerate}
\item For $k=0$, observing that $\Phi_{(0)}\circ {\boldsymbol\rho}_{(0)}\in\cO(\fw)$ by Corollary \ref{cor_rad}, we have
$$
\la'_N(\Phi_{(0)}\circ {\boldsymbol\rho})F= \la'_N(\Phi_{(0)}\circ {\boldsymbol\rho}_{(0)})F=\cF_\fw^{-1}\big((\Phi_{(0)}\circ {\boldsymbol\rho}_{(0)})\cF_\fw F\big).
$$
We can formally write
$$
\la'_N(\Phi_{(0)}\circ {\boldsymbol\rho})=(\Phi_{(0)}\circ {\boldsymbol\rho}_{(0)})(i\inv\nabla_{\fw})=\Phi_{(0)}(\cD_{(0)}).
$$
 \item For $k>0$,  observing that
 $\la'_N({\boldsymbol\rho}_{(1)}^{\al_1}\cdots {\boldsymbol\rho}_{(k-1)}^{\al_{(k-1)}})$ equals $\cD_{(1)}^{\beta_1}\cdots \cD_{(k-1)}^{\beta_{(k-1)}}$ plus $K$-invariant terms of lower order in $v$,
we have
\begin{equation}\label{lambda'(Phiorho)}
\begin{aligned} 
\lambda'_N(\Phi_{k,j}\circ {\boldsymbol\rho})&=\sum_{\ell=1}^{d_{k_0}}\Phi_{k,j,\ell}(\cD_{(0)})D_{k,\ell}+\sum_\al\Psi_{k,j,\al}(\cD_{(0)})\big)\la'_N({\boldsymbol\rho}_{(1)}^{\al_1}\cdots {\boldsymbol\rho}_{(k-1)}^{\al_{(k-1)}})\\
&=\sum_{\ell=1}^{d_{k_0}}\Phi_{k,j,\ell}(\cD_{(0)})D_{k,\ell}+\sum_\beta\Psi'_{k,j,\beta}(\cD_{(0)})\cD_{(1)}^{\beta_1}\cdots \cD_{(k-1)}^{\beta_{(k-1)}},
\end{aligned}
\end{equation}
where $\Psi'_{k,j,\beta}\in\cO(\bR^{d_{(0)}})$ and the last summation ranges over the multiindices $\beta$ such that the order in $v$ of $\cD_{(1)}^{\beta_1}\cdots \cD_{(k-1)}^{\beta_{(k-1)}}$ is not greater than $k$ (recall that the order is meant in the usual sense and not as a degree of homogeneity).
 \end{enumerate}

We will need the following observations concerning the interactions of the extended symmetrisations $\lambda'_N$, $\la'_{N_t}$ with $\cR^t$.
 
 For  $\psi\in \cP(\fv)\otimes \cO(\fw)$ we define $\cR^t \la'_N (\psi)$ via the same formula as in \eqref{opradon*1}:
$$
\cR^t \la'_N (\psi) = \la'_{N_t} (\psi_{|\fn_t}).
$$
It is not hard to verify that the last two identities in \eqref{composition-radon} remain true if $D$ (resp. $D_1,D_2$) is replaced by $\la'_N(\psi)$ (resp. $\la'_N(\psi_1),\la'_N(\psi_2)$)  with $\psi,\psi_1,\psi_2\in \cP(\fv)\otimes \cO(\fw)$.

Furthermore, if we apply $\cR^t \big(\la'_N (\psi)\big)$ to a spherical function $\ph^t_{\zeta,\omega,\mu}$
as in \eqref{trace} with $\zeta\in U$,
it follows from Lemma \ref{lambda-spherical} that
$$
\cR^t \big(\la'_N (\psi)\big) \ph^t_{\zeta,\omega,\mu}
= \la'_{N_t} (\psi_{|\fn_t})\ph^t_{\zeta,\omega,\mu}
= \la'_{N_t} (\chi\  \psi_{|\fn'})\ph^t_{\zeta,\omega,\mu}
=\cR^t\big( \la'_N (\chi^{\rm rad}\psi)\big) \ph^t_{\zeta,\omega,\mu}
$$
where $\chi\in C^\infty_c(\fn_t)$ is $K_t$-invariant and equal to 1 on  $\fv\times U$ and supported in $\fv\times (t+S_t)$.
Since $\chi \ \Phi \circ {\boldsymbol\rho} _{|\fn_t} = \chi \ {\boldsymbol\rho}^t $ by Proposition \ref{Qinv}
and $\la'_{N_t}({\boldsymbol\rho}^t)=\cD^t$, 
the observations above yield
$$
\cR^t \big(\la'_N (\Phi\circ {\boldsymbol\rho}_{|_{\fn_t}})\big) \ph^t_{\zeta,\omega,\mu}
=
\cD^t\ph^t_{\zeta,\omega,\mu},
\qquad \xi\in U.
$$
We can also extend \eqref{radon-eigenvalues} to obtain
that  the $\cD^t$-eigenvalues $\xi^t$
of $\ph^t=\ph^t_{\zeta,\omega,\mu}$
are equal to the eigenvalues of $\ph=\ph_{\zeta,\omega,\mu}$
for $\la'_N (\Phi\circ {\boldsymbol\rho})$.
Consequently, $\xi^t=(\xi^t_{(k)})_k\in \bR^{d_t}$ 
satisfies
\begin{equation}
\label{eq_xit2xi}
\begin{cases}
\xi^t_{(0)}=\Phi_{(0)}(\xi_{(0)})&\\
\xi^t_{k,j}=\sum_{\ell=1}^{d_{(k)}}\Phi_{k,j,\ell}(\xi_{(0)})\xi_{k,\ell}+\sum_\beta\Psi'_{k,j,\beta}(\xi_{(0)})\xi_{(1)}^{\beta_1}\cdots \xi_{(k-1)}^{\beta_{(k-1)}},&(k>0)
\end{cases} 
\end{equation}
 where  $\xi_{(k)}=(x_{k,1},\ldots,x_{k,d_{(k)}})$. The functions $\Phi_{k,j,\ell}$ and $\Psi'_{k,j,\beta}$ are slowly increasing and were given in Proposition \ref{Qinv} and in \eqref{lambda'(Phiorho)} respectively.

We have thus obtained the  expression for the local inverse of $\Lambda^t$ announced after Proposition \ref{bijection}:
\begin{lemma}
\label{lem_xit2xi}
We keep the notation above
and realise $\Lambda^t$ as a map from  $\Sigma^t_{\cD^t}$
to $\Sigma_\cD$, 
i.e.
$\Lambda^t(\xi^t)= \xi$
with $\xi^t=(\xi^t_{(k)})_{k\in \bN} \in \Sigma^t_{\cD^t}$
and $\xi=(\xi_{(k)})_{k\in \bN}\in \Sigma_\cD$;
the splittings are as in \eqref{eq_splitting_k} and \eqref{eq_splitting_kt}.
Let $A_{t,\cD}\subset \Sigma_\cD$, $A^t_{t,\cD}\subset\Sigma^t_{\cD^t}$ the sets introduced in Proposition \ref{bijection}, so that there exists $(\Lambda^t)\inv:A_{t,\cD}\longrightarrow A^t_{t,\cD}$.  
Then  the components of $\xi^t=(\Lambda^t)\inv(\xi)$ are given by \eqref{eq_xit2xi}.
\end{lemma}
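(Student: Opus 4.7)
The plan is to read off Lemma \ref{lem_xit2xi} directly from the three ingredients developed in the preceding subsection: the bijectivity of $\Lambda^t$ on $A_t$ from Proposition \ref{bijection}, the operator identity $\cR^t\bigl(\la'_N(\Phi\circ{\boldsymbol\rho}_{|\fn_t})\bigr)\ph^t=\cD^t\ph^t$ obtained via the extended symmetrization, and the explicit expansion \eqref{lambda'(Phiorho)} for $\la'_N(\Phi\circ{\boldsymbol\rho})$.

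First I would fix $\xi\in A_{t,\cD}$ and let $\ph=\ph_{\zeta,\omega,\mu}\in A_t$ be the bounded spherical function with $\xi(\ph)=\xi$ (with $\zeta\in K\cdot(t+S_t)$). By Proposition \ref{bijection}, there exists a unique $\ph^t\in A^t_t$ with $\Lambda^t\ph^t=\ph$, and writing $\xi^t=\xi^t(\ph^t)\in A^t_{t,\cD^t}$, the task becomes to express the components of $\xi^t$ in terms of those of $\xi$.

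Next I would extend the eigenvalue relation \eqref{radon-eigenvalues} from $\bD(N)^K$ to operators of the form $\la'_N(\psi)$ with $\psi\in (\cP(\fv)\otimes\cO(\fw))^K$. This extension is exactly the content of Lemma \ref{lambda-spherical}: the Fourier multiplier $u(i^{-1}\nabla_\fw)$ acts on $\ph_{\zeta,\omega,\mu}$ as multiplication by $u(\zeta)$, so slowly-increasing multipliers behave in the eigenvalue calculus precisely as polynomials do. Applied component-wise to $\psi=\Phi\circ{\boldsymbol\rho}$, together with the identity $\cR^t(\la'_N(\Phi\circ{\boldsymbol\rho}_{|\fn_t}))\ph^t=\cD^t\ph^t$ established just before the statement, this shows that each $\cD^t$-eigenvalue of $\ph^t$ equals the corresponding eigenvalue of $\ph$ under $\la'_N(\Phi\circ{\boldsymbol\rho})$.

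Finally I would substitute \eqref{lambda'(Phiorho)} into this identification. For $k=0$ the zeroth component reads $\la'_N(\Phi_{(0)}\circ{\boldsymbol\rho}_{(0)})=\Phi_{(0)}(\cD_{(0)})$, hence taking eigenvalues on $\ph$ gives $\xi^t_{(0)}=\Phi_{(0)}(\xi_{(0)})$. For $k>0$, since $\ph$ is a joint eigenfunction of $\cD_{(0)}$, of each $D_{k,\ell}$, and of each product $\cD_{(1)}^{\beta_1}\cdots\cD_{(k-1)}^{\beta_{k-1}}$ with eigenvalues $\xi_{(0)}$, $\xi_{k,\ell}$ and $\xi_{(1)}^{\beta_1}\cdots\xi_{(k-1)}^{\beta_{k-1}}$ respectively, reading off eigenvalues in \eqref{lambda'(Phiorho)} yields exactly the formulas in \eqref{eq_xit2xi}. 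The only subtle point is the extension of \eqref{radon-eigenvalues} to the extended symmetrization, which, as noted, is supplied by Lemma \ref{lambda-spherical}; with that in hand the remainder is a direct substitution.
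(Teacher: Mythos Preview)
Your proposal is correct and follows essentially the same approach as the paper: the lemma is stated as the summary of the development in Section~\ref{subsec_extlambda'}, and you have accurately identified and organised the three ingredients (Proposition~\ref{bijection} for bijectivity, the operator identity $\cR^t\bigl(\la'_N(\Phi\circ{\boldsymbol\rho}_{|\fn_t})\bigr)\ph^t_{\zeta,\omega,\mu}=\cD^t\ph^t_{\zeta,\omega,\mu}$ for $\zeta\in U$, and the expansion~\eqref{lambda'(Phiorho)}) and the way Lemma~\ref{lambda-spherical} extends~\eqref{radon-eigenvalues} to the extended symmetrisation. One small caveat: the operator identity is established in the paper only for $\zeta\in U$ (the relatively compact neighbourhood inside $t+S_t$), not on all of $t+S_t$, so strictly speaking the formula~\eqref{eq_xit2xi} is derived on the corresponding subset of $A_{t,\cD}$; this is consistent with the paper's own ``local inverse'' phrasing.
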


\section{Extending Gelfand transforms on $\cS_0(N)^K$}
\label{sec_towards}

The  analysis developed in Section \ref{sec_quotient_pairs}
allows us to give a first result towards property (S).
More precisely, we will be concerned with the space  $\cS_0(N)$ of Schwartz functions with vanishing moments of any order in the $\fw_0$-variables. In order to define this notion, we recall the decomposition $\fw=\fw_0+\check\fw$ in \eqref{z_0}, where $\fw_0$ denotes the orthogonal complement in $\fw$ to the subspace $\check\fw$ of $K$-fixed elements. We will write $w\in\fw$ as $w_0+u$ with $w_0\in\fw_0$, $u\in\check\fw$. 
We then say that a function $F$ has {\it vanishing moment of order $\beta\in\bN^{d_{\fw_0}}$  in the $\fw_0$-variables} if
$$
\int_{\fw_0}w_0^\beta F(v,w_0,u)\,dw_0=0\ ,
$$
for every $v\in\fv$, $u\in \check\fw$. 

In this section we prove the following statement.

\begin{proposition}\label{S_0}
Let $(N,K)$ be a nilpotent Gelfand pair.
We assume that all the quotient pairs $(N_t,K_t)$ for $t\in \fw_0\setminus\{0\}$ satisfies (S). Let $\cD$ be a generating system in $\bD(N)^K$.

If $F\in\cS_0(N)^K$, then its spherical transform can be extended from $\Sigma_\cD$ to a function $f\in\cS(\bR^d)$.

Furthermore, if we choose
$\cD=\la'({\boldsymbol \rho})$
where
 ${\boldsymbol \rho} = ({\boldsymbol \rho}_{\fw_0},{\boldsymbol \rho}_{\check\fw},
{\boldsymbol \rho}_{\fv},{\boldsymbol \rho}_{\fv,\fw_0})$
is   a Hilbert basis
 satisfying (i)-(iv) in Section \ref{subsec_hilbert_base},  
 then $f$ can be chosen vanishing with all its derivatives on $\{0\}\times\bR^{d_{\check\fw}}\times\bR^{d_\fv}\times\bR^{d_{\fv,\fw_0}}$. 
\end{proposition}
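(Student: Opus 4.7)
I would construct the Schwartz extension $f$ of $\cG F$ by patching local extensions obtained via the quotient pairs $(N_t,K_t)$, $t\in\fw_0\setminus\{0\}$, and exploit the vanishing-moment hypothesis to force $f$ to vanish on a neighbourhood of the subspace $Z:=\{0\}\times\bR^{d_{\check\fw}}\times\bR^{d_\fv}\times\bR^{d_{\fv,\fw_0}}$, which in particular implies the second conclusion.

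The first step is to check that $F\in\cS_0(N)^K$ forces $\cG F$ to vanish to infinite order on $Z\cap\Sigma_\cD$. By the vanishing-moments hypothesis, the partial Fourier transform $\cF_\fw F(v,\cdot)$ vanishes, together with all its $\zeta_{\fw_0}$-derivatives, on $\{\zeta_{\fw_0}=0\}$; combining \eqref{transform} with the trace formula \eqref{trace} and using that $K$ preserves the splitting $\fw=\fw_0\oplus\check\fw$, one sees that $\cG F(\ph_{\zeta,\omega,\mu})$ is an integral of $\cF_\fw F(\cdot,k\zeta)$ over $K$. Since ${\boldsymbol\rho}_{\fw_0}$ separates $K$-orbits in $\fw_0$, the equation $\xi_{\fw_0}(\ph)=0$ forces $\zeta_{\fw_0}=0$, and the claim follows.

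Next, for each $t\in\fw_0\setminus\{0\}$ I would fix a slice $S_t\subset\fw_t$ at $t$ together with a relatively compact $K_t$-invariant neighbourhood $U_t$ of $t$ in $t+S_t$, as in Proposition~\ref{Qinv}. By hypothesis $(N_t,K_t)$ satisfies~(S), so $\cG_t(\cR^t F)\in\cS(\Sigma^t_{\cD^t})$ admits a Schwartz extension $g^t\in\cS(\bR^{d_t})$. Combining \eqref{G-G_t}, Proposition~\ref{Qinv} and Lemma~\ref{lem_xit2xi}, the slowly-increasing smooth function
$$
h^t(\xi)\ :=\ g^t\bigl(\Phi(\xi)\bigr),
$$
defined on the open set $V_t\subset\bR^d$ on which the formulas \eqref{eq_xit2xi} apply (essentially, $\xi_{\fw_0}$ corresponding to $\zeta_{\fw_0}$ in the cone $\bR_+\cdot U_t$), agrees with $\cG F$ on $V_t\cap\Sigma_\cD$.

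The final step is the patching. Using the dilations \eqref{dilations}, the open sets $\bR_+\cdot U_t$ cover $\fw_0\setminus\{0\}$; extract a locally finite subcover $\{W_j=\bR_+\cdot U_{t_j}\}_j$ and pick a subordinate smooth $K$-invariant partition of unity $\{\chi_j\}$ on $\fw_0\setminus\{0\}$ with $\chi_j$ homogeneous of degree $0$ under \eqref{dilations} (identified with a smooth function on $\bR^{d_{\fw_0}}\setminus\{0\}$ via ${\boldsymbol\rho}_{\fw_0}$) and vanishing identically on a neighbourhood of $\xi_{\fw_0}=0$. Set
$$
f(\xi)\ :=\ \sum_j\chi_j(\xi_{\fw_0})\,h^{t_j}(\xi).
$$
Then $f$ is smooth on $\bR^d$, coincides with $\cG F$ on $\Sigma_\cD$, and vanishes, together with all its derivatives, on a neighbourhood of $Z$. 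The principal obstacle is to show $f\in\cS(\bR^d)$ with every Schwartz seminorm of $f$ controlled by a single Schwartz seminorm of $F$. I would obtain this by using \eqref{dilations} to conjugate estimates on each cone $W_j$ back to a fixed compact slice, thereby reducing the problem to finitely many essentially distinct quotient pairs; the continuity of the Schwartz-to-Schwartz extension for each $(N_{t_j},K_{t_j})$, provided by property~(S) and the open mapping theorem, combined with the polynomial-growth bounds on the components of $\Phi$ from Corollary~\ref{cor_rad} and \eqref{lambda'(Phiorho)}, then yields the required seminorm estimates.
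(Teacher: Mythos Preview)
Your overall architecture---localise in $\xi_{\fw_0}$ via slices, pull $\cG F$ back to the quotient spectra, extend there by property~(S), and push forward through $\Phi$---is exactly the paper's, but the patching step as written does not work, and the defect is precisely where the hypothesis $F\in\cS_0(N)^K$ must be used.

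First, a function $\chi_j$ on $\bR^{d_{\fw_0}}$ cannot be both homogeneous of degree~$0$ under the dilations \eqref{dilations} and vanish on a neighbourhood of $\xi_{\fw_0}=0$ unless it is identically zero; so the two properties you require of your partition are incompatible. More substantially, the map $\Phi$ of Proposition~\ref{Qinv} satisfies ${\boldsymbol\rho}^t=\Phi\circ{\boldsymbol\rho}$ only on the \emph{bounded} set $\fv\times U_t$, not on the full cone $\fv\times(\bR_+\cdot U_t)$; if you make $\Phi$ homogeneous so that the identity holds on the whole cone, its derivatives blow up as $\xi_{\fw_0}\to0$. Since $g^{t_j}$ is an arbitrary Schwartz extension of $\cG_{t_j}(\cR^{t_j}F)$ and $\cR^{t_j}F$ has no reason to lie in $\cS_0(N_{t_j})$, there is nothing forcing $g^{t_j}$ (or $h^{t_j}=g^{t_j}\circ\Phi$) to vanish near the origin. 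Hence your sum $f=\sum_j\chi_j h^{t_j}$ is not even smooth at $\xi_{\fw_0}=0$, let alone Schwartz. Your first step, the infinite-order vanishing of $\cG F$ on $Z\cap\Sigma_\cD$, is correct but lives only on the spectrum and is lost once you pass to the off-spectrum extensions $g^{t_j}$.

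The paper's remedy is to decompose $F$ itself rather than $\cG F$: with a finite set $T\subset\{|t|=1\}$ and a dyadic scale parameter $j\in\bZ$, one writes $F=\sum_{t\in T,\,j\in\bZ}F*_{\fw_0}\psi_{t,j}$ where $\psi_{t,j}$ has $\fw_0$-Fourier transform supported in $r^jKU_{0,t}$ (Lemma~\ref{xi-partition}). The vanishing-moment hypothesis then enters through Lemma~\ref{S_0-equivalence}\,(v), which gives $\|F*_{\fw_0}\psi_{t,j}\|_{\cS(N),M}=o(r^{-q|j|})$ for every $M,q$. Each piece is pushed to $(N_t,K_t)$, extended, and pulled back via the \emph{scaled} map $\Phi_{r^jt}=D^t(r^j)\circ\Phi_t\circ D(r^{-j})$ of \eqref{delta-scaling}; the rapid decay in $|j|$ is exactly what allows one to choose the extension orders $M_j$ so that the double series converges in every Schwartz norm, and each term is compactly supported in $\xi_{\fw_0}$ away from $0$, giving the vanishing on $Z$ for free. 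In short: the cone partition must be refined by a scale partition, and the vanishing moments must be fed into the norm estimates on the pieces of $F$, not merely into the pointwise behaviour of $\cG F$ on the spectrum.
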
 

In Section \ref{subsec_Pi}, 
we define the projection of the Gelfand spectrum onto ${\boldsymbol \rho}_{\fw}(\fw)$. 
In Section \ref{partitions}, we prepare a technical tool which yields a partition of unity on ${\boldsymbol \rho}_{\fw_0}(\fw_0)$.
In Section \ref{S_0-characterization}, we give equivalent descriptions of $\cS_0(N)^K$.
Eventually in Section \ref{section-Radon}, we prove Proposition \ref{S_0}.

\subsection{The projection $\Pi$}
\label{subsec_Pi}

If $q$ is a polynomial on $\fw$, then  $\la'(q)=q(i\inv\nabla_\fw)$
by Lemma \ref{lem_sym} Part \eqref{item_lem_sym_pdtz}
and one checks readily that for any bounded spherical function 
 $\ph=\ph_{\zeta,\omega,\mu}$
given via \eqref{trace}, we have
$$
\la'(q)\ph = q(i\inv\nabla_\fw) \ph = q(\zeta)\ph.
$$
Applying this to the polynomials in ${\boldsymbol\rho}_\fw$, we obtain
$$
\xi_\fw(\ph)={\boldsymbol\rho}_\fw(\zeta)\ ,
\quad \ph=\ph_{\zeta,\omega,\mu}\in \Sigma.
$$
This gives the following.

\begin{lemma}\label{Pi}
Let ${\boldsymbol\rho}$ be  a bi-homogeneous Hilbert basis ${\boldsymbol\rho}$ of a nilpotent Gelfand pair $(N,K)$
which splits as ${\boldsymbol\rho}=({\boldsymbol\rho}_{\fw_0},{\boldsymbol\rho}_{\check\fw},{\boldsymbol\rho}_\fv,{\boldsymbol\rho}_{\fv,\fw_0})$. 
The canonical projection $\Pi$ from $\bR^d$ onto $\bR^{d_\fw}$ restricts to a surjective map
$$
\Pi_{|_{\Sigma_\cD}}:\Sigma_\cD\longrightarrow {\boldsymbol\rho}_\fw(\fw)\ ,
$$
and, for $\xi_\fw\in{\boldsymbol\rho}_\fw(\fw)$, 
$$
\Pi_{|_{\Sigma_\cD}}\inv(\xi_\fw)=\big\{\xi(\ph_{\zeta,\omega,\mu}):{\boldsymbol\rho}_\fw(\zeta)=\xi_\fw\big\}\ .
$$

In particular, the map which assigns to a spherical function $\ph_{\zeta,\omega,\mu}\in\Sigma(N,K)$ the value ${\boldsymbol\rho}_\fw(\zeta)\in \bR^{d_\fw}$ is continuous.
\end{lemma}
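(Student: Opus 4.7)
The proof will be essentially a direct deduction from the computation preceding the lemma: for any $K$-invariant polynomial $q\in\cP(\fw)$, the operator $\lambda'(q)=q(i\inv\nabla_\fw)$ acts on the bounded spherical function $\ph_{\zeta,\omega,\mu}$ by multiplication by $q(\zeta)$. My plan is simply to repackage this into the three assertions of the statement.

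First I would verify, as an explicit intermediate step, the identity $\xi(\lambda'(q),\ph_{\zeta,\omega,\mu})=q(\zeta)$ for any $q\in\cP(\fw)^K$. Passing to the partial Fourier transform $\cF_\fw$, Lemma \ref{lem_sym}(iv) gives $\lambda'(q)=q(i\inv\nabla_\fw)$ so the eigenvalue is obtained by evaluating $q$ on $\supp\cF_\fw\ph_{\zeta,\omega,\mu}$. The expression \eqref{trace} shows that $\cF_\fw\ph_{\zeta,\omega,\mu}$ is supported in $\fv\times K\zeta$ (this is already used in the proof of Lemma \ref{lambda-spherical}), and the $K$-invariance of $q$ forces $q\equiv q(\zeta)$ on $K\zeta$, so the asserted eigenvalue follows. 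Applying this componentwise to ${\boldsymbol\rho}_\fw$ yields $\xi_\fw(\ph_{\zeta,\omega,\mu})={\boldsymbol\rho}_\fw(\zeta)$, and therefore
\[
\Pi\bigl(\xi(\ph_{\zeta,\omega,\mu})\bigr)={\boldsymbol\rho}_\fw(\zeta)\in{\boldsymbol\rho}_\fw(\fw).
\]

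For surjectivity onto ${\boldsymbol\rho}_\fw(\fw)$, I would note that for each $\zeta\in\fw$ the representation $\pi_\zeta$ of Lemma \ref{representations} exists, and Proposition \ref{carcano} (applied with $\omega=0$) guarantees at least one $K_{\zeta,0}$-irreducible component $V(\mu)\subset\cH_\zeta$; the corresponding $\ph_{\zeta,0,\mu}$ is bounded (by the general result of \cite{BJR90} recalled in Section \ref{subsec_spherical}), so it lies in $\Sigma$ and projects under $\Pi$ to ${\boldsymbol\rho}_\fw(\zeta)$. This exhausts ${\boldsymbol\rho}_\fw(\fw)$. The description of the fibers $\Pi_{|\Sigma_\cD}\inv(\xi_\fw)$ is then a tautology: since the full triple $(\zeta,\omega,\mu)$ parametrises $\Sigma$ via \eqref{Sigma-triples}, the condition $\Pi(\xi(\ph_{\zeta,\omega,\mu}))=\xi_\fw$ is equivalent, by the identity just proved, to ${\boldsymbol\rho}_\fw(\zeta)=\xi_\fw$.

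Finally, continuity of $\ph_{\zeta,\omega,\mu}\mapsto{\boldsymbol\rho}_\fw(\zeta)$ reduces, by the identity $\xi_\fw(\ph)={\boldsymbol\rho}_\fw(\zeta)$, to continuity of $\ph\mapsto\xi_\fw(\ph)$ on $\Sigma$. This in turn is a special case of the standard fact that the eigenvalue map $\ph\mapsto\xi(D,\ph)$ is continuous on $\Sigma$ with the compact-open topology for every $D\in\bD(N)^K$ (indeed, $\xi(D,\ph)=(D\ph)(0)$ when $D$ is applied in the $x$-variable and evaluated at the identity, and $\ph\mapsto(D\ph)(0)$ is continuous because spherical functions are real-analytic and depend continuously on their parameters; cf.\ the construction of the embedding $\Sigma\hookrightarrow\Sigma_\cD$ in \cite{FeRu} recalled before \eqref{Sigma_D}). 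I do not anticipate a genuine obstacle; the only point requiring a hint of care is ensuring that every $\zeta\in\fw$ actually produces a bounded spherical function so that $\Pi_{|\Sigma_\cD}$ hits all of ${\boldsymbol\rho}_\fw(\fw)$, which is handled by the Carcano--Vinberg criterion recalled in Proposition \ref{carcano}.
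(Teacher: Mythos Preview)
Your proposal is correct and follows essentially the same approach as the paper: the key computation is $\xi_\fw(\ph_{\zeta,\omega,\mu})={\boldsymbol\rho}_\fw(\zeta)$, which the paper states immediately before the lemma and from which the three assertions are read off. You simply spell out in more detail the surjectivity, the fiber description, and the continuity that the paper leaves implicit in the phrase ``this gives the following.''
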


As ${\boldsymbol\rho}_\fw$ is a Hilbert basis on $\fw$ for the action of $K$, 
${\boldsymbol\rho}_\fw(\fw)$ is homeomorphic to the orbit space $\fw/K$.
Since 
${\boldsymbol\rho}_\fw=({\boldsymbol\rho}_{\fw_0},{\boldsymbol\rho}_{\check\fw})$
and  ${\boldsymbol\rho}_{\check\fw}$ consists of a set of coordinate functions on $\check\fw$, 
we have:
$$
{\boldsymbol\rho}_\fw(\fw)
=
{\boldsymbol\rho}_{\fw_0}(\fw_0)
\times 
\bR^{d_{\check \fw}}, 
$$
and 
${\boldsymbol\rho}_{\fw_0}(\fw_0)$ is homeomorphic to the orbit space $\fw_0/K$.

As the polynomials in ${\boldsymbol\rho}_{\fw_0}$ are homogeneous, 
$0\in  {\boldsymbol\rho}_{\fw_0}(\fw_0)$.
The pre-image 
\begin{equation}
\label{eq_def_almost_most_singular}
\Pi_{|_{\Sigma_\cD}}^{-1}  
\left(   \{0\}\times \bR^{d_{\check \fw}} \right),
\end{equation}
in $\Sigma_\cD$ of $\{0\}\times \bR^{d_{\check\fw}}\subset 
 {\boldsymbol\rho}_{\fw_0}(\fw_0)\times \bR^{d_{\check\fw}}$ under $\Pi$ 
will play a special r\^ole in the proof of Proposition~\ref{S_0}.

\subsection{Partitions of unity}\label{partitions}

The constructions in Section \ref{subsec_rad_quotient}
 present a natural homogeneity with respect to the dilations on $\fw_0$, as well as translation-invariance  with respect to $\check\fw$. Precisely if, for $t_0\in \fw_0\setminus\{0\}$, the conclusions of Proposition \ref{Qinv} are satisfied on a $\fw_{t_0}$-neighbourhood $U\subset t_0+S_{t_0}$ of $t_0$, then they are also satisfied
\begin{enumerate}
\item[(i)]  on the neighbourhood $\del U$ of $\del t_0$, for $\del>0$;
\item [(ii)] on the neighbourhood $ U+u$ of $t_0+u$, for $u\in\check\fw$.
\end{enumerate}

Also notice that, since $K$ acts trivially on $\check\fw$, we may assume that the $\check\fw$-variables do not appear in components of ${\boldsymbol\rho}$ other than ${\boldsymbol\rho}_{\check\fw}$. Since $\check\fw\subset\fw_t$ for every $t\in\fw$, we may also assume  the same  on the components of ${\boldsymbol\rho}^t$ for every $t\in\fw\setminus\check\fw$.

We denote by $\cQ_t :\bR^{d_t}\to \bR^d$
the map given via \eqref{eq_xi2xit}, 
that is, 
$$
\cQ_{t,0,j}(\xi^t)=
Q_{0,j}(\xi_{(0)}^t) 
\quad\mbox{and}\quad
\cQ_{t,k,j}(\xi^t)  = 
\sum_{\ell=1}^{d_{(k)}^t}
Q_{k,j,\ell}(\xi_{(0)}^t) \xi_{k,\ell} ^t
\ +\ 
R'_{k,j} (\xi_{(0)} ^t,\ldots, \xi_{(k)}^t).
$$
We denote by $\Phi_t :\bR^{d}\to \bR^{d_t}$
the map in \eqref{eq_xit2xi}, 
i.e., 
\begin{equation}
\label{eq_def_Phit}
\Phi_{t,0}(\xi)=\Phi_{(0)}(\xi_{(0)})
\quad\mbox{and}\quad
\Phi_{t,k,j}(\xi)=\sum_{\ell=1}^{d_{k_0}}\Phi_{k,j,\ell}(\xi_{(0)})\xi_{k,\ell}+\Psi_{k,j}'(\xi_{(0)},\ldots, \xi_{(k-1)}).
\end{equation}
We denote by  $D(\del)$ be the dilations \eqref{dilations} on $\bR^d$ with exponents $\nu_j$ equal to the degrees of homogeneity of the elements of ${\boldsymbol\rho}$.
Similarly, $D^t(\del)$ denotes the dilations on $\bR^{d_t}$ with exponents $\nu^t_j$, equal to the degrees of homogeneity of the elements of ${\boldsymbol\rho}^t$.

All this has the following implications on the maps 
constructed in Section \ref{subsec_rad_quotient}.

\begin{enumerate}
\item[(i)] The maps $\cQ_t$, $\Phi_t$ contain the identity function in the $\xi_{\check\fw}$-component, and all the other components do not involve the $\xi_{\check\fw}$-variables.
\item[(ii)] In Section \ref{subsec_rad_quotient}, the slice $S_t$ at $t=t_0+u$, $t_0\ne0$, can be chosen by first taking a slice $S_{0,t_0}$ at $t_0$ in $\fw_0$ and then set $S_t=S_{0,t_0}+\check\fw$. In the same way the relatively compact neighbourhood $U_t\subset t+S_t$ can be taken of the form $U_{0,t_0}+\check\fw$. Notice that  $0\not\in U_{0,t_0}$.
\item[(iii)]  
Then, for $\del>0$, $\cQ_{\del t}=\cQ_t$ and
$D(\del)\circ \cQ_t=\cQ_t\circ D^t(\del)$.
\item[(iv)] Once $\Phi_t$ has been chosen for $t\in\fw$ with $|t|=1$, $\Phi_{\del t}$ can be chosen, for $\del>0$,  as
\begin{equation}\label{delta-scaling}
\Phi_{\del t}=D^t(\del)\circ \Phi_t\circ D(\del\inv) .
\end{equation}
\end{enumerate}

Let  $T$ be a finite set of points on the unit sphere in $\fw_0$ such that $\{KU_{0,t}\}_{t\in T}$ covers the unit sphere. Then there is $r>1$  such that the annulus $\{w_0\in\fw_0:1\le|w_0|\le r\}$ is contained in $ \bigcup_{t\in T}KU_{0,t}$. Therefore $\{r^jKU_{0,t}\}_{t\in T,\,j\in\bZ}$ is a locally finite covering of $\fw_0\setminus\{0\}$.

For each $t\in T$ we choose $\chi_t\ge0$ in $C^\infty_c(\fw_t\cap\fw_0)$ supported on $U_{0,t}$ so that $\sum_t\chi_t>0$ on $\{w\in\fw_0:1\le|w|\le r\}$. By Corollary \ref{Slice-cor}, we can define $\chi^\#_{t,j}\in C^\infty_c(\fw_0)$ supported in $r^j KU_{0,t}$ and such that
$$
\chi^\#_{t,j}(w)=\chi_t^{\rm rad}(r^{-j}w)
$$ 
on 

Up to dividing each $\chi_{t,j}^\#$ by $\sum_{t\in T,\,j\in\bZ}\chi_{t,j}^\#$, we may assume that the
$\chi_{t,j}^\#$ form a partition of unity on $\fw_0\setminus\{0\}$  subordinated to the covering $\{r^jKU_{0,t}\}_{t\in T,j\in\bZ}$.

\begin{lemma}\label{xi-partition}
There exists a family  $\{\eta_{t,j}\}_{t\in T,\,j\in\bZ}$ of nonnegative functions on $\bR^{d_{\fw_0}}$ and a $D(\del)$-invariant neighbourhood $\Omega$ of ${\boldsymbol\rho}_{\fw_0}(\fw_0)\setminus\{0\}$ in $\bR^{d_{\fw_0}}$ such that
\begin{equation}\label{sum-eta}
\sum_{t\in T,\,j\in\bZ}\eta_{t,j}(\xi)=1 \ ,\qquad (\xi\in\Omega)\ , 
\end{equation}
and, for every $t,j$,
\begin{enumerate}
\item[\rm(i)]  $\eta_{t,j}\in C^\infty_c(\bR^{d_{\fw_0}}\setminus\{0\})$;
\item[\rm(ii)] $\eta_{t,j}(\xi)=\eta_{t,0}\big(D(r^{-j})\xi\big)$;
\item[\rm(iii)] $(\supp\eta_{t,j})\cap {\boldsymbol\rho}_{\fw_0}(\fw_0)\subset {\boldsymbol\rho}_{\fw_0}(r^jU_{0,t})$;
\item[\rm(iv)]  $\eta_{t,j}\big({\boldsymbol\rho}_{\fw_0}(w)\big)=\chi^\#_{t,j}(w)$ for all $w\in\fw_0$.
\end{enumerate}
\end{lemma}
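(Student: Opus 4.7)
The strategy is to build each $\eta_{t,j}$ as a dilated lift of the $K$-invariant cutoff $\chi_{t,j}^{\#}$ through the Hilbert map ${\boldsymbol\rho}_{\fw_0}$. Concretely, I would first apply G.~Schwarz's theorem (in the form used to prove Corollary~\ref{cor_rad}) to the $K$-invariant function $\chi_t^{\rm rad}=\chi_{t,0}^{\#}\in C^\infty_c(\fw_0)$: since ${\boldsymbol\rho}_{\fw_0}$ is a Hilbert basis on $\fw_0$, there exists $\eta_{t,0}\in C^\infty(\bR^{d_{\fw_0}})$ with $\eta_{t,0}\circ{\boldsymbol\rho}_{\fw_0}=\chi_{t,0}^{\#}$. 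Because $\chi_{t,0}^{\#}$ is supported in the relatively compact $K$-invariant set $r^0 K U_{0,t}\subset\fw_0\setminus\{0\}$ and ${\boldsymbol\rho}_{\fw_0}^{-1}(0)=\{0\}$ (a compact group action has only the zero orbit mapped to~$0$), I can multiply $\eta_{t,0}$ by a smooth cutoff in $\bR^{d_{\fw_0}}$ supported in a small Euclidean neighbourhood of the compact set ${\boldsymbol\rho}_{\fw_0}(\overline{K U_{0,t}})\subset\bR^{d_{\fw_0}}\setminus\{0\}$ and equal to $1$ on it, thereby arranging $\eta_{t,0}\in C^\infty_c(\bR^{d_{\fw_0}}\setminus\{0\})$ with $(\supp\eta_{t,0})\cap{\boldsymbol\rho}_{\fw_0}(\fw_0)\subseteq{\boldsymbol\rho}_{\fw_0}(U_{0,t})$. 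Then set $\eta_{t,j}(\xi):=\eta_{t,0}\bigl(D(r^{-j})\xi\bigr)$.

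With this definition, (i) and (ii) are immediate. For (iv), the identity ${\boldsymbol\rho}_{\fw_0}(r^{-j}w)=D(r^{-j}){\boldsymbol\rho}_{\fw_0}(w)$ (which holds because each component of ${\boldsymbol\rho}_{\fw_0}$ is homogeneous of the exponent used in $D$) yields
$$
\eta_{t,j}({\boldsymbol\rho}_{\fw_0}(w))=\eta_{t,0}({\boldsymbol\rho}_{\fw_0}(r^{-j}w))=\chi_{t,0}^{\#}(r^{-j}w)=\chi_{t,j}^{\#}(w).
$$
For (iii), if $\xi\in(\supp\eta_{t,j})\cap{\boldsymbol\rho}_{\fw_0}(\fw_0)$ then $D(r^{-j})\xi\in\supp\eta_{t,0}\cap{\boldsymbol\rho}_{\fw_0}(\fw_0)\subseteq{\boldsymbol\rho}_{\fw_0}(U_{0,t})$, so $\xi\in D(r^j){\boldsymbol\rho}_{\fw_0}(U_{0,t})={\boldsymbol\rho}_{\fw_0}(r^jU_{0,t})$.

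The remaining issue is the normalisation to make $\sum\eta_{t,j}\equiv 1$ on a $D(\delta)$-invariant neighbourhood. Write $\phi=\sum_{t,j}\eta_{t,j}$; by the locally finite annular overlap of the $\supp\eta_{t,j}$ (controlled by the covering $\{r^jKU_{0,t}\}$), $\phi$ is smooth on $\bR^{d_{\fw_0}}\setminus\{0\}$, is $D(r)$-invariant by construction, and satisfies $\phi\equiv 1$ on $V:={\boldsymbol\rho}_{\fw_0}(\fw_0)\setminus\{0\}$ because on $V$ we have $\phi({\boldsymbol\rho}_{\fw_0}(w))=\sum\chi_{t,j}^{\#}(w)=1$. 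By $D(r)$-periodicity it suffices to control $\phi(D(s)\xi)$ for $s\in[1,r]$; continuity of $\phi$ together with compactness of $[1,r]$ and of any chosen compact fundamental domain of $V$ under $D(r)$ produce an open neighbourhood $\Omega_0$ of $V$ on which $\phi(D(s)\xi)>1/2$ for all $s\in[1,r]$. Then $\Omega:=\bigcup_{\delta>0}D(\delta)\Omega_0$ is a $D(\delta)$-invariant open neighbourhood of $V$ on which $\phi>1/2$, and I replace $\eta_{t,j}$ by $\eta_{t,j}/\phi$ multiplied by a smooth $D$-invariant cutoff equal to $1$ on $\Omega$ and supported where $\phi>1/4$. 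This preserves (i)--(iii) and, since $\phi=1$ on $V$, preserves (iv) as well, while now delivering $\sum\eta_{t,j}\equiv 1$ on $\Omega$. The main technical nuisance is exactly this last step: ensuring that the normalisation by $\phi$ yields globally smooth compactly supported functions while simultaneously producing a genuinely $D(\delta)$-invariant (for all $\delta>0$, not merely all powers of $r$) neighbourhood $\Omega$ of $V$. The compactness argument in $s\in[1,r]$ combined with the $D(r)$-invariance of $\phi$ is what makes this work.
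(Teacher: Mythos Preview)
Your approach is essentially the paper's: lift $\chi_{t,0}^\#$ through ${\boldsymbol\rho}_{\fw_0}$ via G.~Schwarz, truncate to get compact support away from $0$, dilate, and normalise by the sum. One imprecision: to obtain (iii) you should build the Euclidean cutoff around the compact set ${\boldsymbol\rho}_{\fw_0}(\supp\chi_{t,0}^\#)$ rather than ${\boldsymbol\rho}_{\fw_0}(\overline{KU_{0,t}})$, using that ${\boldsymbol\rho}_{\fw_0}$ is proper so that ${\boldsymbol\rho}_{\fw_0}(\fw_0\setminus KU_{0,t})$ is closed and disjoint from ${\boldsymbol\rho}_{\fw_0}(\supp\chi_{t,0}^\#)$ --- a neighbourhood of the full closure $\overline{KU_{0,t}}$ will in general meet ${\boldsymbol\rho}_{\fw_0}(\fw_0)$ outside ${\boldsymbol\rho}_{\fw_0}(U_{0,t})$. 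For the normalisation, the paper simply sets $\eta_{t,j}=u_{t,j}/\sum u_{t',j'}$ and takes $\Omega=\{\sum u_{t',j'}>0\}$; your more explicit compactness argument in $s\in[1,r]$ to upgrade $D(r)$-invariance to full $D(\delta)$-invariance of $\Omega$ is a welcome clarification, though note your ``$D$-invariant cutoff'' need only be $D(r)$-invariant (e.g.\ $\psi\circ\phi$ for a suitable $\psi\in C^\infty(\bR)$), which already suffices for (ii).
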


\begin{proof}
By \cite{Schw}, there exist smooth functions $u_t$, $t\in T$, on $\bR^{d_{\fw_0}}$ such that 
$$
u_t\big(\boldsymbol\rho_{\fw_0}(w)\big)=\chi_t^{\rm rad}(w)=\chi^\#_{t,0}(w)\ .
$$

Setting $u_{t,j}=u_t\circ D(r^{-j})$, we have
$\chi_{t,j}^\#(w)=u_{t,j}\big({\boldsymbol\rho}_{\fw_0}(w)\big)$, 
for every $t$ and $j$.

Since ${\boldsymbol\rho}_{\fw_0}(\supp\chi^\#_{t,0})$ does not contain the origin, we may assume that each $u_t$, $t\in T$, is supported on a fixed compact set $E$ of $\bR^{d_{\fw_0}}$ not containing the origin. Moreover, since ${\boldsymbol\rho}_{\fw_0}$ is a proper map, and ${\boldsymbol\rho}_{\fw_0}(\fw_0\setminus KU_{0,t})$ is closed in $\bR^{d_{\fw_0}}$ and disjoint from ${\boldsymbol\rho}_{\fw_0}(\supp\chi^\#_{t,0})$, we may also assume  that $(\supp u_t)\cap{\boldsymbol\rho}_{\fw_0}(\fw_0)\subset {\boldsymbol\rho}_{\fw_0}(KU_{0,t})={\boldsymbol\rho}_{\fw_0}(U_{0,t})$.

Clearly,
$\sum_{t,j}u_{t,j}=1$ on ${\boldsymbol\rho}_{\fw_0}(\fw_0)\setminus\{0\}$. Therefore, if we set
$$
\eta_{t,j}=\frac{u_{t,j}}{\sum_{t',j'}u_{t',j'}}\ ,
$$
$\eta_{t,j}=u_{t,j}$ on ${\boldsymbol\rho}_{\fw_0}(\fw_0)$, and the sum of the $\eta_{t,j}$ remains equal to 1 where some $\eta_{t,j}$ is positive.  Then \eqref{sum-eta} and properties (i)-(iv) follow easily.
\end{proof}

\subsection{Characterisations of functions in $\cS_0(N)^K$}\label{S_0-characterization}
If  $g$ is an  integrable function  on $\fw_0$ and $F$ and integrable function on $N$, we set
\begin{equation}\label{*w_0}
F*_{\fw_0}g(v,w_0,u)=\int_{\fw_0} F(v,w_0-w'_0,u)g(w'_0)\,dw'_0\ .
\end{equation}

This  can be regarded as the convolution  on $N$ of $F$ and the finite measure $\del_0\otimes g$, where $\del_0$ is the Dirac delta at the origin in $\fv\oplus\check\fw$.
We use the symbol $\widehat{\phantom a}$ to denote Fourier transform in the $\fw_0$-variables.  For a  function on $N$ we then set
\begin{equation}\label{fourier}
\widehat F(v,\zeta,u)=\int_{\fw_0} F(v,w_0,u)e^{-i\lan w_0,\zeta\ran}\,dw_0\ ,
\end{equation}
for $v\in\fv$, $\zeta\in\fw_0$, $u\in\check\fw$. For $F$ and $g$  as in \eqref{*w_0}, 
$\widehat{F*_{\fw_0}g}(v,\zeta,u)=\widehat F(v,\zeta,u)\widehat g(\zeta)$.

Finally, we denote by $\psi_{t,j}$ the inverse Fourier transform of $\chi^\#_{t,j}$.

\begin{lemma}\label{S_0-equivalence}
The following are equivalent for a function $F\in\cS(N)$:
\begin{enumerate}
\item[\rm(i)] $F\in\cS_0(N)$;
\item[\rm(ii)] $\widehat F(v,\zeta,u)$ vanishes with all its derivatives for $\zeta=0$;
\item [\rm(iii)] for every $k\in\bN$, $F(v,w)=\sum_{|\al|=k}\de_w^\al G_\al(v,w)$, with $G_\al\in\cS(N)$ for every $\al$;
\item[\rm(iv)] the series $\sum_{t\in T,\,j\in\bZ}F*_{\fw_0}\psi_{t,j}$ converges to $F$ in every Schwartz norm;
\item[\rm(v)] for every Schwartz norm $\|\ \|_{\cS(N),M}$ and every $q\in\bN$, $\|F*_{\fw_0}\psi_{t,j}\|_{\cS(N),M}=o(r^{-q|j|})$ as $j\to\pm\infty$.
\end{enumerate}
\end{lemma}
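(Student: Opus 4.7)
The plan is to prove the cycle $(\text{i})\Leftrightarrow(\text{ii})\Leftrightarrow(\text{iii})$ and $(\text{ii})\Rightarrow(\text{v})\Rightarrow(\text{iv})\Rightarrow(\text{ii})$, with the partial Fourier transform $\cF_{\fw_0}$ acting as the central bridge: it is an isomorphism of $\cS(N)$ onto itself, converts $*_{\fw_0}$ into pointwise multiplication, and sends $\de^\al_{w_0}$ to multiplication by $(i\zeta)^\al$.

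The equivalence $(\text{i})\Leftrightarrow(\text{ii})$ I would deduce from the standard Fourier identity
$$\de^\beta_\zeta\widehat F(v,0,u)=(-i)^{|\beta|}\int_{\fw_0}w_0^\beta F(v,w_0,u)\,dw_0\ ,$$
which is valid for all $F\in \cS(N)$. For $(\text{ii})\Rightarrow(\text{iii})$, I would fix $k\in\bN$ and decompose $\widehat F=\chi\widehat F+(1-\chi)\widehat F$ with $\chi\in C^\infty_c(\fw_0)$ equal to $1$ near $0$: on $\supp\chi$, a Taylor expansion at $\zeta=0$ with integral remainder of order $k$ gives $\chi\widehat F=\sum_{|\al|=k}\zeta^\al h^{(1)}_\al$ with $h^{(1)}_\al$ smooth and compactly supported; on $\{1-\chi\ne0\}$, the elementary identity $1=\sum_{|\al|=k}\binom k\al(\zeta^\al\bar\zeta^\al)/|\zeta|^{2k}$ (valid for $\zeta\ne0$) yields $(1-\chi)\widehat F=\sum_{|\al|=k}\zeta^\al h^{(2)}_\al$, with each $h^{(2)}_\al\in\cS$ since $(1-\chi)$ kills the singularity at the origin. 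Setting $G_\al=(-i)^{|\al|}\cF^{-1}_{\fw_0}(h^{(1)}_\al+h^{(2)}_\al)$ yields (iii). The converse $(\text{iii})\Rightarrow(\text{ii})$ is immediate: the Fourier transform of $\de^\al_w G_\al$ is $(i\zeta)^\al\widehat{G_\al}$, which together with all its derivatives of order $<|\al|$ vanishes at $\zeta=0$; taking $k$ arbitrarily large gives (ii).

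For $(\text{ii})\Rightarrow(\text{v})$, I would use the identity $\cF_{\fw_0}(F*_{\fw_0}\psi_{t,j})=\widehat F\cdot \chi^\#_{t,j}$ and the fact, drawn from Lemma \ref{xi-partition} via $\chi^\#_{t,j}(\zeta)=\chi^{\rm rad}_t(r^{-j}\zeta)$, that this product is supported in the dyadic annulus $|\zeta|\asymp r^j$. After inverse Fourier transform and the change of variables $\zeta=r^j\eta$, any Schwartz seminorm $\|F*_{\fw_0}\psi_{t,j}\|_{\cS(N),M}$ is controlled by a fixed polynomial in $r^{|j|}$ (arising from the Jacobian and from $\de^\beta_{w_0}$-derivatives producing factors $(ir^j\eta)^\beta$) times a supremum of finitely many derivatives of $\widehat F(v,r^j\eta,u)$ over the fixed compact set $\eta\in\supp\chi^{\rm rad}_t$, which is bounded away from $0$. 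For $j\to+\infty$, Schwartz decay of $\widehat F$ yields arbitrary inverse-polynomial decay in $r^j$; for $j\to-\infty$, Taylor's theorem at $\zeta=0$ combined with hypothesis (ii) yields arbitrary polynomial decay in $r^{-|j|}$. Combining both regimes gives (v).

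The implication $(\text{v})\Rightarrow(\text{iv})$ is then essentially immediate: rapid decay of the seminorms produces absolute convergence of the series in every Schwartz seminorm to some $\tilde F\in\cS(N)^K$, and for $\zeta\ne 0$ the sum $\sum_{t,j}\chi^\#_{t,j}(\zeta)\widehat F(\zeta)$ is locally finite and equal to $\widehat F(\zeta)$; continuity then forces $\cF_{\fw_0}\tilde F=\widehat F$ throughout $\fw_0$, hence $\tilde F=F$. Finally, for $(\text{iv})\Rightarrow(\text{ii})$ I would note that each finite partial sum $\sum_{(t,j)\in J}F*_{\fw_0}\psi_{t,j}$ has Fourier transform $\bigl(\sum_{(t,j)\in J}\chi^\#_{t,j}\bigr)\widehat F$ supported away from $\zeta=0$, so all its $\zeta$-derivatives at $\zeta=0$ vanish; Schwartz convergence preserves pointwise values of all derivatives, so all derivatives of $\widehat F$ vanish at $\zeta=0$. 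The main technical obstacle I anticipate is the quantitative estimate $(\text{ii})\Rightarrow(\text{v})$: tracking uniform constants in $(t,j)$ and properly balancing the two regimes $j\to\pm\infty$ requires some care, whereas the other implications reduce to formal Fourier-theoretic manipulations once this estimate is in place.
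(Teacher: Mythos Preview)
Your proposal is correct and follows essentially the same approach as the paper's proof, which is just a terse sketch: the paper states that (i)$\Leftrightarrow$(ii) is direct from the definition, (ii)$\Leftrightarrow$(iii) follows from Hadamard's lemma, and the equivalences among (ii), (iv), (v) are seen on the $\fw_0$-Fourier transform side. Your argument simply fleshes out these same steps---the cutoff-plus-Taylor decomposition you use for (ii)$\Rightarrow$(iii) is a concrete realisation of the Schwartz version of Hadamard's lemma, and your dyadic analysis for (ii)$\Rightarrow$(v) and the partition-of-unity reasoning for (iv)$\Rightarrow$(ii) are exactly what the paper has in mind by ``easily seen on the Fourier transform side''.
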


\begin{proof}
The equivalence of (i) and (ii) is a direct consequence of the definition of $\cS_0(N)$. The equivalence of (ii) and (iii) follows from Hadamard's lemma, cf. \cite{FRY2}. Finally, the equivalence among (ii), (iv) and (v) can be easily seen on the $\fw_0$-Fourier transform side.
\end{proof}

\subsection{Proof of Proposition \ref{S_0}}
\label{section-Radon}

Let $(N,K)$ be a nilpotent Gelfand pair.
We assume that all the quotient pairs $(N_t,K_t)$ for $t\in \fw_0\setminus\{0\}$ satisfies (S).
We continue with the notation of 
Sections \ref{partitions} and \ref{subsec_rad_quotient}.

Let  $F\in\cS_0(N)^K$.
Decompose $F$ according to Lemma \ref{S_0-equivalence} (iv). Then
$$
\cG F=\sum_{t\in T,\,j\in\bZ}\cG(F*_{\fw_0}\psi_{t,j})\ .
$$

Denoting by $\mu_{t,j}$ the measure $\del_0\otimes\psi_{t,j}$, where $\del_0$ is the Dirac delta at the origin in $\fv\oplus\check\fw$, $\cG(F*_{\fw_0}\psi_{t,j})$ is the product of $\cG F$ and $\cG\mu_{t,j}$. By \eqref{trace}, if $\zeta=\zeta_0+\check\zeta\in\fw_0\oplus\check\fw$, then 
$$
\begin{aligned}
\cG\mu_{t,j}(\ph_{\zeta,\omega,\mu})&=\int_{\fw_0}\psi_{t,j}(w_0)\ph_{\zeta\omega,\mu}(0,-w_0,0)\,dw_0\\
&=\int_{\fw_0}\int_K\psi_{t,j}(w_0)e^{-i\lan\zeta_0,kw_0\ran}\,dk\,dw_0\\
&=\chi_{t,j}^\#(\zeta_0)\ .
\end{aligned}
$$

Then, for $\xi=(\xi_{\fw_0},\xi_{\check\fw},\xi_\fv,\xi_{\fv,\fw_0})\in\Sigma_\cD$,
\begin{equation}\label{G(f*psi)}
\cG(F*_{\fw_0}\psi_{t,j})(\xi)=\cG F(\xi)\eta_{t,j}(\xi_{\fw_0})\ .
\end{equation}

Set $F_{t,j}=F*_{\fw_0}\psi_{t,j}$, and consider $\cR^tF_{t,j}$. Since we are assuming that property (S) holds for $(N_t,K_t)$, we have $\cG_t(\cR^tF_{t,j})\in \cS(\Sigma^t_{\cD^t})$. Moreover, for every $M,q\in\bN$, 
$$
\|\cG_t(\cR^tF_{t,j})\|_{M,\cS(\Sigma^t_{\cD^t})}=o(r^{-q|j|})\ ,
$$
by Part (v) of Lemma \ref{S_0-equivalence}
and the continuity of $\cG_t$ and $\cR^t$.

For fixed $M$, there exist functions $h^{(M)}_{t,j}\in\cS(\bR^d)$ such that 
$$
{h^{(M)}_{t,j}}_{\Sigma^t_{\cD^t}}=\cG_t(\cR^tF_{t,j})\ ,\qquad \|h^{(M)}_{t,j}\|_{M,\cS(\bR^d)}\le 2\|\cG_t(\cR^tF_{t,j})\|_{M,\cS(\Sigma^t_{\cD^t})}\ .
$$
Recall that the maps $\Phi_t$ and $\Phi_{r^jt}$
are defined via \eqref{eq_def_Phit} and \eqref{delta-scaling}.
Since $\cG F_{t,j}$ is supported in $\Sigma_\cD\cap \Pi\inv\big({\boldsymbol\rho}_{\fw_0}(r^jU_{0,t})\big)$,  \eqref{G-G_t} and Lemma \ref{lem_xi2xit} imply that the composition $g^{(M)}_{t,j}=h^{(M)}_{t,j}\circ\Phi_{r^jt}$ coincides with $\cG F_{t,j}$ on $\Sigma_{\cD}$.

Therefore, for every choice of the integers $M_j$, we can say that the series
$$
\sum_{t,t'\in T,\,j,j'\in\bZ}\eta_{t',j'}(\xi_{\fw_0})g^{(M_j)}_{t,j}(\xi)
$$
converges pointwise to $\cG F$ on $\Sigma_\cD$. In fact, many of the terms will vanish identically on $\Sigma_\cD$, and this surely occurs when ${\boldsymbol\rho}_{\fw_0}(r^{j'}U_{0,t'})\cap {\boldsymbol\rho}_{\fw_0}(r^jU_{0,t})=\emptyset$.  Therefore,  if $E_{t,j}=\{(t',j'):{\boldsymbol\rho}_{\fw_0}(r^{j'}U_{0,t'})\cap {\boldsymbol\rho}_{\fw_0}(r^jU_{0,t})\ne\emptyset\}$,
\begin{equation}\label{sum}
\sum_{t\in T,\,j\in\bZ}\sum_{(t',j')\in E_{t,j}}\eta_{t',j'}(\xi_{\fw_0})g^{(M_j)}_{t,j}(\xi)=\cG F(\xi)
\end{equation}
on $\Sigma_\cD$.
Notice that, by construction,  there is $A>0$ such that $|j-j'|\le A$ for $(t',j')\in E_{t,j}$. In particular, the sets $E_{t,j}$ are finite and their cardinalities have a uniform upper bound. 

We claim that, choosing the $M_j$ appropriately, we can make the series \eqref{sum} converge to the required Schwartz extension of $\cG F$.

 In order to estimate the Schwartz norms of $\eta_{t',j'}g^{(M)}_{t,j}$, for $(t',j')\in E_{t,j}$, observe that, by Proposition \ref{Qinv}, the $\fw_0$-variables are bounded to a compact set, and the other variables appear in $\Phi_t$ as polynomial factors. Taking this into account, together with the scaling properties of $\Phi_{r^jt}$ and $\eta_{t',j'}$, cf.  \eqref{delta-scaling} and Lemma \ref{xi-partition} (ii), it is not hard to see that there is $p_M$ depending only on $M$ such that, for $(t',j')\in E_{t,j}$,
$$
\|\eta_{t',j'}g^{(p_M)}_{t,j}\|_{M,\cS(\bR^d)}\le Cr^{|j|p_M}\|h^{(p_M)}_{t,j}\|_{p_M,\cS(\bR^d)}\ .
$$

Hence,
$$
\|\eta_{t',j'}g^{(p_M)}_{t,j}\|_{M,\cS(\bR^d)}=o(r^{-q|j|})\ ,
$$
for every $q$ and $(t',j')\in E_{t,j}$.

By induction, we can then select a strictly increasing sequence $\{j_q\}_{q\in\bN}$ of integers, such that $j_0=0$ and
$$
\|\eta_{t',j'}g^{(p_q)}_{t,j}\|_{q,\cS(\bR^d)}\le r^{-q|j|}\ ,
$$
for $j\ge j_q$ and $(t',j')\in E_{t,j}$. For $j_q\le j< j_{q+1}$, we select $M_j=p_q$ and consider the following special case of~\eqref{sum}:
$$
g(\xi)=\sum_{q=0}^\infty\,\sum_{\substack{j_q\le j<j_{q+1}\\ t\in T}}\,\sum_{(t',j')\in E_{t,j}}\eta_{t',j'}(\xi_{\fw_0})g^{(p_q)}_{t,j}(\xi)\ .
$$

Then the series converges in every Schwartz norm. 
This defines $g\in \cS(\bR^d)$.
We check easily that $g$ coincides with $\cG F$ on $\Sigma_\cD$.
This shows the first part of Proposition \ref{S_0}, 
as the corresponding property does not depend on a choice of $\cD$,
see Lemma \ref{lem_poly_map_SigmacDs}.

Since each term vanishes identically on a neighbourhood of $\{0\}\times\bR^{d_{\check\fw}}\times\bR^{d_\fv}\times\bR^{d_{\fv,\fw_0}}$, the sum must have all derivatives vanishing on this set.
This concludes the proof of Proposition~\ref{S_0}.

\section{The pair $(\check N,K)$ and hypothesis (H)}
\label{sec_checkN+H}

Section \ref{sec_towards} shows the Schwartz extension property 
we want but only
for functions in $\cS_0(N)^K$
and 
provided that all the proper quotient pairs 
satisfy (S).
We now show that under certain conditions, 
we can subtract from a general function $F\in\cS(N)^K$ a function $G$ with Schwartz spherical transform  so 
that $F-G\in \cS_0(N)^K$.

\begin{proposition}\label{F-G}
Let $(N,K)$ be a nilpotent Gelfand pair satisfying hypothesis (H).
Given $F\in\cS(N)^K$, there is $G\in\cG\inv\big(\cS(\bR^d)\big)$ such that $F-G\in\cS_0(N)^K$.
\end{proposition}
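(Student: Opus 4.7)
The plan is to use hypothesis (H) to construct a Schwartz function $g$ on $\bR^d$ whose restriction to $\Sigma_\cD$ matches $\cG F$ to infinite order transverse to the singular subset
$$
\check\Sigma := \Pi^{-1}\!\big(\{0\}\times\bR^{d_{\check\fw}}\big) \subset \Sigma_\cD,
$$
and then take $G \in \cS(N)^K$ to be the function supplied by Theorem \ref{hulanicki} with $\cG G = g|_{\Sigma_\cD}$.

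Concretely, I would first apply (H) to $F$, obtaining an asymptotic Hadamard-type expansion
$$
\cG F(\xi_{\fw_0}, \xi') \sim \sum_{\alpha} \xi_{\fw_0}^{\alpha}\, f_\alpha(\xi'),
\qquad \xi' = (\xi_{\check\fw}, \xi_\fv, \xi_{\fv,\fw_0}),
$$
along $\check\Sigma$, in which each coefficient $f_\alpha$ extends to a Schwartz function on $\bR^{d - d_{\fw_0}}$ depending continuously on $F$. A standard Borel-summation argument with rapidly shrinking cutoffs then produces
$$
g(\xi) = \sum_\alpha \chi(\lambda_\alpha\, \xi_{\fw_0})\, \xi_{\fw_0}^\alpha\, \tilde f_\alpha(\xi') \in \cS(\bR^d),
$$
with $\chi \in C_c^\infty(\bR^{d_{\fw_0}})$ equal to $1$ near the origin and $\lambda_\alpha \to \infty$ chosen so that the series converges absolutely in every Schwartz seminorm. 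By construction $\cG F - g|_{\Sigma_\cD}$ vanishes to infinite order transversally along $\check\Sigma$. Theorem \ref{hulanicki} then yields $G \in \cS(N)^K$ with $\cG G = g|_{\Sigma_\cD}$, so that $G \in \cG^{-1}\!\big(\cS(\bR^d)\big)$ by construction.

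It remains to verify $H := F - G \in \cS_0(N)^K$. Since the Hadamard expansion of $\cG H$ along $\check\Sigma$ is trivial, I would proceed moment-by-moment: Proposition \ref{propradon*} applied to $\fs = \fw_0$ identifies $\Sigma(\check N, K)$ with $\check\Sigma$ via the embedding $\Lambda^{\fw_0}$ and gives $\cG'(\cR^{\fw_0} H) = \cG H \circ \Lambda^{\fw_0} = 0$, so the zeroth $\fw_0$-moment $\cR^{\fw_0} H$ vanishes. Higher $\fw_0$-moments are extracted analogously by first applying the extended symmetrisations $\lambda'_N(u)$ of Section \ref{subsec_extlambda'} for $u \in \cO(\fw_0)^K$, which on the spectral side amount to multiplication by $u(\zeta_0)$; the transverse vanishing of $\cG H$ at $\check\Sigma$ to all orders then forces every such weighted moment of $H$ to vanish. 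Lemma \ref{S_0-equivalence} finally gives $H \in \cS_0(N)^K$.

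The main obstacle is this last step: rigorously bookkeeping the duality between vanishing of Hadamard coefficients in (H) on the spectral side and vanishing of $\fw_0$-moments of $H$ on the group side. This is a variant of the Fourier correspondence between Taylor coefficients and polynomial moments, and the machinery of Sections \ref{subsec_rad_quotient}--\ref{subsec_extlambda'} is precisely designed to make this translation available for a nilpotent Gelfand pair, where the $\fw_0$-spectral variable enters $\Sigma_\cD$ only through the $K$-invariants ${\boldsymbol\rho}_{\fw_0}(\zeta_0)$ rather than as a linear coordinate.
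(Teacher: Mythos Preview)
Your outline has the right architecture (build $g$ matching a transverse Taylor expansion of $\cG F$ along the singular set, pull back to $G$, then show $F-G\in\cS_0(N)^K$), and this is also what the paper does. But two points are mis-specified, and the second one is exactly where the real work lies.

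\medskip

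\textbf{1. Wrong transverse variables.} Your expansion is in $\xi_{\fw_0}$ alone, with coefficients $f_\al(\xi')$ claimed Schwartz on $\bR^{d-d_{\fw_0}}$ where $\xi'=(\xi_{\check\fw},\xi_\fv,\xi_{\fv,\fw_0})$. This does not match what hypothesis~(H) produces. The operators $\tilde D^\al$ in Definition~\ref{def-H} are indexed by $\al=(\al',\al'')\in\bN^{d_{\fw_0}}\times\bN^{d_{\fv,\fw_0}}$, and Proposition~\ref{hadamard} turns (H) into an expansion on $\Sigma_\cD$ of the form
\[
\cG F(\xi)\ \sim\ \sum_\al \frac1{\al!}\,h_\al(\xi_{\check\fw},\xi_\fv)\,\xi_{\fw_0}^{\al'}\xi_{\fv,\fw_0}^{\al''}\,,
\]
with $h_\al$ Schwartz only in $(\xi_{\check\fw},\xi_\fv)$. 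This is forced by Lemma~\ref{dominant}: on $\Sigma_\cD$ the condition $\xi_{\fw_0}=0$ already implies $\xi_{\fv,\fw_0}=0$, so $\check\Sigma$ sits inside $\{\xi_{\fw_0}=0,\ \xi_{\fv,\fw_0}=0\}$ and both blocks of variables are transverse. Your $f_\al$ would then be polynomials in $\xi_{\fv,\fw_0}$ times Schwartz functions of $(\xi_{\check\fw},\xi_\fv)$, hence \emph{not} Schwartz on $\bR^{d-d_{\fw_0}}$, and a Borel sum with cutoffs in $\xi_{\fw_0}$ alone will not land in $\cS(\bR^d)$. The paper instead feeds the family $\{h_\al\}$ to Whitney's extension theorem (in the Schwartz version) with prescribed $\partial_{\xi_{\fw_0}}^{\al'}\partial_{\xi_{\fv,\fw_0}}^{\al''}$-derivatives on $\{0\}\times\bR^{d_{\check\fw}}\times\bR^{d_\fv}\times\{0\}$.

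\medskip

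\textbf{2. The verification $F-G\in\cS_0(N)^K$ is not soft.} You propose to read off vanishing moments from ``transverse vanishing of $\cG H$ along $\check\Sigma$'' via $\la'_N(u)$, $u\in\cO(\fw_0)^K$. The obstruction is that what you actually control is the family of \emph{group-side} remainders in \eqref{expansion}: for each $k$ one has $F=\sum_{[\al]\le k}\frac1{\al!}D^\al F_\al+\sum_{|\gamma|=k+1}\partial_{w_0}^\gamma R_\gamma$, and the corresponding spectral remainder $r_k=g-\sum_{[\al]\le k}\frac1{\al!}h_\al\,\xi_{\fw_0}^{\al'}\xi_{\fv,\fw_0}^{\al''}$ is only known on $\Sigma_\cD$. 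There is no direct Fourier duality turning ``$\cG H$ flat at $\check\Sigma$'' into ``all $w_0$-moments of $H$ vanish'', because spherical functions are not characters in $w_0$ and $\Sigma_\cD$ is thin near $\check\Sigma$. The paper's argument is concrete: fix $\beta$, pick $k$ large, and observe that $\int w_0^\beta(G-F)\,dw_0=\int w_0^\beta\big(G-\sum_{[\al]\le k}\frac1{\al!}D^\al F_\al\big)\,dw_0$ since the remainder integrates to zero. One then decomposes the integrand via the partition of unity of Lemma~\ref{xi-partition}, i.e.\ as $\sum_{t,j}\big(G-\sum D^\al F_\al\big)*_{\fw_0}\psi_{t,j}$; each summand lies in $\cS_0(N)^K$ trivially (its $\fw_0$-Fourier transform vanishes near $0$), so the issue is convergence of the series in a Schwartz norm controlling the $\beta$-moment. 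This is where the dominant-variable inequalities of Lemma~\ref{dominant} enter, giving the decay $\|r_k\eta_{t,j}\|_{\cS(\bR^d),m}=O(r^{-|j|})$ for $k$ large enough. Your sketch does not supply a substitute for this step.
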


The condition named hypothesis (H) will be formulated in Section \ref{subsec_hypH}.
In Section \ref{subsec_dominant}, we explain our concept of dominant variable on the Gelfand spectrum; this concept  will be  essential in the proofs of Proposition \ref{hadamard} and Proposition \ref{F-G} 
in Sections \ref{subsec_pf_prop_hadamard} and \ref{subsec_pf_propF-G}
respectively.

\subsection{Hypothesis (H)}
\label{subsec_hypH}

Let $(N,K)$ be a nilpotent Gelfand pair.
We consider $\check \fn = \fv\oplus\check \fw$
and  the nilpotent Gelfand pair  $(\check N,K)$ obtained by central reduction of $\fw_0$. By Proposition \ref{trivial}, $(\check N,K)$ satisfies property (S).

We fix a Hilbert basis ${\boldsymbol\rho}=({\boldsymbol\rho}_{\fw_0},{\boldsymbol\rho}_{\check\fw},{\boldsymbol\rho}_\fv,{\boldsymbol\rho}_{\fv,\fw_0})$ as in Section \ref{subsec_hilbert_base}.
For a multi-index $\al=(\al',\al'')\in\bN^{d_{\fw_0}}\times\bN^{d_{\fv,\fw_0}}$, we denote by  $[\al]$ the degree of   ${\boldsymbol\rho}_{\fw_0}^{\al'}{\boldsymbol\rho}_{\fv,\fw_0}^{\al''}$ in the $\fw_0$-variables.

For $\rho_j\in{\boldsymbol\rho}_{\fw_0}\cup{\boldsymbol\rho}_{\fv,\fw_0}$, let $\tilde D_j\in\bD(\check N)\otimes\cP(\fw_0)$ denote $(\la'_{\check N}\otimes I)(\rho_j)$, where $\la'_{\check N}$ is the symmetrisation operator \eqref{modsym} for $\check N$, and $\rho_j$ is regarded as an element of $\cP(\check\fn)\otimes\cP(\fw_0)$. 
If $\al=(\al',\al'')\in\bN^{d_{\fw_0}}\times\bN^{d_{\fv,\fw_0}}$, then
$$
\tilde D^\al={\boldsymbol\rho}_{\fw_0}^{\al'}\tilde D^{\al''}
$$
has degree $[\al]$ in the $\fw_0$-variables.

\medskip

\begin{definition}\label{def-H} 
We say that $(N,K)$ satisfies {\rm hypothesis (H)} if, for any  $K$-invariant function $G$ on $\check N\times\fw_0$ of the form
\begin{equation}\label{G}
 G(v,w_0,u)=\sum_{|\gamma|=k}w_0^\gamma G_\gamma(v,u)\ ,
\end{equation}
with $G_\gamma\in\cS(\check N)$, 
 there exist functions $H_{\al}\in\cS(\check N)^K$, for $[\al]=k$, such that
\begin{equation}\label{Hal}
 G=\sum_{[\al]= k}\frac1{\al!}\tilde D^{\al} H_{\al}\ .
\end{equation}
\end{definition}

The following statement has been essentially proved in  \cite{FRY2}. We give the explicit proof for completeness.

\begin{proposition}\label{H-checkNabelian}
If $\check\fw$ is trivial, i.e. if $\check N$ is abelian, hypothesis (H) is satisfied.
\end{proposition}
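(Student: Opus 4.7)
The plan is to reduce the statement to a Schwartz-topology version of the equivariant Schwarz theorem via a partial Fourier transform in $v$. First, since $\check\fw=\{0\}$ we have $\check N=\fv$ abelian, so by Lemma~\ref{lem_sym}(ii),(iv) the symmetrisation $\la'_{\check N}$ acts on $\cP(\fv)$ as the substitution $v\mapsto i^{-1}\nabla_v$. Therefore, for $\rho_j\in{\boldsymbol\rho}_{\fw_0}$ the operator $\tilde D_j$ is multiplication by the polynomial $\rho_j(w_0)$, and for $\rho_j\in{\boldsymbol\rho}_{\fv,\fw_0}$ one has $\tilde D_j=\rho_j(i^{-1}\nabla_v,w_0)$. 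Since $v$-derivatives commute with multiplications by polynomials in $w_0$ (and with each other), the composition unwinds into the closed form
\[
\tilde D^\al={\boldsymbol\rho}_{\fw_0}^{\al'}(w_0)\cdot{\boldsymbol\rho}_{\fv,\fw_0}^{\al''}(i^{-1}\nabla_v,w_0).
\]

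Next, let $\cF_v$ denote the partial Fourier transform in $v$ and set $\hat G:=\cF_v G$. Because $K$ acts orthogonally on $\fv$, $\cF_v$ preserves $K$-invariance, so $\hat G$ is $K$-invariant, Schwartz in $\hat v\in\fv^*$, and still a homogeneous polynomial of degree $k$ in $w_0$. On the Fourier side $\tilde D^\al$ becomes multiplication by the $K$-invariant polynomial
$p_\al(\hat v,w_0):={\boldsymbol\rho}_{\fw_0}^{\al'}(w_0)\,{\boldsymbol\rho}_{\fv,\fw_0}^{\al''}(\hat v,w_0)$,
which is itself homogeneous of degree $k$ in $w_0$. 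Identity \eqref{Hal} is thus equivalent, after applying $\cF_v$, to a decomposition
\[
\hat G(\hat v,w_0)=\sum_{[\al]=k}\frac{1}{\al!}\,p_\al(\hat v,w_0)\,\hat H_\al(\hat v)
\]
with $\hat H_\al\in\cS(\fv^*)^K$; then $H_\al:=\cF_v^{-1}\hat H_\al\in\cS(\fv)^K$ will produce the required $H_\al$ by applying $\cF_v^{-1}$ to both sides.

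The only remaining step, and the main technical point, is to establish the following Schwartz-topology, equivariant refinement of G. Schwarz's theorem: for the finite-dimensional $K$-module $W=\cP^k(\fw_0)$, the $\cS(\fv^*)^K$-module $(\cS(\fv^*)\otimes W)^K$ of $K$-equivariant Schwartz maps $\fv^*\to W$ is generated by the $K$-equivariant polynomial maps. That the latter form a finitely generated $\cP(\fv^*)^K=\bC[{\boldsymbol\rho}_\fv]$-module with generating set $\{p_\al:[\al]=k\}$ is an elementary algebraic consequence of the Hilbert basis structure ${\boldsymbol\rho}=({\boldsymbol\rho}_\fv,{\boldsymbol\rho}_{\fw_0},{\boldsymbol\rho}_{\fv,\fw_0})$ together with a homogeneity count in $w_0$, since every $K$-invariant monomial of total $w_0$-degree $k$ is of the form ${\boldsymbol\rho}_\fv^\beta\,p_\al$. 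The genuine analytic input is the upgrade from polynomial to Schwartz coefficients; this can be obtained by decomposing $W$ into $K$-isotypic components and applying on each component the smooth equivariant form of Schwarz's theorem~\cite{Schw} combined with its Schwartz-topology refinement in the spirit of \cite{ADR2}, or alternatively along the lines of Lemma~\ref{tensor-invariants}.
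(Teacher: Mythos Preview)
Your proof is correct and follows essentially the same route as the paper's: take the partial Fourier transform in $v$, reduce \eqref{Hal} to a multiplicative decomposition of $\hat G\in(\cS(\fv^*)\otimes\cP^k(\fw_0))^K$ over $\cS(\fv^*)^K$ with coefficients the invariant monomials ${\boldsymbol\rho}_{\fw_0}^{\al'}{\boldsymbol\rho}_{\fv,\fw_0}^{\al''}$, and invoke the Schwartz-coefficient version of the equivariant Schwarz theorem (Lemma~\ref{tensor-invariants} adapted to Schwartz functions, i.e.\ \cite[Prop.~2.1]{FRY2}). Your write-up is in fact more explicit than the paper's about how $\tilde D^\al$ transforms under $\cF_v$ and about why the $p_\al$ generate the relevant module; the paper compresses all of this into a single citation of Lemma~\ref{tensor-invariants} (and its displayed decomposition appears to have the roles of ${\boldsymbol\rho}_\fv$ and ${\boldsymbol\rho}_{\fw_0}$ interchanged, which your version gets right).
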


\begin{proof}
Under our hypotheses we are given $G(v,w_0)=\sum_{|\gamma|=k}w_0^\gamma G_\gamma(v)\in \big(\cS(\fv)\otimes\cP^k(\fw_0)\big)^K$. The same holds for the partial Fourier transform in $\fv$,
$$
\cF_\fv G(v,w_0)=\sum_{|\gamma|=k}w_0^\gamma \widehat G_\gamma(v).
$$

By Lemma \ref{tensor-invariants}, adapted to Schwarts functions, cf. \cite[Prop. 2.1]{FRY2},
$$
\cF_\fv G(v,w_0)=\sum_\al u_\al({\boldsymbol\rho}_{\fw_0}){\boldsymbol\rho}_{\fv}^{\al_1}{\boldsymbol\rho}_{\fv,\fw_0}^{\al_2}.
$$

Undoing the Fourier transform, we obtain \eqref{Hal}.
\end{proof}

Hypothesis (H) was proved in \cite{FRY2} for the nilpotent Gelfand pairs in Vinberg's classification~\cite{V2}, including those listed in Table \eqref{table_block1+2} and discussed in Section \ref{sec-application}.

At the heart of the proof of Proposition \ref{F-G} is the following property.
\begin{proposition}\label{hadamard}
Let $(N,K)$ be a nilpotent Gelfand pair satisfying hypothesis (H).
We keep the notation of hypothesis (H).
We also consider the family of operators $\cD=(\cD_{\check \fw},\cD_{\fw_0},\cD_{\fv},\cD_{\fv,\fw_0})$ obtained by symmetrisation
of ${\boldsymbol\rho}$ via $\la'_N$.
Let $F\in\cS(N)^K$, and assume that 
$$
F(v,w_0,u)=\sum_{|\gamma|=k}\de_{w_0}^\gamma R_\gamma(v,w_0,u)\ ,
$$
with $R_\gamma\in\cS(N)$ for every $\gamma$. Then, for every $\al$ with $[\al]=k$, there exists a function $F_\al\in\cS(N)^K$ such that
$$
\cG F_\al(\xi)=h_\al(\xi_{\check\fw},\xi_\fv)\ ,\qquad (\xi\in\Sigma_\cD)\ ,
$$
with $h_\al\in\cS(\bR^{d_{\check \fw}+d_{\fv}})$, and
$$
F(v,w_0,u)=\sum_{[\al]=k}\frac1{\al!}D^\al F_\al(v,w_0,u)+\sum_{|\gamma'|=k+1}\de_w^\beta R'_{\gamma'}(v,w_0,u)\ ,
$$
with $R'_{\gamma'}\in\cS(N)$ for every $\gamma'$.
\end{proposition}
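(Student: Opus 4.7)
The plan is to combine three ingredients: a Taylor expansion at $\zeta_0=0$ of the partial Fourier transform $\widehat F$ of $F$ in the $w_0$-variables (with $\zeta_0$ denoting the $w_0$-Fourier variable), hypothesis (H), and property (S) for the quotient pair $(\check N,K)$, which holds thanks to Proposition~\ref{trivial} since $K$ acts trivially on $\check\fw$. The assumption on $F$ is equivalent, after Fourier transforming in $w_0$, to $\widehat F(v,\zeta_0,u)$ vanishing to order $k$ at $\zeta_0=0$. First, I would set
\[
G_\gamma(v,u) \;=\; \frac{(-i)^k}{\gamma!}\int_{\fw_0} w_0^\gamma F(v,w_0,u)\,dw_0,\qquad |\gamma|=k,
\]
each $G_\gamma\in\cS(\check N)^K$, so that the degree-$k$ Taylor polynomial of $\widehat F$ at $\zeta_0=0$ is the homogeneous polynomial $\widehat F_{[k]}(v,\zeta_0,u)=\sum_{|\gamma|=k}\zeta_0^\gamma G_\gamma(v,u)$.

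Next I would apply hypothesis (H) to the polynomial $G(v,w_0,u)=\sum_{|\gamma|=k}w_0^\gamma G_\gamma(v,u)$ to obtain $H_\al\in\cS(\check N)^K$, indexed by $[\al]=k$, with $G=\sum_{[\al]=k}\frac{1}{\al!}\tilde D^\al H_\al$. By property (S) for $(\check N,K)$, the spherical transform $\cG_{\check N}H_\al$ extends to some $h_\al\in\cS(\bR^{d_{\check\fw}+d_\fv})$. I would then define $F_\al\in\cS(N)^K$ as the convolution kernel of the operator $h_\al(\cD_{\check\fw},\cD_\fv)$ obtained via joint functional calculus of the commuting family $\cD_{\check\fw}\cup\cD_\fv\subset\bD(N)^K$; its Schwartz property follows from the extension of Theorem~\ref{hulanicki} to commuting sub-families of self-adjoint Rockland operators. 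The eigenvalue of $h_\al(\cD_{\check\fw},\cD_\fv)$ on a bounded spherical function $\ph$ is $h_\al(\xi_{\check\fw}(\ph),\xi_\fv(\ph))$ by the spectral theorem, which gives $\cG F_\al(\xi)=h_\al(\xi_{\check\fw},\xi_\fv)$ on $\Sigma_\cD$, as required.

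The heart of the argument is matching the degree-$k$ Taylor coefficients at $\zeta_0=0$. For $D_i\in\cD_{\fw_0}\cup\cD_{\fv,\fw_0}$ associated to a bi-homogeneous polynomial $\rho_i$ of $w_0$-degree $s_i$, write $\rho_i=\sum_j p_j(v)q_j(w_0)$; by Lemma~\ref{lem_sym} we have $\la'_N(\rho_i)=\sum_j\la'_N(p_j)\la'_N(q_j)$ modulo operators of strictly lower order. The central operators $\la'_N(q_j)=q_j(i\inv\nabla_{w_0})$ become multiplication by $q_j(\zeta_0)$ after the Fourier transform in $w_0$, while $\la'_N(p_j)$ becomes a $\zeta_0$-dependent operator on functions of $(v,u)$ whose value at $\zeta_0=0$ is precisely $\la'_{\check N}(p_j)$. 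Consequently $\hat D_i:=\cF_{w_0}D_i\cF_{w_0}\inv$ coincides, modulo $O(|\zeta_0|^{s_i+1})$, with the operator $\tilde D_i$ in which the $w_0$-multiplication is replaced by $\zeta_0$-multiplication; taking products yields $\hat D^\al=\tilde D^\al|_{w_0\to\zeta_0}+O(|\zeta_0|^{k+1})$ with $k=[\al]$. The identity $\cR^{\fw_0}F_\al=H_\al$, which follows from the relation $\cG_{\check N}(\cR^{\fw_0}F_\al)=\cG F_\al\circ\Lambda^{\fw_0}=\cG_{\check N}H_\al$ (cf. Proposition~\ref{propradon*}) and the injectivity of $\cG_{\check N}$, gives $\widehat{F_\al}(v,0,u)=H_\al(v,u)$. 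Putting these together, the degree-$k$ Taylor polynomial of $\sum_{[\al]=k}\frac{1}{\al!}\widehat{D^\al F_\al}$ at $\zeta_0=0$ equals $\sum_{[\al]=k}\frac{1}{\al!}(\tilde D^\al H_\al)|_{w_0\to\zeta_0}=\sum_{|\gamma|=k}\zeta_0^\gamma G_\gamma=\widehat F_{[k]}$ by hypothesis (H). Therefore $\widehat{F-\sum_{[\al]=k}\frac{1}{\al!}D^\al F_\al}$ vanishes to order $k+1$ at $\zeta_0=0$, and Hadamard's lemma yields the representation $\sum_{|\gamma'|=k+1}\partial_{w_0}^{\gamma'}R'_{\gamma'}$ with $R'_{\gamma'}\in\cS(N)$.

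The main technical obstacle is the verification of the symbol identity $\hat D_i=\tilde D_i|_{w_0\to\zeta_0}+O(|\zeta_0|^{s_i+1})$, which requires carefully tracking the Baker--Campbell--Hausdorff-induced corrections in $\la'_N$ on $N$ when transporting through the Fourier transform in $w_0$ and then composing powers of such operators. A secondary issue is the extension of Theorem~\ref{hulanicki} to the sub-family $\cD_{\check\fw}\cup\cD_\fv$ of the full generating system on $N$, which follows from the standard functional calculus for commuting families of Rockland operators developed in \cite{ADR2, Ven}.
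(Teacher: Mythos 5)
Your proposal follows essentially the same route as the paper's proof: Fourier transform in the $\fw_0$-variables, identification of the degree-$k$ Taylor polynomial $G$ at $\zeta_0=0$, hypothesis (H) to produce the $H_\al$, property (S) for $(\check N,K)$ (Proposition \ref{trivial}) to get the $h_\al$, the identity $\check\cR F_\al=H_\al$ via the relation between the two spherical transforms, a comparison of operators showing the discrepancy is of order $k+1$ in $\zeta_0$, and Hadamard's lemma in the Schwartz class to produce the remainder. Your Fourier-side comparison of $\hat D_i$ with $\tilde D_i$ is just a repackaging of the paper's comparison of $D^\al$ with the symmetrisation taken on the direct product $\check N\times\fw_0$ after undoing the Fourier transform; note that by Lemma \ref{lem_sym}, Part \eqref{item_lem_sym_pdtz}, the factorisation $\la'_N(p_jq_j)=\la'_N(p_j)\la'_N(q_j)$ is exact for central $q_j$, so the ``lower order'' corrections you worry about do not even arise at that step.

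Two justifications as written need repair. First, the construction of $F_\al$: you define it as the kernel of $h_\al(\cD_{\check\fw},\cD_\fv)$ and invoke ``the extension of Theorem \ref{hulanicki} to commuting sub-families of self-adjoint Rockland operators''; but the central derivatives in $\cD_{\check\fw}$ are not Rockland operators, so that statement does not literally apply, and in general a Schwartz multiplier in a family that does not control all directions produces a kernel with a Dirac factor rather than a Schwartz function. The conclusion is still true here because the sub-family controls a hypoelliptic sublaplacian (the sum of the $\la'(|v_k|^2)$ in $\cD_\fv$), so the weighted-subcoercive versions of the functional calculus in \cite{ADR2, Mar} apply; but the paper's own device is simpler and avoids the issue entirely: by Lemma \ref{dominant} the variables $\xi_{\fw_0}$ and $\xi_{\fv,\fw_0}$ are dominated by $\xi_\fv$ on $\Sigma_\cD$, hence $\xi\mapsto h_\al(\xi_{\check\fw},\xi_\fv)$ belongs to $\cS(\Sigma_\cD)$, and Theorem \ref{hulanicki} applied to the full generating system $\cD$ yields $F_\al\in\cS(N)^K$ with the required transform. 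Second, your claim that each $G_\gamma$ lies in $\cS(\check N)^K$ is false in general (and invariance of the individual coefficients would not even imply invariance of $G$, since $K$ moves the monomials $w_0^\gamma$); what hypothesis (H) actually requires, and what is true, is that the whole polynomial $G=\sum_{|\gamma|=k}\zeta_0^\gamma G_\gamma$ is $K$-invariant, being the degree-$k$ Taylor term at $\zeta_0=0$ of the $K$-invariant function $\widehat F$. With these two points fixed, your argument coincides with the paper's proof.
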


The proof of Proposition \ref{hadamard} will be given in Section
\ref{subsec_pf_prop_hadamard}. It relies on the notion of dominant variable in the Gelfand spectrum explained in the next section.

Before going into it, we need the following notation and observations.
Consider the transform  $\check \cR$ defined as in \eqref{radon*} and \eqref{opradon*} with $\fs=\fw_0$ and mapping
$\cS(N)\to \cS(\check N)$ and $\bD(N)$ to $\bD(\check N)$.
As the restrictions to $\check \fn$ of 
${\boldsymbol \rho}_{\fw_0}$ and 
${\boldsymbol \rho}_{\fv,\fw_0}$ are zero, 
by \eqref{opradon*1}, we have:
$$
\check \cR \cD_{\fw_0}=0
\quad\mbox{and}\quad
\check \cR \cD_{\fv,\fw_0}=0.
$$
Recall that ${\boldsymbol \rho}_{\check \fw}$ is a system of coordinates of $\check \fw$ since the action of $K$ on $\check \fw$ is trivial, and 
that ${\boldsymbol \rho}_{\fv}$ is a Hilbert basis for the action of $K$ on $\fv$.
Hence ${\boldsymbol \rho}_{\fv}$, ${\boldsymbol \rho}_{\check \fw}$
form a Hilbert basis $\check {\boldsymbol \rho}$ for $(\check N, K)$.
The operators obtained by symmetrisation are by \eqref{opradon*1}
$$
\check \cD_{\fv} = \check\cR \cD_{\fv}
\quad\mbox{and}\quad
\check \cD_{\check \fw} = \check\cR \cD_{\check \fw} = i^{\inv} \nabla_{\check \fw},
$$
and form a generating family $\check \cD=(\check \cD_{\fv},\check \cD_{\check \fw} )$ of $\bD(N)^K$.

Denote by $\Sigma$, $\check\Sigma$ the  Gelfand spectra of $(N,K)$ and $(\check N, K)$
and by $\Sigma_\cD$, $\check\Sigma_{\check\cD}$
their realisations via $\cD$ and $\check \cD$ respectively.
The proof of Proposition \ref{central}
given in Section \ref{subsec_pf_prop_central}
implies that we may consider 
$\Sigma_{\check\cD}$ as the following subset of $\Sigma$:
\begin{equation}\label{check-sigma}
\Sigma_{\check\cD}=\big\{(0,\xi_{\check\fw},\xi_\fv,0):(\xi_{\check\fw},\xi_\fv)\in\check\Sigma_{\check\cD}\big\}\ ;
\end{equation}
Moreover, denoting by  $\check\cG$ the Gelfand transform of $(\check N,K)$, 
we have the identity
\begin{equation}\label{gelfand-radon}
\check\cG(\check\cR F)(\xi_{\check\fw},\xi_\fv)=\cG F(0,\xi_{\check\fw},\xi_\fv,0)\ .
\end{equation}

\subsection{Dominant variables of the Gelfand spectrum}
\label{subsec_dominant}

Let $(N,K)$ be a nilpotent Gelfand pair.
We fix a Hilbert basis ${\boldsymbol\rho}=({\boldsymbol\rho}_{\fw_0},{\boldsymbol\rho}_{\check\fw},{\boldsymbol\rho}_\fv,{\boldsymbol\rho}_{\fv,\fw_0})$ as in Section \ref{subsec_hilbert_base} and
 the corresponding family of operators $\cD=(\cD_{\check \fw},\cD_{\fw_0},\cD_{\fv},\cD_{\fv,\fw_0})$ obtained by symmetrisation.

We have already observed that the choice of bi-homogeneous Hilbert basis 
yields natural dilations $D(\del)$ given in \eqref{dilations} on the realisation $\Sigma_\cD$ of the spectrum. This leads us to 
introduce the homogeneous norm on $\bR^d$ , compatible with the dilations $D(\del)$,
\begin{equation}\label{norm}
\|\xi\|=\sum_{j=1}^d|\xi_j|^{\frac1{\nu_j}}\ .
\end{equation} 

We may write any element $\xi$ of 
$\bR^d=\bR^{d_{\fw_0}}\times\bR^{d_{\check\fw}}\times\bR^{d_{\fv}}\times\bR^{d_{\fv,\fw_0}}$ 
as $\xi=\big(\xi_{\fw_0},\xi_{\check\fw},\xi_\fv,\xi_{\fv,\fw_0}\big)$
and allow ourselves to write 
$$
\|\xi_{\fw_0}\|=\|\big(\xi_{\fw_0},0,0,0\big)\|,
$$
and similarly for $\xi_{\check\fw}$, $\xi_\fv$, $\xi_{\fv,\fw_0}$.

In the sense of the next lemma, 
the variable $\xi_\fv$ dominates the other on the spectrum $\Sigma_\cD$:

\begin{lemma}\label{dominant}
Let ${\boldsymbol\rho}$ be  a bi-homogeneous Hilbert basis ${\boldsymbol\rho}$ of a nilpotent Gelfand pair $(N,K)$
which splits as ${\boldsymbol\rho}=({\boldsymbol\rho}_{\fw_0},{\boldsymbol\rho}_{\check\fw},{\boldsymbol\rho}_\fv,{\boldsymbol\rho}_{\fv,\fw_0})$. 
Let also $\cD=\lambda'({\boldsymbol\rho})$ be the associated $d$-tuple of differential operators.
Let $\xi=(\xi_{\fw_0},\xi_{\check\fw},\xi_\fv,\xi_{\fv,\fw_0})$ be a point in 
the spectrum $\Sigma_\cD$.
We have the following inequalities:
\begin{enumerate}
\item[\rm(i)] if $\rho_j\in\cP^{\nu'_j}(\fv)\otimes\cP^{\nu''_j}(\fw_0)$, with $\nu'_j,\nu''_j>0$, then $|\xi_j|\le C\|\xi_\fv\|^{\frac{\nu'_j}2}\|\xi_{\fw_0}\|^{\nu''_j}$;
\item[\rm(ii)] $\|\xi\|\le C\|\xi_\fv\|$.
\end{enumerate}

In particular, if $\xi_{\fw_0}=0$, then also $\xi_{\fv,\fw_0}=0$.
\end{lemma}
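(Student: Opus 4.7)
The argument combines the $D(\delta)$-homogeneity of $\Sigma_\cD$ with compactness on $\Sigma_\cD\cap\{\|\xi\|=1\}$ together with two boundary observations.

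First I would prove the final assertion ``$\xi_{\fw_0}=0\Rightarrow\xi_{\fv,\fw_0}=0$''. By Lemma~\ref{Pi}, $\xi_{\fw_0}(\ph_{\zeta,\omega,\mu})={\boldsymbol\rho}_{\fw_0}(\zeta_0)$, and since ${\boldsymbol\rho}_{\fw_0}$ is a homogeneous Hilbert basis of $\cP(\fw_0)^K$ separating $K$-orbits, the vanishing ${\boldsymbol\rho}_{\fw_0}(\zeta_0)=0$ forces $\zeta_0=0$, i.e.\ $\zeta\in\check\fw$. Taking $\fs=\fw_0$ in Proposition~\ref{lem_propradon*} then gives $\ph_{\zeta,\omega,\mu}\in\mathrm{Im}\,\Lambda^{\fw_0}$. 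The polynomials in ${\boldsymbol\rho}_{\fv,\fw_0}$ have strictly positive degree in $w_0$ and hence restrict to $0$ on $\check\fn=\fv\oplus\check\fw$, so by \eqref{opradon*1} and \eqref{radon-eigenvalues} the corresponding eigenvalues vanish.

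For part (i), both sides of the inequality are continuous non-negative $D(\delta)$-homogeneous functions on $\Sigma_\cD$ of the same degree (by the choice of exponents in the definition \eqref{norm} of $\|\cdot\|$, using that $\|\xi_\fv\|,\|\xi_{\fw_0}\|$ are $1$-homogeneous). Since $D(\delta)$ preserves $\Sigma_\cD$, it is enough to bound the ratio on the compact set $\Sigma_\cD\cap\{\|\xi\|=1\}$, i.e.\ to check that $|\xi_j|=0$ whenever the right-hand side vanishes. This occurs only when $\|\xi_\fv\|=0$ or $\|\xi_{\fw_0}\|=0$: the latter is the final assertion just proved (since $\rho_j\in{\boldsymbol\rho}_{\fv,\fw_0}$); for the former I would use the fact that $\la'(|v|^2)$ is, up to sign, the $\fv$-sub-Laplacian and belongs to $\bR[\cD_\fv]$ with vanishing constant term, while the operator $d\pi_{\zeta,\omega}(\la'(|v|^2))$ is positive with trivial kernel unless $(\zeta,\omega)=(0,0)$. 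Hence $\xi_\fv=0$ forces the eigenvalue of $\la'(|v|^2)$ to vanish, whence $\ph\equiv1$ and $\xi=0$, giving in particular $\xi_j=0$. Continuity on the compact unit sphere then yields (i).

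Part (ii) follows by assembling the previous results: the bound $|\xi_j|^{1/\nu_j}\le\|\xi_\fv\|$ is tautological for $\rho_j\in{\boldsymbol\rho}_\fv$; for $\rho_j\in{\boldsymbol\rho}_{\fv,\fw_0}$ it follows from part (i) combined with $\|\xi_{\fw_0}\|\le C\|\xi_\fv\|$; and the remaining bounds $\|\xi_{\fw_0}\|,\|\xi_{\check\fw}\|\le C\|\xi_\fv\|$ are another homogeneity-plus-compactness argument, using once more the implication $\xi_\fv=0\Rightarrow\xi=0$ established above. The main obstacle is the boundary case $\|\xi_\fv\|=0$: its resolution needs two representation-theoretic inputs, namely that $\la'(|v|^2)$ lies in $\bR[\cD_\fv]$ (so its eigenvalue is determined by $\xi_\fv$ and vanishes when $\xi_\fv=0$) and that this eigenvalue is strictly positive on every non-trivial bounded spherical function. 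With these in hand, the rest of the argument is a routine homogeneity-plus-compactness exercise.
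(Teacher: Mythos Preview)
Your argument for the final assertion and for part (ii) is sound: the key point, that $\xi_\fv=0$ on $\Sigma_\cD$ forces $\xi=0$, means that $\|\xi_\fv\|$ is bounded \emph{away from zero} on $\Sigma_\cD\cap\{\|\xi\|=1\}$, and then homogeneity plus compactness immediately gives $\|\xi\|\le C\|\xi_\fv\|$. (You should note, as the paper does, that one may assume $|v|^2$ is literally a sum of elements of ${\boldsymbol\rho}_\fv$, since $\la'$ is not multiplicative and the statement ``$\la'(|v|^2)\in\bR[\cD_\fv]$'' is otherwise not automatic.)

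The genuine gap is in part (i). Your scheme is: both sides are continuous, $D(\delta)$-homogeneous of the same degree, and on the compact set $\Sigma_\cD\cap\{\|\xi\|=1\}$ the left side vanishes wherever the right side does; hence the ratio is bounded. This implication is false in general. On the unit sphere the zero set of the right-hand side is $\{\xi_{\fw_0}=0\}$, which is \emph{nonempty} (any $\ph_{\zeta,\omega,\mu}$ with $\zeta\in\check\fw$ lands there), so knowing only that $|\xi_j|$ vanishes on that set says nothing about the \emph{rate} at which $|\xi_j|\to0$ as $\|\xi_{\fw_0}\|\to0$. You need $|\xi_j|=O(\|\xi_{\fw_0}\|^{\nu''_j})$, not just $o(1)$, and the compactness argument cannot supply the exponent.

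The paper closes this gap by a different, operator-theoretic route: it uses the hypoelliptic estimate $\|X_{v_1}\cdots X_{v_m}F\|_2\le C\|L^{m/2}F\|_2$ for the sub-Laplacian $L=\la'(|v|^2)$, the analogous Fourier estimate $\|\partial_{w_1}\cdots\partial_{w_{m'}}F\|_2\le C\|\Delta_{\fw_0}^{m'/2}F\|_2$, and a Cauchy--Schwarz interpolation to obtain $\|D_jF\|_2\le C\|L^{\nu'_j}F\|_2^{1/2}\|\Delta_{\fw_0}^{\nu''_j}F\|_2^{1/2}$. Read on the joint spectrum, this $L^2$ inequality is exactly the quantitative bound (i). Your soft approach works when the comparison function is strictly positive on the unit sphere (as for (ii)), but for (i) you need an argument that actually produces the exponent $\nu''_j$, and the sub-Laplacian estimate is what does that.
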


\begin{proof}
Let $p(v)=|v|^2$, where $|\ |$ denotes the norm induced by a $K$-invariant inner product on $\fv$. 
Denote by $X_v$ the left-invariant vector field equal to $\de_v$ at the identity of $N$. Then $\la'(p)$ is the sublaplacian $L=-\sum X_{e_j}^2$, where $\{e_j\}$ is an orthonormal basis of $\fv$. Then $L$ is hypoelliptic and, for every $v_1,\dots,v_m\in\fv$ and $F\in\cS(N)$,
$$
\|X_{v_1}\cdots X_{v_m}F\|_2\le C_{v_1,\dots,v_m}\| L^{\frac m2}F\|_2\ ,
$$
cf. \cite{FS}.
For $w_1,\dots,w_m\in\fw_0$ and $F\in\cS(N)$, we also have, by classical Fourier analysis,
$$
\|\de_{w_1}\cdots \de_{w_m}F\|_2\le C_{w_1,\dots,w_m}\| \Delta_{\fw_0}^{\frac m2}F\|_2\ .
$$

Therefore,
$$
\begin{aligned}
\|\de_{w_1}\cdots \de_{w_{m'}}X_{v_1}\cdots X_{v_m}F\|_2^2&=\big|\lan\de_{w_1}^2\cdots \de_{w_{m'}}^2F,X_{v_1}\cdots X_{v_m}X_{v_m}\cdots X_{v_1}F\ran\big|\\
&\le C_{v_1,\dots,v_m,w_1,\dots,w_{m'}}\|L^mF\|_2\|\Delta_{\fw_0}^{m'}F\|_2\ .
\end{aligned}
$$

If $\rho_j\in\cP^{\nu'_j}(\fv)\otimes\cP^{\nu''_j}(\fw_0)$, then $D_j=\la'(\rho_j)$ is a linear combination of terms of this kind, with $m=\nu'_j$ and $m'=\nu''_j$. Therefore, for $F\in\cS(N)$,
\begin{equation}\label{control}
\|D_jF\|_2\le C\|L^{\nu'_j}F\|_2^\half\|\Delta_{\fw_0}^{\nu''_j}F\|_2^\half\ .
\end{equation}

 We may assume that ${\boldsymbol\rho}_\fv$ contains the squares $p_k=|v_k|^2$ of the norm restricted to mutually orthogonal irreducible components of $\fv$. Then $p$ is the sum of such $p_k$. The same can be said about $q(w_0)=|w_0|^2$ on~$\fw_0$. 

Denote by $\xi_k$  the component of $\xi_\fv$ corresponding to $\la'(p_k)$, $\xi_\ell$  the component of $\xi_{\fw_0}$ corresponding to $\la'(q_\ell)$,  and $\xi_j$ the component of $\xi_{\fv,\fw_0}$ corresponding to $D_j$. Then, since $\la'(p_k)$ and $\la'(q_\ell)$ are positive operators, we have $\xi_k,\xi_\ell\ge0$ in $\Sigma_\cD$. Hence \eqref{control} implies that, on~$\Sigma_\cD$,
$$
|\xi_j|\le C\Big(\sum_k\xi_k\Big)^\frac{\nu'_j}2\Big(\sum_\ell\xi_\ell\Big)^\frac{\nu''_j}2\le C\|\xi_\fv\|^\frac{\nu'_j}2\|\xi_{\fw_0}\|^{\nu''_j}\ .
$$

This proves (i). To prove (ii) it suffices to prove that $\|\xi_\fw\|\le C\|\xi_\fv\|$. For this, it suffices to observe that every derivative $\de_w^\al$ in the $\fw$-variables can be expressed as a combination of products $X_{v_1}\cdots X_{v_m}$ with $m=2|\al|$. Then, for every $F\in\cS(N)$,
$$
\|\de_w^\al F\|_2\le C\|L^{|\al|}F\|_2\ .\qedhere
$$
\end{proof}

\subsection{Proof of Proposition \ref{hadamard}}
\label{subsec_pf_prop_hadamard}

Let $(N,K)$ be a nilpotent Gelfand pair.
We keep the notation of Sections \ref{subsec_hypH} and
 \ref{subsec_dominant}.
We  assume  that $(N,K)$  satisfies hypothesis (H) and recall
 that the nilpotent Gelfand pair $(\check N,K)$ satisfies (S).

Consider the Fourier transform \eqref{fourier} of $F$ in the $\fw_0$-variables. Then
$$
\widehat F(v,\zeta,u)=i^k\sum_{|\gamma|=k}\zeta^\gamma\widehat R_\gamma(v,\zeta,u)\ .
$$

Setting $G_\gamma(v,u)=i^k\widehat R_\gamma(v,0,u)\in\cS(\check N)$, we have
$$
\widehat F(v,\zeta,u)=\sum_{|\gamma|=k}\zeta^\gamma G_\gamma(v,u)+\sum_{|\gamma'|=k+1}\zeta^{\gamma'} S_{\gamma'}(v,\zeta,u)\ ,
$$

Since $G=\sum_\gamma \zeta^\gamma G_\gamma$ is $K$-invariant, 
as hypothesis (H) is satisfied, see Section \ref{subsec_hypH},
 there exist $H_\al\in\cS(\check N)^K$ such that
$$
\widehat F(v,\zeta,u)=\sum_{[\al]= k}\frac1{\al!}(\tilde D^{\al} H_{\al})(v,\zeta,u)+\sum_{|\gamma'|=k+1}\zeta^{\gamma'} S_{\gamma'}(v,\zeta,u)\ ,
$$

By property (S)  for the pair $(\check N,K)$, for every $\al$ there is $h_\al\in\cS(\bR^{d_{\check\fw}+d_\fv})$ such that $\check\cG H_\al={h_\al}_{|_{\Sigma_{\check\cD}}}$. By Lemma \ref{dominant} (ii),  $|\xi_\fw|$ and $|\xi_{\fv,\fw_0}|$ are bounded up to constants by powers of $|\xi_\fv|$ on~$\Sigma_\cD$. 
By \eqref{check-sigma} and \eqref{gelfand-radon},
we then have that
$$
\tilde h_\al(\xi)=h_\al(\xi_{\check\fw},\xi_{\fv,\fw_0})\in\cS(\Sigma_\cD)\ .
$$

If $F_\al\in\cS(N)^K$ is the function such that $\cG F_\al$ equals $\tilde h_\al$ on $\Sigma_\cD$, it follows from \eqref{gelfand-radon} that $H_\al=\check\cR F_\al$ for every $\al$. This is equivalent to saying that
$\widehat F_\al(v,0,u)=H_\al(v,u)$. By Hadamard's lemma,
$$
\widehat F_\al(v,\zeta,u)-H_\al(v,u)=\sum_{j=1}^{d_{\fw_0}}\zeta_j K_j(v,\zeta,u)\ ,
$$
with $K_j$ smooth. Therefore,
$$
\begin{aligned}
\widehat F(v,\zeta,u)&=\sum_{[\al]= k}\frac1{\al!}(\tilde D^{\al} \widehat F_{\al})(v,\zeta,u)-\sum_{[\al]= k}\sum_{j=1}^{d_{\fw_0}}\frac1{\al!}\zeta_j\tilde D^{\al} K_j(v,\zeta,u) +\sum_{|\gamma'|=k+1}\zeta^{\gamma'} S_{\gamma'}(v,\zeta,u)\\
&=\sum_{[\al]= k}\frac1{\al!}(\tilde D^{\al} \widehat F_{\al})(v,\zeta,u)+\sum_{|\gamma'|=k+1}\zeta^{\gamma'} S'_{\gamma'}(v,\zeta,u)\ .
\end{aligned}
$$

Notice that $S'=\sum_{|\gamma'|=k+1}\zeta^{\gamma'} S'_{\gamma'}\in\cS(\check N\times\fw_0)$ because it is the difference of two Schwartz functions. Since the derivatives of order up to $k$ vanish for $\zeta=0$, it follows from Hadamard's lemma that the same sum $S'$ can be obtained by   replacing each  $S'_{\gamma'}$ by a function $S''_{\gamma'}\in\cS(\check N\times\fw_0)$.

Now we can undo the Fourier transform. In doing so, the monomials in $\zeta$ are turned into derivatives in the $\fw_0$-variables, and each $\tilde D_j$ is turned into the differential operator $\lambda'_{\tilde N}(\rho_j)$, where the symmetrisation is taken on the direct product $\bar N=\check N\times\fw_0$. We denote by $\bar D_j$ this operator.

We then have
$$
F=\sum_{[\al]= k}\frac1{\al!}\bar D^{\al} F_{\al}+\sum_{|\gamma'|=k+1}\de_w^{\gamma'} U_{\gamma'}\ ,
$$
where $F_\al$ satisfies the stated requirements and $U_{\gamma'}\in\cS(\bar N)$ for every $\gamma'$.

It only remains to replace each operator $\bar D^\al$ with the corresponding $D^\al$. In order to do this, it is sufficient to compare the left-invariant vector field $X_{v_0}$ on $N$ corresponding to an element $v_0\in\fv$ with the  left-invariant vector field $\bar X_{v_0}$ on $\bar N$ corresponding to the same $v_0$. Clearly, the difference $X_{v_0}-\bar X_{v_0}$ is a linear combination $\sum_{j=1}^{d_{\fw_0}}\ell_{j,v_0}(v)\de_{w_j}$ where the $\ell_{j,v_0}$ are linear functionals on $\fv$. Therefore, any difference $D^\al-\bar D^\al$ is a sum of terms, each of which contains at least $k+1$ derivatives in the $\fw_0$-variables. The corresponding term $(D^\al-\bar D^\al)F_\al$ is absorbed by the remainder term.

This concludes the proof of Proposition \ref{hadamard}.

\subsection{Proof of Proposition \ref{F-G}}
\label{subsec_pf_propF-G}
Let $(N,K)$ be a nilpotent Gelfand pair satisfying hypothesis (H).
We keep the notation of Definition \ref{def-H} and fix  $F\in\cS(N)^K$.

Consider the restriction of $\cG F$ to the set $\Sigma_{\check\cD}$ in \eqref{check-sigma}, which equals $\check\cG(\check\cR F)$ by \eqref{gelfand-radon}. By Proposition \ref{trivial}, $(\check N,K)$ satisfies property (S). Hence there exists
 $h_0\in\cS(\bR^{d_{\check\fw}+d_\fv})$  extending  $\check\cG(\check\cR F)$. By Lemma \ref{dominant}, $h(\xi)=h_0(\xi_{\check\fw},\xi_\fv)\in\cS(\Sigma_\cD)$. Let $F_0\in\cS(N)^K$ be the function such that $\cG F_0(\xi)=h$.

Then $\check\cR(F-F_0)=0$, which implies that $F-F_0=\sum_{j=1}^{d_{\fw_0}}\de_{w_j}G_j$, with $G_j\in\cS(N)$. By iterated application of Proposition \ref{hadamard}, we find a family $\{F_\al\}_{\al\in\bN^{d_{\fw_0}}\times\bN^{d_{\fv,\fw_0}}}$ such that, for every $k$,
\begin{equation}\label{expansion}
F=\sum_{[\al]\le k}\frac1{\al!}D^\al F_\al+\sum_{|\gamma|=k+1}\de_{w_0}^\gamma R_\gamma\ ,
\end{equation}
and, for every $\al$, $\cG F_\al(\xi)=h_\al(\xi_{\check\fw},\xi_\fv)$, with $h_\al\in\cS(\bR^{d_{\check\fw}+d_\fv})$.

By Whitney's extension theorem \cite{Mal} (see \cite{ADR2} for a proof in the Schwartz setting), there is a function $g\in\cS(\bR^d)$ such that, for every $\al=(\al',\al'')\in\bN^{d_{\fw_0}}\times\bN^{d_{\fv,\fw_0}}$,
$$
\de^{\al'}_{\xi_{\fw_0}}\de^{\al''}_{\xi_{\fv,\fw_0}}(0,\xi_{\check\fw},\xi_\fv,0)=h_\al(\xi_{\check\fw},\xi_\fv)\ .
$$

We take as $G$ the function in $\cS(N)^K$  such that $\cG G=g_{|\Sigma_\cD}$. We must then prove that $G-F\in\cS_0(N)^K$.

Take a monomial $w_0^\beta$   on $\fw_0$. 
Given an integer $k\ge|\beta|$,   decompose $F$ as in \eqref{expansion} and observe that
$$
\int_{\fw_0}\big(G(v,w_0,u)-F(v,w_0,u)\big)w_0^\beta\,dw_0=\int_{\fw_0}\Big(G-\sum_{[\al]\le k}\frac1{\al!}D^\al F_\al\Big)(v,w_0,u)w_0^\beta\,dw_0\ ,
$$
since the remainder term gives integral 0 by integration by parts.

We set
$$
r_k(\xi)=g(\xi)-\sum_{[\al]\le k}\frac1{\al!}h_\al(\xi_{\check\fw},\xi_\fv)\xi_{\fw_0}^{\al'}\xi_{\fv,\fw_0}^{\al''}
$$
so that
$r_k=\cG\Big(G-\sum_{[\al]\le k}\frac1{\al!}D^\al F_\al\Big)$ on $\Sigma_\cD$.

Then Lemma \ref{xi-partition} gives the pointwise identity on $\Sigma_\cD$
$$
\cG\Big(G-\sum_{[\al]\le k}\frac1{\al!}D^\al F_\al\Big)(\xi)=
\sum_{t\in T,\,j\in\bZ}r_k(\xi)\eta_{t,j}(\xi_{\fw_0})\ .
$$

We claim that $k$ can be chosen large enough so that, undoing the Gelfand transform, the series
$$
\sum_{t\in T,\,j\in\bZ}\Big(G-\sum_{[\al]\le k}\frac1{\al!}D^\al F_\al\Big)*_{\fw_0}\psi_{t,j}
$$
 converges  in the $(\cS(N),|\beta|)$-norm  (in which case it converges to $G-\sum_{[\al]\le k}\frac1{\al!}D^\al F_\al$).

By the continuity of $\cG\inv$, there are $m\in\bN$ and $C>0$, depending on $|\beta|$, such that, for every $t,j,k$,
$$
\Big\|\Big(G-\sum_{[\al]\le k}\frac1{\al!}D^\al F_\al\Big)*_{\fw_0}\psi_{t,j}\Big\|_{\cS(N),|\beta|}\le C\|r_k\eta_{t,j}\|_{\cS(\bR^d),m}\  .
$$

Hence we look for $k$ such that $\|r_k\eta_{t,j}\|_{\cS(\bR^d),m}=O(r^{-|j|})$ for every $t\in T$. We first do so with $r_k$ replaced by the remainder $s_{k'}$ in Taylor's formula,
$$
s_{k'}(\xi)=g(\xi)-\sum_{|\al|\le k'}\frac1{\al!}h_\al(\xi_{\check\fw},\xi_\fv)\xi_{\fw_0}^{\al'}\xi_{\fv,\fw_0}^{\al''}\ .
$$

Then, for $\gamma\in\bN^d$ with $|\gamma|\le m$ and for every $M\in\bN$,
$$
\big|\de^\gamma (s_{k'}\eta_{t,j})(\xi)\big|\le C_{m,M}(1+2^{-\tilde mj})(1+|\xi_\fv|)^{-M}\big(|\xi_{\fw_0}|+|\xi_{\fv,\fw_0}|\big)^{k'+1-m}\ ,
$$
where $\tilde m$ only depends on $m$.

From Lemma~\ref{dominant} we obtain that there are positive exponents $a,b,c$, $a',b',c'$ such that, on  $\Sigma_\cD$,
\begin{equation}\label{dominant'}
\text{for $|\xi|$ small: }\begin{cases}|\xi_{\fw_0}|\lesssim |\xi_\fv|^a\\ |\xi_{\fv,\fw_0}|\lesssim |\xi_\fv|^b|\xi_{\fw_0}|^c\ ,\end{cases} \qquad \text{for $|\xi|$ large: }\begin{cases}|\xi_{\fw_0}|\lesssim |\xi_\fv|^{a'}\\ |\xi_{\fv,\fw_0}|\lesssim |\xi_\fv|^{b'}|\xi_{\fw_0}|^{c'}\ .\end{cases} 
\end{equation}

Since the inequalities of Lemma \ref{dominant} remain valid in a $D(\del)$-invariant neighbourhood of $\Sigma_\cD$, we may  assume that \eqref{dominant'} hold uniformly on the support of each $s_{k'}\eta_{t,j}$.

Since there are $\tau,\tau'>0$ such that, on the support of $\eta_{t,j}$, $|\xi_{\fw_0}|\le r^{\tau j}$ if $j\le0$, and $|\xi_{\fw_0}|\le r^{\tau' j}$ if $j>0$, we can choose $k'$ such that  $\|s_{k'}\eta_{t,j}\|_{\cS(\bR^d),m}=O(r^{-|j|})$.

Finally, we choose $k=\max\{[\al]:|\al|\le k'\}$. Since $k>k'$,
$$
r_k-s_{k'}=\sum_{|\al|> k'\,,\,[\al]\le k}\frac1{\al!}h_\al(\xi_{\check\fw},\xi_\fv)\xi_{\fw_0}^{\al'}\xi_{\fv,\fw_0}^{\al''}\ .
$$

The estimates on $\|(r_k-s_{k'})\eta_{t,j}\|_{\cS(\bR^d),m}$ are basically the same.

Now, $\Big(G-\sum_{[\al]\le k}\frac1{\al!}D^\al F_\al\Big)*_{\fw_0}\psi_{t,j}\in\cS_0(N)^K$, because its $\fw_0$-Fourier transform vanishes for $\zeta\in\fw_0$ close to the origin. 
We now  observe that integration against $w_0^\beta$ is a continuous operation in the $(\cS(N),|\beta|)$-norm.
This concludes the proof of Proposition \ref{F-G}.

\subsection{The main theorem}
\label{sec_pf_main_result}

We can finally prove the main results of the paper.
Theorem~\ref{thm_main} is obtained by combining together the partial results stated in
Propositions \ref{S_0} and \ref{F-G}.
It will be used in Section \ref{sec-application} to prove by a recursive argument that the pairs in Table \eqref{table_block1+2} satisfy property (S).

\begin{theorem}
\label{thm_main}
Let $(N,K)$ be a nilpotent Gelfand pair.
We assume that 
\begin{enumerate}
\item[\rm(1)] all the quotient pairs $(N_t,K_t)$ for $t\in \fz_0\backslash\{0\}$ satisfy property (S);
\item[\rm(2)] the pair $(N,K)$ satisfies hypothesis (H).
\end{enumerate}
Then $(N,K)$  satisfies  property (S).
\end{theorem}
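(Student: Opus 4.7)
The plan is to deduce the theorem immediately from Propositions \ref{S_0} and \ref{F-G}, combined with the general surjectivity result of Theorem \ref{hulanicki}. Since the inclusion $\cS(\Sigma_\cD)\subseteq \cG(\cS(N)^K)$ of \eqref{inclusion} is available for every nilpotent Gelfand pair and $\cG$ is linear and one-to-one, the open mapping theorem for Fr\'echet spaces reduces property (S) to proving the opposite inclusion: every $F\in\cS(N)^K$ has a spherical transform $\cG F$ which extends from $\Sigma_\cD$ to a Schwartz function on~$\bR^d$.

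To verify this last statement, I would fix a Hilbert basis ${\boldsymbol\rho}=({\boldsymbol\rho}_{\fw_0},{\boldsymbol\rho}_{\check\fw},{\boldsymbol\rho}_\fv,{\boldsymbol\rho}_{\fv,\fw_0})$ of the type described in Section~\ref{subsec_hilbert_base}, set $\cD=\la'({\boldsymbol\rho})$ and work with the corresponding realization $\Sigma_\cD\subset\bR^d$; by Proposition \ref{indipendence}, this choice is inessential. Given $F\in\cS(N)^K$, hypothesis~(2) lets me apply Proposition \ref{F-G} to split $F=G+(F-G)$ with $G\in\cS(N)^K$ such that $\cG G$ admits a Schwartz extension on $\bR^d$, and with $F-G\in\cS_0(N)^K$. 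Hypothesis~(1) is exactly the assumption required by Proposition \ref{S_0}, which I would then apply to $F-G$ to obtain a Schwartz extension of $\cG(F-G)$ on~$\bR^d$. Summing the two extensions yields the desired extension of $\cG F=\cG G+\cG(F-G)$.

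The substantive work has been carried out already in Propositions \ref{S_0} and \ref{F-G}, so no real obstacle remains at this level. The only point worth underlining is that both propositions produce extensions relative to the \emph{same} realization $\Sigma_\cD$---they share the Hilbert basis ${\boldsymbol\rho}$ and hence the generating system $\cD$---which makes the addition of the two Schwartz functions legitimate. Once these steps are assembled, the continuous linear bijection $\cG:\cS(N)^K\to\cS(\Sigma_\cD)$ is upgraded to a topological isomorphism of Fr\'echet spaces via the open mapping theorem, giving property (S).
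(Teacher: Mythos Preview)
Your proposal is correct and follows essentially the same approach as the paper: use Proposition~\ref{F-G} (via hypothesis~(H)) to write $F=G+(F-G)$ with $\cG G$ admitting a Schwartz extension and $F-G\in\cS_0(N)^K$, then apply Proposition~\ref{S_0} (via the assumption on quotient pairs) to extend $\cG(F-G)$, and conclude with \eqref{inclusion} and the open mapping theorem. The paper's proof is slightly terser but the logic is identical.
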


\begin{proof}
Given $F\in\cS(N)^K$, let $G$ be as in Proposition \ref{F-G}. By Proposition \ref{S_0}, $\cG(F-G)$ admits a Schwartz extension $h$. Then $g+h$ is a Schwartz extension of $\cG F$. 
Together with \eqref{inclusion}, this shows that $(N,K)$ satisfies property  (S).
\end{proof}

\section{Application to the pairs in Table \eqref{table_block1+2}}
\label{sec-application}

\begin{corollary}\label{cor_thm_main}
Property (S) holds for all the nilpotent Gelfand pairs $(N, K)$
described in Table 
\eqref{table_block1+2}.
\end{corollary}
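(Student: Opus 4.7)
The plan is to apply Theorem \ref{thm_main} recursively, organising the six families in Table \eqref{table_block1+2} together with the pairs already known to satisfy property (S) into an induction on the dimension of $\fn$ (or, equivalently, on the parameter $n$ within each family). The two ingredients needed at each step are hypothesis (H) and property (S) for every quotient pair $(N_t,K_t)$ with $t\in\fw_0\setminus\{0\}$; the first is supplied, uniformly in $n$ and across all six families, by the result of \cite{FRY2}, which establishes hypothesis (H) for every Vinberg pair. Hence the entire burden of the proof shifts to identifying and controlling the quotient pairs.

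The main step is therefore to compute, for each row of Table \eqref{table_block1+2}, the family of stabilisers $K_t$ and the central quotients $\fn_t=\fv\oplus(\fw/\ft_t)$ attached to the $K$-orbits in $\fw_0$, and to verify that each resulting $(N_t,K_t)$ satisfies property (S). I expect to see three qualitative outcomes: (a) $(N_t,K_t)$ is a \emph{rank-one} pair (the orbits of $K_t$ on the centre of $\fn_t$ are full spheres), in which case property (S) is known from \cite{FR, FRY1}; (b) $(N_t,K_t)$ is an \emph{abelian} or \emph{Heisenberg-type} pair, covered by \cite{ADR1, ADR2}; (c) $(N_t,K_t)$ belongs, after applying at most a normal extension (Proposition \ref{normal}), a direct product (Proposition \ref{product}), or a central reduction (Proposition \ref{central}), to the same family as $(N,K)$ but with a strictly smaller value of the induction parameter. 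Case (c) is where the self-contained nature of the table becomes essential: a generic orbit of the conjugation action of $\mathrm{SO}_n$, $\mathrm{U}_n$, $\mathrm{Sp}_n$, $\mathrm{SU}_n$, etc.\ on $\fw$ has stabiliser of the same reductive type but smaller rank, so the quotient pair is of the same shape as a pair earlier in the table (up to the hereditary operations of Section \ref{sec_procedure}).

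Concretely, the verification proceeds row by row. For rows 1--3, the action of $K$ on $\fw$ is by conjugation on $\fs\fo_n$, $\fu_n$, $HS^2_0\bH^n\oplus\IM\bH$, and the generic stabiliser reduces the rank by one, giving a pair of the same type with $n$ replaced by $n-1$ or $n-2$; for small values of $n$ the base of the induction is a rank-one pair. For rows 4--6 the centre contains $\Lambda^2\bC^m$ with $m=2n, 2n+1$; a nonzero $t$ is a skew-symmetric form of some rank $2r$, and its stabiliser in $\mathrm{SU}_m$ or $\mathrm{U}_m$ is essentially $\mathrm{Sp}_r\times\mathrm{U}_{m-2r}$ (possibly intersected with the determinant-one condition), so the quotient pair decomposes, via Propositions \ref{normal}, \ref{product}, \ref{central}, into a pair from the same family with smaller parameter together with pairs that are abelian, Heisenberg-type, or rank-one. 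In all cases, the anchor of the induction is the explicit list in the excerpt of pairs for which property (S) is already known.

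The main obstacle I expect is the case-by-case bookkeeping in step (c) for rows 4--6: one must check that the decomposition of the quotient pair under the hereditary operations really lands in a combination of previously handled pairs and that the restrictions on $n$ in the right-most column of Table \eqref{table_block1+2} are wide enough to start the induction without having to treat exceptional low-dimensional coincidences by hand. Once this verification is completed row by row, Theorem \ref{thm_main} applies and property (S) follows for $(N,K)$; the induction then propagates to the next value of $n$ in the same row, proving the corollary.
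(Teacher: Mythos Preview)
Your proposal follows essentially the same approach as the paper: apply Theorem \ref{thm_main} inductively, invoking \cite{FRY2} for hypothesis (H) and computing the quotient pairs $(N_t,K_t)$ row by row to see that they reduce, via the hereditary operations of Section \ref{sec_procedure}, to pairs of lower rank in the same table or to pairs already covered by \cite{ADR1, ADR2, FR, FRY1}.

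One point where your sketch is inaccurate and would need correction when carried out: the quotient pairs do \emph{not} stay within a single row. For row~1, a generic $t\in\fs\fo_n$ has stabiliser $\mathrm{U}_{p_1}\times\cdots\times\mathrm{U}_{p_k}\times\mathrm{SO}_q$, and $\fn_t$ splits as a product of pairs of type row~2 (the $\mathrm{U}_{p_j}$ factors) together with one factor of type row~1 with smaller $q$; so row~1 feeds into row~2. Likewise for rows~4--6 the stabiliser of a skew form is a product $\mathrm{Sp}_{p_1}\times\cdots\times\mathrm{Sp}_{p_k}\times\mathrm{(S)U}_{m}$, and the $\mathrm{Sp}$ factors produce quotient pairs of type row~3, not of the original type. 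The induction therefore has to be set up across the whole table simultaneously (e.g.\ on $\dim\fn$), not row by row, and the ``self-contained'' claim means precisely that the quotients land in \emph{some} row of the table with smaller parameter, not the same row. Once you make this correction, the plan goes through and matches the paper's argument.
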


\begin{proof} We apply inductively  Theorem \ref{thm_main}
to the pairs
in Table 
\eqref{table_block1+2}.
Indeed, the classification of quotient pairs given below shows that this set of pairs  is essentially self-contained, up to the procedures described in Section \ref{sec_procedure} which guarantee hereditarity of property (S);
the lowest-rank pairs that need to be considered in order to start the induction are the following:
\begin{itemize}
 \item line 1: the trivial pair $(\bR,\{1\})$ and $(H_1,{\rm SO_2})$;
 \item line 2: $(H_1,{\rm U}_1)\cong(H_1,{\rm SO_2})$;
 \item line 3: the quaternionic Heisenberg group with Lie algebra $\bH\oplus\IM\bH$, and with ${\rm Sp}_1$ acting nontrivially only on $\fv=\bH$;
 \item lines 4, 5,  6: $(\bC,\{1\})$, $(H_1,{\rm U}_1)$, and $(\bR,\{1\})$, respectively.
 \end{itemize}
 Since in all these cases property (S) is either trivial or proved in \cite{ADR1},  Part (1) of the hypotheses in Theorem \ref{thm_main} is satisfied recursively. 
Part (2) is also satisfied since either $\check N$ is  abelian, in which case we appeal to Proposition \ref{H-checkNabelian}, or $(\check N, K)$ is a (complex or quaternionic) Heisenberg pair 
for which property (S) has been proven in \cite{ADR1}.
 Hypothesis (H) for all these pairs was proved to be satisfied for certain nilpotent Gelfand pairs in \cite{FRY2}.
This yields
 Corollary \ref{cor_thm_main} modulo verification of the list of quotient pairs.
 \end{proof}

In the rest of this section, we list all the quotient pairs of the pairs in Table \eqref{table_block1+2}, in order to verify that they are products of pairs with factors in the same list or for which property~(S) is known.

Let  $I_p$ (resp $0_p$) be the identity (resp. zero) $p\times p$ matrix and
$$
J_p:={\rm diag}(\underbrace{J,J,\dots,J}_{\mbox{$p$ times}})\ .
$$

The Lie bracket $[\ ,\ ]$ is understood as a map from $\fv\times\fv$ to $\fw$, elements of $\bR^n,\,\bC^n$ etc. as column vectors.

\subsubsection{The pair $(\bR^n\oplus \fs\fo_n,{\rm SO}_n)$}\label{line1}
The Lie bracket is
$$
[v,v']=\half(v\,\trans v'-v'\,\trans v)\ .
$$

Up to conjugation by an element of $K$, we may assume that $t\in \fw_0\backslash\{0\}$ has the form
$$
t={\rm diag}(t_1J_{p_1},\dots,t_kJ_{p_k},0_q)\ ,
$$
with $2p_1+\cdots+2p_k+q=n$ and $t_i\ne t_j\ne0$ for every $i\ne j$. Then we have
$$
\begin{aligned}
K_t&={\rm U}_{p_1}\times\cdots\times {\rm U}_{p_k}\times{\rm SO}_q\ ,\\
\fw_t&=\fu_{p_1}\oplus\cdots\oplus \fu_{p_k}\oplus\fs\fo_q\ ,\\
\fn_t&=(\bC^{p_1}\oplus\fu_{p_1})\oplus\cdots\oplus (\bC^{p_1}\oplus\fu_{p_k})\oplus(\bR^q\oplus\fs\fo_q)\ .
\end{aligned}
$$

\subsubsection{The pair $(\bC^n\oplus \fu_n,{\rm U}_n)$}\label{line2}
The Lie bracket is
$$
[v,v']=\half(v{v'}^*-v'v^*)\ .
$$

Up to conjugation by an element of $K$, we may assume that $t\in \fw_0\backslash\{0\}$ has the form
$$
t={\rm diag}(it_1I_{p_1},\dots,it_kI_{p_k})\ ,
$$
with $p_1+\cdots+p_k=n$ and $t_i\ne t_j$ for every $i\ne j$, 
$\tr t=0$,
and we have
$$
\begin{aligned}
K_t&={\rm U}_{p_1}\times\cdots\times {\rm U}_{p_k}\ ,\\
\fw_t&=\fu_{p_1}\oplus\cdots\oplus \fu_{p_k}\ ,\\
\fn_t&=(\bC^{p_1}\oplus\fu_{p_1})\oplus\cdots\oplus (\bC^{p_1}\oplus\fu_{p_k})\ .
\end{aligned}
$$

\subsubsection{The pair $\big(\bH^n\oplus (HS^2_0\bH^n\oplus\IM\bH),{\rm Sp}_n\big)$}\label{line3}
The Lie bracket is
$$
\begin{aligned}[]
[v,v']&=\half\Big(vi{v'}^*-v'iv^*-\frac1n\tr(vi{v'}^*-v'iv^*)I_n\Big)\oplus\IM(v^*v')\\
&=\Big(\half (vi{v'}^*-v'iv^*)-\frac1n\IM\!_i(v^*v')I_n\Big)\oplus\IM(v^*v')\ ,
\end{aligned}
$$
where $\IM\!_i$ denotes the $i$-component of the argument.

Up to conjugation by an element of $K$, we may assume that $t\in \fw_0\backslash\{0\}$ has the form
$$
t={\rm diag}(t_1I_{p_1},\dots,t_kI_{p_k})\ ,
$$
with $p_1+\cdots+p_k=n$ and $t_i\ne t_j$ for every $i\ne j$, 
and we have
$$
\begin{aligned}
K_t&={\rm Sp}_{p_1}\times\cdots\times {\rm Sp}_{p_k}\ ,\\
\fw_t&=(HS^2\bH^{p_1}\oplus\cdots\oplus HS^2\bH^{p_k})_0\oplus\IM\bH\ .
\end{aligned}
$$

Decomposing $v\in\bH^n$ as $v_1\oplus\cdots\oplus v_k$ with $v_j\in\bH^{p_j}$, the Lie bracket in  $\fn_t$  is 
$$
[v,v']_t=\begin{pmatrix}\half(v_1i{v'}_1^*-v'_1iv_1^*)-\frac1n\IM\!_i(v^*v')I_{p_1}&&\\&\ddots&\\&&v_ki{v'}_k^*-v'_kiv_k^*-\frac1n\IM\!_i(v^*v')I_{p_k}\end{pmatrix}\oplus\IM(v^*v')\ .
$$

If we consider, for $1\le j\le k$, the subalgebra $\fh_j$ of $\fn_t$ generated by $\bH^{p_j}$,
$$
\fh_j=\bH^{p_j}\oplus (HS^2\bH^{p_j}\oplus\IM\bH)=\big(\bH^{p_j}\oplus (HS^2_0\bH^{p_j}\oplus\IM\bH)\big)\oplus\bR\ ,
$$
we easily see that it is $K_t$-invariant and  only the factor ${\rm Sp}_{p_j}$ of $K_t$ acts nontrivially on it.  Since $\fh_j$ commutes with $\fh_{j'}$ for $j\ne j'$, it follows that $\fn_t$ is the quotient, modulo a central ideal, of the product of the $\fh_j$.

We conclude that $(\fn_t,K_t)$ is a central reduction of the product of the pairs $(\fh_j,{\rm Sp}_{p_j})$, where, in turn, each $(\fh_j,{\rm Sp}_{p_j})$ is the product of $\big(\bH^{p_j}\oplus (HS^2_0\bH^{p_j}\oplus\IM\bH),{\rm Sp}_{p_j}\big)$ and the trivial pair $(\bR,\{1\})$.

\subsubsection{The pairs $(\bC^{2n+1}\oplus \Lambda^2\bC^{2n+1},{\rm SU}_{2n+1})$ and $\big(\bC^{2n+1}\oplus (\Lambda^2\bC^{2n+1}\oplus\bR)\big),{\rm U}_{2n+1})$}\label{lines45}
To fix the notation, we consider the second family of pairs, the other being analogous and simpler. The Lie bracket is
\begin{equation}\label{bracketline5}
\begin{aligned}[]
[v,v']&=\half(v\trans v'-v'\trans v)\oplus \IM(v^*v')\ .
\end{aligned}
\end{equation}

Up to conjugation by an element of $K$, we may assume that $t\in \fw_0\backslash\{0\}$ has the form
$$
t={\rm diag}(t_1J_{p_1},\dots,t_kJ_{p_k},0_{2q+1})\ ,
$$
with $p_1+\cdots+p_k+q=n$ and $t_i\in\bR$, $t_i\ne t_j\ne0$ for $i\ne j$, we have
$$
\begin{aligned}
K_t&={\rm Sp}_{p_1}\times\cdots\times {\rm Sp}_{p_k}\times {\rm U}_{2q+1}\\
\fw_t&=HS^2\bH^{p_1}J_{p_1}\oplus\cdots\oplus HS^2\bH^{p_k}J_{p_k}\oplus \Lambda^2\bC^{2q+1}\oplus\bR\ .
\end{aligned}
$$

Like in the previous case, we split $\bC^{2n+1}$ as $\bC^{2p_1}\oplus\cdots\oplus \bC^{2p_k}\oplus\bC^{2q+1}$, and set
$$
\fh_j=\bC^{2p_j}\oplus(HS^2\bH^{p_j}J_{p_j}\oplus\bR)\ ,\quad(j=1,\dots,k)\ ,\qquad \fh_{k+1}=\bC^{2q+1}\oplus  (\Lambda^2\bC^{2q+1}\oplus\bR)\ ,
$$
i.e., the subalgebra generated by the $j$-th summand in $\bC^{2n+1}$. Then, for $1\le j\le k$,
$$
\fh_j\cong \bH^{p_j}\oplus(HS^2_0\bH^{p_j}\oplus\bR^2)\ ,
$$
and $(\fh_j,{\rm Sp}_{p_j})$ is isomorphic to a central reduction of the pair in subsection \ref{line3}. Finally, $(\fn_t,K_t)$ is isomorphic to a central reduction of the product of $k$ pairs of this kind and the pair $(\fh_{k+1},{\rm U}_{2q+1})$.

\subsubsection{The pair $\big(\bC^{2n}\oplus (\Lambda^2\bC^{2n}\oplus\bR),{\rm SU}_{2n}\big)$}\label{line6}
The Lie bracket is given by \eqref{bracketline5}. Any element $w_0$ of $\fw_0\backslash\{0\}$  is conjugate, modulo an element of ${\rm U}_{2n}$, to an element of the form
%of $\fa$ in \eqref{anonpolar}
\begin{equation}\label{diag}
t={\rm diag}(t_1J_{p_1},\dots,t_kJ_{p_k},0_{2q})\ ,
\end{equation}
with $p_1+\cdots+p_k+q=n$ and $t_i\in\bR$,  $t_i\ne t_j$ for $i\ne j$. 

If $q>0$, then $w_0$ and $t$ are also conjugate under ${\rm SU}_{2n}$. If $q=0$, then there exists $e^{i\theta}$, unique up to a $2n$-th root of unity, such that $w_0$ is conjugate to 
$$
t_\theta={\rm diag}(t_1e^{i\theta}J_{p_1},\dots,t_ke^{i\theta}J_{p_k})\ .
$$

Then
$$
K_{e^{i\theta}t}=K_t\ ,\qquad
\fw_{e^{i\theta}t}=e^{i\theta}\fw_t\ .
$$

For an element $t$ as in \eqref{diag}, we have
$$
\begin{aligned}
K_t&={\rm Sp}_{p_1}\times\cdots\times {\rm Sp}_{p_k}\times {\rm SU}_{2q}\\
\fw_t&=\begin{cases}HS^2\bH^{p_1}J_{p_1}\oplus\cdots\oplus HS^2\bH^{p_k}J_{p_k}\oplus \Lambda^2\bC^{2q}\oplus\bR &\text{ if } q\ne0\ ,\\
HS^2\bH^{p_1}J_{p_1}\oplus\cdots\oplus HS^2\bH^{p_k}J_{p_k}
\oplus i\bR\, 
{\rm diag}(  t_1^{-1} J_{p_1}, \ldots, t_k^{-1} J_{p_k})
\oplus\bR &\text{ if } q=0\ .
\end{cases}
\end{aligned}
$$

The discussion proceeds as in subsection \ref{lines45}.

\vskip1cm

\end{document}